\documentclass[11pt,a4paper]{amsart}
\usepackage[utf8]{inputenc}
\usepackage[english]{babel}
\usepackage{amsmath}
\usepackage{amsfonts}
\usepackage{amssymb}
\usepackage{mathabx}
\usepackage{graphicx}
\usepackage{pstricks}
\usepackage{float}
\usepackage{fullpage}
\usepackage{stmaryrd}
\usepackage{hyperref}

\newcommand{\R}{\mathbb R}

\newcommand{\E}{\mathbb E}

\newcommand{\N}{\mathbb N}
\newcommand{\Y}{\mathsf Y}
\newcommand{\F}{\mathcal F}
\newcommand{\D}{\mathcal D}
\newcommand{\M}{\mathcal M}
\newcommand{\I}{\mathcal I}
\newcommand{\G}{\mathcal G}
\newcommand{\cY}{\widecheck{\mathsf{Y}}}
\newcommand{\calL}{\mathcal L}
\newcommand{\1}{\mathbf 1}
\newcommand{\Perm}{\operatorname{Perm}}
\newcommand{\Tr}{\operatorname{Tr}}
\newcommand{\Ren}{\operatorname{Re}}

\newtheorem{thm}{Theorem}[section]
\newtheorem{lemma}[thm]{Lemma}
\newtheorem{defn}[thm]{Definition}
\newtheorem{prop}[thm]{Proposition}

\newtheorem{cor}[thm]{Corollary}

\newtheorem{rmk}[thm]{Remark}
\newtheorem{quest}[thm]{Question}

\theoremstyle{remark}

\numberwithin{equation}{section}

\author{Titus Lupu}
\address {CNRS and LPSM, UMR 8001,
Sorbonne Université,
4 place Jussieu,
75252 Paris cedex 05,
France}
\email
{titus.lupu@upmc.fr}

\title{Isomorphisms of $\beta$-Dyson's Brownian motion with Brownian local time}

\begin{document}

\begin{abstract}
We show that the Brydges-Fröhlich-Spencer-Dynkin and the Le Jan's isomorphisms between the Gaussian free fields and the occupation times of symmetric Markov processes generalize to 
the $\beta$-Dyson's Brownian motion.
For $\beta\in\{1,2,4\}$ this is a consequence of the Gaussian case, however the relation holds for general $\beta$.
We further raise the question whether there is an analogue of 
$\beta$-Dyson's Brownian motion on general electrical networks, interpolating and extrapolating the fields of eigenvalues in 
matrix-valued Gaussian free fields. 
In the case $n=2$ we give a simple construction.
\end{abstract}

%\subjclass[2010]{
%primary: 60B20; 60C05; 60E07; 60G55; 60G60; 81T18; secondary: 60G10; 60G15; 60J27; 60J55}
\keywords{Dyson's Brownian motion, Gaussian beta ensembles, Gaussian free field, isomorphism theorems, local time, permanental fields, topological expansion
}

\maketitle

\section{Introduction}

There is a class of results, known as \textit{isomorphism theorems},
relating the squares of Gaussian free fields (GFFs) to occupation times of symmetric Markov processes.
They originate from the works in mathematical physics
\cite{Symanzik1969QFT,BFS82Loop}. 
For a review, see \cite{MarcusRosen2006MarkovGaussianLocTime,Sznitman2012LectureIso}.
Here in particular we will be interested in
the Brydges-Fröhlich-Spencer-Dynkin isomorphism
\cite{BFS82Loop,Dynkin1984Isomorphism,Dynkin1984IsomorphismPresentation}
and in the Le Jan's  isomorphism
\cite{LeJan2010LoopsRenorm,LeJan2011Loops}.
The BFS-Dynkin isomorphism involves Markovian paths with fixed ends.
Le Jan's  isomorphism involves a Poisson point process of Markovian loops, with an intensity parameter $\alpha=1/2$ in the case of real scalar GFFs. For vector-valued GFFs with $d$ components,
the intensity parameter is $\alpha= d/2$.
We show that both Le Jan's and BFS-Dynkin isomorphisms have
a generalization to $\beta$-Dyson's Brownian motion,
and provide identities relating the latter to local times of
one-dimensional Brownian motions.
By doing so, we go beyond the Gaussian setting.

For $\beta\in\{1,2,4\}$, a $\beta$-Dyson's Brownian motion
is the diffusion of eigenvalues in a Brownian motion on the space of
real symmetric $(\beta=1)$, complex Hermitian $(\beta=2)$, respectively quaternionic Hermitian $(\beta=4)$ matrices. 
Yet, the $\beta$-Dyson's Brownian motion is defined for every $\beta\geq 0$.
The one-dimensional marginals of $\beta$-Dyson's Brownian motion are
Gaussian beta ensembles G$\beta$E.
The generalization of Le Jan's and BFS-Dynkin isomorphisms
works for every $\beta\geq 0$,
and for $\beta\in\{1,2,4\}$ it follows from the Gaussian case.
The intensity parameter $\alpha$ appearing in the Le Jan's
type isomorphism is given by
\begin{displaymath}
2\alpha =d(\beta,n)= n +n(n-1)\dfrac{\beta}{2},
\end{displaymath}
where $n$ is the number of "eigenvalues".
In particular, $\alpha$ takes not only half-integer values, as in the Gaussian case, but a whole half-line of values.
The BFS-Dynkin type isomorphism involves polynomials defined by a recurrence with a structure similar to that of the
Schwinger-Dyson equation for G$\beta$E.
These polynomials also give the symmetric moments of the 
$\beta$-Dyson's Brownian motion.

We further ask the question whether an analogue of 
G$\beta$E and $\beta$-Dyson's Brownian motion
could exist on electrical networks and interpolate and extrapolate the distributions of the eigenvalues in matrix-valued GFFs.
Our motivation for this is that such analogues could be related to
Poisson point process of random walk loops, 
in particular to those of non half-integer intensity parameter.
If the underlying graph is a tree, the construction of such analogues is straightforward, by taking $\beta$-Dyson's Brownian motions along each branch of the tree. 
However, if the graph contains cycles, this is not immediate, and one does not expect a Markov property for the obtained fields.
However, in the simplest case $n=2$, 
we provide a construction working on any graph.

Our article is organized as follows.
In Section \ref{Sec Iso 1D} we recall the BFS-Dynkin and the Le Jan's
isomorphisms in the particular case of 1D Brownian motion.
In Section \ref{Sec GbE} we recall the definition of
Gaussian beta ensembles and the corresponding Schwinger-Dyson equation.
Section \ref{Sec b Dyson} deals with
$\beta$-Dyson's Brownian motion and the corresponding 
isomorphisms.
Section \ref{Sec Graph} deals with general electrical networks.
We give our construction for $n=2$
and ask our questions for $n\geq 3$.

\section{Isomorphism theorems for 1D Brownian motion}
\label{Sec Iso 1D}

Let $(B_{t})_{t\geq 0}$ be the standard Brownian motion on $\R$.
$L^{x}$ will denote the Brownian local times:
\begin{displaymath}
L^{x}((B_{s})_{0\leq s\leq t})
=\lim_{\varepsilon\to 0}
\dfrac{1}{2\epsilon}
\int_{0}^{t}\1_{\vert B_{s}-x\vert<\varepsilon} ds.
\end{displaymath}
We will denote by $p(t,x,y)$ the heat kernel on $\R$,
and by $p_{\R_{+}}(t,x,y)$ the heat kernel on $\R_{+}$
with condition $0$ in $0$:
\begin{displaymath}
p(t,x,y)=\dfrac{1}{\sqrt{2\pi t}}
e^{-\frac{(y-x)^{2}}{2t}},
\qquad
p_{\R_{+}}(t,x,y)
= p(t,x,y) - p(t,x,-y).
\end{displaymath}
We will denote by $\mathbb{P}^{t,x,y}(\cdot)$
the Brownian bridge probability from $x$ to $y$ in 
time $t$, 
and by $\mathbb{P}_{\R_{+}}^{t,x,y}(\cdot)$ (for $x,y>0$) the
probability measures where one conditions $\mathbb{P}^{t,x,y}(\cdot)$
on that the bridge does not hit $0$.
Let $(G_{\R_{+}}(x,y))_{x,y\geq 0}$ be the Green's function
of $\frac{1}{2}\frac{d^{2}}{dx^{2}}$ on
$\R_{+}$ with $0$ condition in $0$,
and for $K>0$, 
$(G_{K}(x,y))_{x,y\geq 0}$ the Green's function
of $\frac{1}{2}\frac{d^{2}}{dx^{2}}-K$ on $\R$:
\begin{eqnarray*}
G_{\R_{+}}(x,y) &=& 2 x\wedge y
= \int_{0}^{+\infty} p_{\R_{+}}(t,x,y) dt,
\\
G_{K}(x,y) &=& \dfrac{1}{\sqrt{2 K}} e^{-\sqrt{2 K}\vert y-x\vert}
= \int_{0}^{+\infty} p(t,x,y)e^{-Kt} dt.
\end{eqnarray*}

Let $(\mu_{\R_{+}}^{x,y})_{x,y > 0}$,
resp. $(\mu_{K}^{x,y})_{x,y\in\R}$
be the following measures on finite-duration paths:
\begin{equation}
\label{Eq paths 1D}
\mu_{\R_{+}}^{x,y}(\cdot): = 
\int_{0}^{+\infty}\mathbb{P}_{\R_{+}}^{t,x,y}(\cdot)
p_{\R_{+}}(t,x,y) dt,
\qquad
\mu_{K}^{x,y}(\cdot): = 
\int_{0}^{+\infty}\mathbb{P}^{t,x,y}(\cdot)
p(t,x,y) e^{-Kt} dt.
\end{equation}
The total mass of $\mu_{\R_{+}}^{x,y}$,
resp. $\mu_{K}^{x,y}$, 
is $G_{\R_{+}}(x,y)$, resp. $G_{K}(x,y)$.
The image of $\mu_{\R_{+}}^{x,y}$, resp. $\mu_{K}^{x,y}$,
by time reversal is 
$\mu_{\R_{+}}^{y,x}$, resp. $\mu_{K}^{y,x}$.

Let $T_{x}$ denote the first hitting time of a level $x$ by the Brownian motion $(B_{t})_{t\geq 0}$. 
We will denote by $\gamma$ a generic path on $\R$.
Let $(\check{\mu}^{x,y}(\cdot))_{x< y\in\R}$,
resp. $(\check{\mu}_{K}^{x,y}(\cdot))_{x< y\in\R}$
be the following measures on paths from $x$ to $y$:
\begin{displaymath}
\check{\mu}^{x,y}(F(\gamma))
=\mathbb{E}_{B_{0}=y}[F((B_{T_{x}-t})_{0\leq t\leq T_{x}})],
\qquad
\check{\mu}_{K}^{x,y}(F(\gamma))
=\mathbb{E}_{B_{0}=y}\big[e^{-K T_{x}}
F((B_{T_{x}-t})_{0\leq t\leq T_{x}})\big].
\end{displaymath}
The measure $\check{\mu}^{x,y}$ has total mass $1$ (probability measure),
whereas the total mass of 
$\check{\mu}_{K}^{x,y}$ is
\begin{displaymath}
\mathbb{E}_{B_{0}=y}\big[e^{-K T_{x}}\big] = 
e^{-\sqrt{2 K}\vert y-x\vert}
=\dfrac{G_{K}(x,y)}{G_{K}(x,x)}.
\end{displaymath}
For $0<x\leq y<z$, the measure $\mu_{\R_{+}}^{x,z}$
can be obtained as the image of the product measure
$\mu_{\R_{+}}^{x,y}\otimes\check{\mu}^{y,z}$
under the concatenation of two paths.
Similarly, for $x\leq y<z\in\R$,
the measure $\mu_{K}^{x,z}$ is the image of
$\mu_{K}^{x,y}\otimes\check{\mu}_{K}^{y,z}$
under the concatenation of two paths.

Let $(W(x))_{x\in R}$ denote a two-sided Brownian motion, 
i.e. $(W(x))_{x\geq 0}$ and $(W(-x))_{x\geq 0}$
being two independent standard Brownian motions starting from $0$
($W(0)=0$).
Note that here $x$ is rather a one-dimensional space variable
then a time variable.
The derivative $dW(x)$ is a white noise on $\R$.
Let $(\phi_{\R_{+}}(x))_{x\geq 0}$ denote the process
$(\sqrt{2}W(x))_{x\geq 0}$. 
The covariance function of $\phi_{\R_{+}}$ is $G_{\R_{+}}$.
Let $(\phi_{K}(x))_{x\in \R}$ be the stationary
Ornstein–Uhlenbeck process with invariant measure
$\mathcal{N}(0,1/\sqrt{2K})$. 
It is a solution to the SDE
\begin{displaymath}
d\phi_{K}(x) = \sqrt{2} dW(x) - \sqrt{2K} \phi_{K}(x) dx.
\end{displaymath}
The covariance function of $\phi_{K}$ is $G_{K}$.

What follows is the BFS-Dynkin isomorphism
(Theorem 2.2 in \cite{BFS82Loop}, 
Theorems 6.1 and 6.2 in \cite{Dynkin1984Isomorphism},
Theorem 1 in \cite{Dynkin1984IsomorphismPresentation})
in the particular case of a 1D Brownian motion.
In general, the BFS-Dynkin isomorphism relates the
squares of Gaussian free fields to local times of
symmetric Markov processes.

\begin{thm}[Brydges-Fröhlich-Spencer \cite{BFS82Loop},
Dynkin
\cite{Dynkin1984Isomorphism,Dynkin1984IsomorphismPresentation}]
\label{ThmIsoDynkin}
Let $F$ be a bounded measurable functional on
$\mathcal{C}(\R_{+})$, resp. on $\mathcal{C}(\R)$.
Let $k\geq 1$ and
$x_{1},x_{2},\dots,x_{2k}$
in $(0,+\infty)$, resp. in $\mathbb{R}$. 
Then
\begin{displaymath}
\mathbb{E}\Big[
\prod_{i=1}^{2k}\phi_{\R_{+}}(x_{i})
F(\phi_{\R_{+}}^{2}/2)
\Big]=
\sum_{\substack{(\{a_{i},b_{i}\})_{1\leq i\leq k}\\
\text{partition in pairs}
\\
\text{of } \{1,2,\dots, 2k\}
}} 
\int\displaylimits_{\gamma_{1},\dots ,\gamma_{k}}
\E\Big[
F(\phi_{\R_{+}}^{2}/2+L(\gamma_{1})+\dots + L(\gamma_{k}))
\Big]
\prod_{i=1}^{k}
\mu^{x_{a_{i}},x_{b_{i}}}_{\R_{+}}
(d\gamma_{i}),
\end{displaymath}
resp.
\begin{displaymath}
\mathbb{E}\Big[
\prod_{i=1}^{2k}\phi_{K}(x_{i})
F(\phi_{K}^{2}/2)
\Big]=
\sum_{\substack{(\{a_{i},b_{i}\})_{1\leq i\leq k}\\
\text{partition in pairs}
\\
\text{of } \{1,2,\dots, 2k\}
}} 
\int\displaylimits_{\gamma_{1},\dots ,\gamma_{k}}
\E\Big[
F(\phi_{K}^{2}/2+L(\gamma_{1})+\dots + L(\gamma_{k}))
\Big]
\prod_{i=1}^{k}
\mu^{x_{a_{i}},x_{b_{i}}}_{K}
(d\gamma_{i}),
\end{displaymath}
where the sum runs over the
$(2k)!/(2^{k}k!)$ partitions in pairs,
the $\gamma_{i}$-s are Brownian paths
and the $L(\gamma_{i})$-s are the corresponding occupation fields
$x\mapsto L^{x}(\gamma_{i})$.
\end{thm}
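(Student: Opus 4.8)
The plan is to establish the identity first for the multiplicative family of test functionals $F(\ell)=\exp\!\big(-\int\chi(x)\,\ell(x)\,dx\big)$, with $\chi\geq 0$ continuous and compactly supported (in $(0,+\infty)$, resp.\ in $\R$), and then to upgrade to all bounded measurable $F$. This works because, for such an $F$, both sides of the asserted equality reduce to the \emph{same} Gaussian/Feynman--Kac quantity. The upgrade step is a monotone-class argument: the occupation field of any Brownian path of a.s.\ finite duration is a.s.\ a continuous, compactly supported function and $\phi_{\R_{+}}^{2}/2$ (resp.\ $\phi_{K}^{2}/2$) is a.s.\ continuous, so both sides of the theorem, read as functionals of $F$, are integrals of $F$ against finite Borel measures on $\mathcal{C}(\R_{+})$ (resp.\ $\mathcal{C}(\R)$), signed on the left and positive on the right; and the exponentials above form a multiplicative class containing the constants and generating the Borel $\sigma$-algebra, which forces the two measures to coincide. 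On the half-line one keeps $\chi$ compactly supported so that $(\chi,\phi_{\R_{+}}^{2})<\infty$ a.s., since $G_{\R_{+}}(x,x)=2x\to\infty$.

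Fix such a $\chi$. On the path side I would invoke Feynman--Kac. By the occupation-times formula $(L(\gamma),\chi)=\int\chi(x)L^{x}(\gamma)\,dx=\int_{0}^{\tau(\gamma)}\chi(\gamma_{s})\,ds$, where $\tau(\gamma)$ is the duration, and by the representation \eqref{Eq paths 1D} of $\mu^{x,y}_{K}$ as the Brownian bridge from $x$ to $y$ mixed over durations with weight $p(t,x,y)e^{-Kt}\,dt$,
\begin{displaymath}
\int\exp\!\big(-(L(\gamma),\chi)\big)\,\mu^{x,y}_{K}(d\gamma)
=\int_{0}^{+\infty}\E^{t,x,y}\!\big[e^{-\int_{0}^{t}\chi(B_{s})\,ds}\big]\,p(t,x,y)\,e^{-Kt}\,dt
=G^{\chi}_{K}(x,y),
\end{displaymath}
where $\E^{t,x,y}$ is the bridge expectation and $G^{\chi}_{K}$ is the Green's function of $\tfrac12\tfrac{d^{2}}{dx^{2}}-K-\chi$ on $\R$; likewise $\int\exp(-(L(\gamma),\chi))\,\mu^{x,y}_{\R_{+}}(d\gamma)=G^{\chi}_{\R_{+}}(x,y)$, the Green's function of $\tfrac12\tfrac{d^{2}}{dx^{2}}-\chi$ on $\R_{+}$ with Dirichlet condition at $0$; both are finite since $\chi\geq 0$ gives $0\leq G^{\chi}\leq G$. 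As the $\gamma_{i}$ are independent of each other and of $\phi$, and $\exp\!\big(-(\phi^{2}/2+\textstyle\sum_{i}L(\gamma_{i}),\chi)\big)$ factors, Fubini turns the right-hand side of the theorem into $Z^{\chi}\sum_{\text{pairings}}\prod_{i=1}^{k}G^{\chi}(x_{a_{i}},x_{b_{i}})$, with $Z^{\chi}:=\E\big[e^{-\frac12(\chi,\phi^{2})}\big]$.

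On the field side I would use the classical fact that the weight $e^{-\frac12(\chi,\phi^{2})}$ reweights the centered Gaussian field $\phi$ of covariance operator $G$ (the Green's function $G_{\R_{+}}$, resp.\ $G_{K}$) into $Z^{\chi}$ times a centered Gaussian field of covariance operator $(G^{-1}+\chi)^{-1}=G^{\chi}$ --- exactly the perturbed Green's function found on the path side. This is checked on the finite-dimensional marginals $(\phi(y_{1}),\dots,\phi(y_{m}))$, where the reweighting is an honest finite-dimensional Gaussian tilt, and then passed to the limit. Wick's theorem now gives $\E\big[\prod_{i=1}^{2k}\phi(x_{i})\,e^{-\frac12(\chi,\phi^{2})}\big]=Z^{\chi}\sum_{\text{pairings}}\prod_{i=1}^{k}G^{\chi}(x_{a_{i}},x_{b_{i}})$, which matches the previous paragraph; letting $\chi$ range over the measure-determining class completes the proof.

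The main obstacle is not any of these computations --- Feynman--Kac and Wick's theorem are routine here --- but the soft bookkeeping: verifying that the exponential functionals genuinely determine the relevant (signed) laws on $\mathcal{C}(\R_{+})$, handling the growth of $G_{\R_{+}}(x,x)$ at infinity (which is precisely why $\chi$, and the occupation fields of the paths, must be compactly supported for the Laplace argument to be self-consistent), and justifying the tilt-covariance identity in infinite dimensions by discretization. A more hands-on but heavier alternative is induction on $k$ via Gaussian integration by parts: expanding $\E[\phi(x_{1})\prod_{i\geq 2}\phi(x_{i})F(\phi^{2}/2)]$ by the Cameron--Martin/IBP formula produces, for each possible partner $x_{j}$ of $x_{1}$, a factor $G(x_{1},x_{j})$ together with the insertion of the occupation field $L(\gamma)$ of a path $\gamma\sim\mu^{x_{1},x_{j}}$, at the cost of first taking $F$ smooth in finitely many linear functionals and then approximating; I would present the Laplace-transform proof as the main argument and leave this as a remark.
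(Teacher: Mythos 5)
The paper does not prove Theorem \ref{ThmIsoDynkin}: it is recalled as a classical result and attributed to \cite{BFS82Loop,Dynkin1984Isomorphism,Dynkin1984IsomorphismPresentation}, so there is no internal proof to compare against. Your argument is correct and is essentially the standard Laplace-transform proof found in the cited literature and in the survey references (Marcus--Rosen, Sznitman): reduce to $F(\ell)=\exp(-\int\chi\ell)$, identify $\int e^{-(L(\gamma),\chi)}\mu^{x,y}(d\gamma)$ with the perturbed Green's function $G^{\chi}(x,y)$ via Feynman--Kac, match this against the Gaussian tilt $e^{-\frac{1}{2}(\chi,\phi^{2})}$ which turns the covariance $G$ into $G^{\chi}=(G^{-1}+\chi)^{-1}$, apply Wick's theorem, and conclude by a multiplicative-class argument. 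Your bookkeeping points (compact support of $\chi$ on $\R_{+}$ because $G_{\R_{+}}(x,x)=2x$ is unbounded, finiteness of the signed measure on the left via $\E[\prod_{i}|\phi(x_{i})|]<\infty$, and the fact that the exponential functionals generate the Borel $\sigma$-algebra of $\mathcal{C}$) are exactly the right ones to check, so the proof stands as written.
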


\begin{rmk}
\label{Rmk chech}
Since for $x<y$, the measure
$\mu_{\R_{+}}^{x,y}$, resp. $\mu_{K}^{x,y}$,
can be decomposed as
$\mu_{\R_{+}}^{x,x}\otimes\check{\mu}^{x,y}$,
resp.
$\mu_{K}^{x,x}\otimes\check{\mu}_{K}^{x,y}$,
Theorem \ref{ThmIsoDynkin} can be rewritten using only the 
measures of type $\mu_{\R_{+}}^{x,x}$ and $\check{\mu}^{x,y}$,
resp.
$\mu_{K}^{x,x}$ and $\check{\mu}_{K}^{x,y}$.
\end{rmk}

\bigskip

To a wide class of symmetric Markov processes one can associate in a natural way an infinite, $\sigma$-finite measure on loops
\cite{LawlerWerner04BMLoopSoup,LawlerFerreras07RWLoopSoup,
LawlerLimic2010RW,LeJan2010LoopsRenorm,LeJan2011Loops,
LeJanMarcusRosen12Loops,FitzsimmonsRosen14LoopSoup}.
It originated from the works in mathematical physics
\cite{Symanzik65Scalar,Symanzik66Scalar,Symanzik1969QFT,BFS82Loop}.
Here we recall it in the setting of a 1D Brownian motion,
which has been studied in \cite{Lupu20131dimLoops}.
The range of a loop will be just a segment on the line,
but it will carry a non-trivial Brownian local time process
which will be of interest for us.

Given a Brownian loop $\gamma$, $T(\gamma)$ will denote its duration.
The measures on (rooted) loops are
\begin{equation}
\label{Eq loop 1D}
\mu_{\R_{+}}^{\rm loop}(d\gamma):=
\dfrac{1}{T(\gamma)}\int_{\R_{+}}
\mu^{x,x}_{\R_{+}}(d\gamma) dx,
\qquad
\mu_{K}^{\rm loop}(d\gamma)=
\dfrac{1}{T(\gamma)}\int_{\R}
\mu^{x,x}_{K}(d\gamma) dx.
\end{equation}
Usually one considers unrooted loops, 
but this will not be important here.
The 1D Brownian \textit{loop soups} are the Poisson point processes,
denoted $\calL^{\alpha}_{\R_{+}}$, 
resp. $\calL^{\alpha}_{K}$,
of intensity $\alpha\mu_{\R_{+}}^{\rm loop}$,
resp. $\alpha\mu_{K}^{\rm loop}$,
where $\alpha>0$ is an intensity parameter.
$L(\calL^{\alpha}_{\R_{+}})$, resp.
$L(\calL^{\alpha}_{K})$, will denote the occupation field of
$\calL^{\alpha}_{\R_{+}}$, resp. $\calL^{\alpha}_{K}$:
\begin{displaymath}
L^{x}(\calL^{\alpha}_{\R_{+}}):=
\sum_{\gamma\in\calL^{\alpha}_{\R_{+}}}
L^{x}(\gamma),
\qquad
L^{x}(\calL^{\alpha}_{K}):=
\sum_{\gamma\in\calL^{\alpha}_{K}}
L^{x}(\gamma).
\end{displaymath}

The following statement deals with the law of
$L(\calL^{\alpha}_{\R_{+}})$, resp. $L(\calL^{\alpha}_{K})$.
See Proposition 4.6, Property 4.11 and Corollary 5.5 in \cite{Lupu20131dimLoops}.
For the analogous statements in discrete space setting,
see Corollary 5, Proposition 6, Theorem 13 in \cite{LeJan2010LoopsRenorm} 
and Corollary 1, Section 4.1, Proposition 16, Section 4.2,
Theorem 2, Section 5.1 in \cite{LeJan2011Loops}.
In general, one gets $\alpha$-permanental fields
(see also \cite{LeJanMarcusRosen12Loops,FitzsimmonsRosen14LoopSoup}).
For $\alpha=\frac{1}{2}$ in particular, one gets square Gaussians.
We recall that given a matrix
$M=(M_{ij})_{1\leq i,j\leq k}$, its $\alpha$-permanent is
\begin{equation}
\label{Eq Perm}
\Perm_{\alpha}(M):=
\sum_{\substack{\sigma \text{ permutation}
\\ \text{of } \{1,2,\dots ,k\}}}
\alpha^{\# \text{ cycles of } \sigma}
\prod_{i=1}^{k} M_{i \sigma(i)}.
\end{equation}

\begin{thm}[Le Jan \cite{LeJan2010LoopsRenorm,LeJan2011Loops},
Lupu \cite{Lupu20131dimLoops}]
\label{Thm loop soup}
For every $\alpha>0$ and
$x\in\R_{+}$, resp. $x\in \R$, the r.v.
$L^{x}(\calL^{\alpha}_{\R_{+}})$, resp. $L^{x}(\calL^{\alpha}_{K})$,
follows the distribution
$\operatorname{Gamma}(\alpha,G_{\R_{+}}(x,x)^{-1})$,
resp.
$\operatorname{Gamma}(\alpha,G_{K}(x,x)^{-1})$.
Moreover, the process
$\alpha\mapsto L^{x}(\calL^{\alpha}_{\R_{+}})$,
resp. $L^{x}(\calL^{\alpha}_{K})$, is a pure jump Gamma subordinator
with Lévy measure
\begin{displaymath}
\1_{l>0} \dfrac{e^{-l/G_{\R_{+}}(x,x)}}{l} dl,
\qquad
\text{resp. }
\1_{l>0} \dfrac{e^{-l/G_{K}(x,x)}}{l} dl.
\end{displaymath}
Let $x_{1},x_{2},\dots, x_{k}\in \R_{+}$,
resp. $\R$. 
Then
\begin{displaymath}
\E\Big[\prod_{i=1}^{k}L^{x_{i}}(\calL^{\alpha}_{\R_{+}})\Big] =
\Perm_{\alpha}\big(G_{\R_{+}}(x_{i},x_{j})_{1\leq i,j\leq k}\big),
~~
\E\Big[\prod_{i=1}^{k}L^{x_{i}}(\calL^{\alpha}_{K})\Big] =
\Perm_{\alpha}\big(G_{K}(x_{i},x_{j})_{1\leq i,j\leq k}\big)
.
\end{displaymath}
For $x\geq 0$, $x\mapsto L^{x}(\calL^{\alpha}_{\R_{+}})$ is a
solution to the SDE
\begin{displaymath}
dL^{x}(\calL^{\alpha}_{\R_{+}}) = 
2\big(L^{x}(\calL^{\alpha}_{\R_{+}})\big)^{\frac{1}{2}}
dW(x)+ 2\alpha dx,
\end{displaymath}
with initial condition $L^{0}(\calL^{\alpha}_{\R_{+}})=0$.
That is to say it is a square Bessel process of dimension
$2\alpha$,
reflected at level $0$ for $\alpha <1$.
For $x\in\mathbb{R}$, 
$x\mapsto L^{x}(\calL^{\alpha}_{K})$ is a stationary solution to the
SDE
\begin{displaymath}
dL^{x}(\calL^{\alpha}_{K}) = 
2\big(L^{x}(\calL^{\alpha}_{K})\big)^{\frac{1}{2}}
dW(x) -2\sqrt{2K}L^{x}(\calL^{\alpha}_{K}) + 2\alpha dx.
\end{displaymath}
In particular, for $\alpha=\frac{1}{2}$,
one has the following identities in law between stochastic processes:
\begin{equation}
\label{Iso Le Jan 1D}
L(\calL^{\alpha}_{\R_{+}})\stackrel{(\text{ law })}{=}
\dfrac{1}{2}\phi_{\R_{+}}^{2},
\qquad
L(\calL^{\alpha}_{K})\stackrel{(\text{ law })}{=}
\dfrac{1}{2}\phi_{K}^{2}.
\end{equation}
\end{thm}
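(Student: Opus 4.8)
The plan is to reduce every assertion to one master identity for the Laplace functional of the occupation field, and then to read off from it the one-dimensional laws, the permanental moments, the subordinator structure in $\alpha$, and finally the SDE description. I will write the argument for the $\R_{+}$ case; the $K$ case is word for word the same after replacing $G_{\R_{+}}$ by $G_{K}$ and $\mu_{\R_{+}}^{\rm loop}$ by $\mu_{K}^{\rm loop}$.

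First I would establish, for bounded nonnegative $V$ with compact support, the identity
\begin{displaymath}
\E\Big[\exp\Big(-\int V(x)\,L^{x}(\calL^{\alpha}_{\R_{+}})\,dx\Big)\Big]
=\exp\Big(-\alpha\int\big(1-e^{-\int V(x)L^{x}(\gamma)dx}\big)\,\mu_{\R_{+}}^{\rm loop}(d\gamma)\Big)
=\det\big(I+G_{\R_{+}}V\big)^{-\alpha},
\end{displaymath}
where the first equality is the exponential formula for Poisson point processes and the second is obtained by expanding $1-e^{-\int VL(\gamma)}$, applying the Markov property of the bridge measures $\mu_{\R_{+}}^{x,x}$, and absorbing the factor $1/T(\gamma)$ in $\mu_{\R_{+}}^{\rm loop}$ by letting the root run over the marked times; concretely this produces the loop moments
\begin{displaymath}
\int L^{x_{1}}(\gamma)\cdots L^{x_{j}}(\gamma)\,\mu_{\R_{+}}^{\rm loop}(d\gamma)
=\sum_{\text{cyclic orders }(a_{1},\dots,a_{j})\text{ of }\{1,\dots,j\}}\;\prod_{m=1}^{j}G_{\R_{+}}(x_{a_{m}},x_{a_{m+1}}),
\end{displaymath}
indices read modulo $j$, from which the Fredholm determinant appears after resummation. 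The one point needing care is that in the end I want $V=\sum_{i}\lambda_{i}\delta_{x_{i}}$, which is singular; in dimension one this is harmless, since $L^{x}(\gamma)$ is a genuine continuous process, and one passes to the limit from smooth approximations of the Dirac masses, the Fredholm determinant degenerating to the finite determinant $\det\big(\delta_{ij}+\lambda_{i}G_{\R_{+}}(x_{i},x_{j})\big)_{1\le i,j\le k}$.

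Granting this, the one-dimensional marginals follow from $V=\lambda\delta_{x}$: the Laplace transform $(1+\lambda G_{\R_{+}}(x,x))^{-\alpha}$ is that of $\operatorname{Gamma}(\alpha,G_{\R_{+}}(x,x)^{-1})$. The superposition property $\calL^{\alpha+\alpha'}_{\R_{+}}\stackrel{d}{=}\calL^{\alpha}_{\R_{+}}\sqcup\calL^{\alpha'}_{\R_{+}}$ with the two soups independent shows that $\alpha\mapsto L^{x}(\calL^{\alpha}_{\R_{+}})$ has independent stationary increments, hence is a subordinator; its Lévy measure is the image of $\mu_{\R_{+}}^{\rm loop}$ under $\gamma\mapsto L^{x}(\gamma)$, and since its Laplace exponent is $\log(1+\lambda G_{\R_{+}}(x,x))$, a Frullani integral identifies it as $\1_{l>0}\,e^{-l/G_{\R_{+}}(x,x)}l^{-1}\,dl$. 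For the permanental moments I would expand $\det\big(\delta_{ij}+\lambda_{i}G_{\R_{+}}(x_{i},x_{j})\big)^{-\alpha}$ (equivalently differentiate the master identity in the weights at $0$): this is the moment–cumulant computation, a set partition of $\llbracket 1,k\rrbracket$ equipped with a cyclic order on each block being a permutation $\sigma$ whose number of cycles equals the number of blocks, so that each block contributes one factor $\alpha$ and one cyclic product of Green's functions, and the total is $\sum_{\sigma}\alpha^{\#\text{cycles}(\sigma)}\prod_{i}G_{\R_{+}}(x_{i},x_{\sigma(i)})=\Perm_{\alpha}\big(G_{\R_{+}}(x_{i},x_{j})\big)$.

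It remains to identify $x\mapsto L^{x}(\calL^{\alpha}_{\R_{+}})$ with the squared Bessel process of dimension $2\alpha$. I would argue that $(L^{y}(\calL^{\alpha}_{\R_{+}}))_{y\ge 0}$ is a Markov process in $y$ — a feature of one-dimensional loop-soup occupation fields, coming from the fact that conditionally on $L^{x}$ the pieces of $\calL^{\alpha}_{\R_{+}}$ lying in $[0,x]$ and in $[x,\infty)$ decouple — and then an excursion argument at the level $y$ identifies its generator with that of $\mathrm{BESQ}^{2\alpha}$, the condition $L^{0}=0$ coming from the killing at $0$. Alternatively, and more in the spirit of the above, one checks directly that $\det(I+G_{\R_{+}}V)^{-\alpha}$, for $V$ supported in $[0,x]$, matches the Pitman–Yor Laplace functional of $\mathrm{BESQ}^{2\alpha}$ started from $0$, using that $G_{\R_{+}}$ is the inverse of $-\tfrac12\frac{d^{2}}{dx^{2}}$ with Dirichlet condition at $0$; uniqueness in law for the squared Bessel SDE then concludes. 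The stationary $K$ case is the same with $G_{K}$, the inverse of $-\tfrac12\frac{d^{2}}{dx^{2}}+K$ on $\R$, which produces the Ornstein–Uhlenbeck type drift $-2\sqrt{2K}\,L^{x}$, stationarity following from the translation invariance of $\mu_{K}^{\rm loop}$. Finally, at $\alpha=\tfrac12$ one recognizes the classical processes: $\tfrac12\phi_{\R_{+}}^{2}=W^{2}$ satisfies $d(W_{x}^{2})=2\sqrt{W_{x}^{2}}\,d\widetilde{W}_{x}+dx$ by Itô, i.e. it is $\mathrm{BESQ}^{1}$ from $0$ with $2\alpha=1$, and $\tfrac12\phi_{K}^{2}$ is the square of a stationary Ornstein–Uhlenbeck process, the unique in law stationary solution of the stated SDE with $2\alpha=1$; equivalently $\tfrac12\phi^{2}$ has Laplace functional $\det(I+GV)^{-1/2}$, the master identity at $\alpha=\tfrac12$. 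The main obstacle throughout is the passage through singular (Dirac) test functions in the Fredholm determinant and, for the last part, making the spatial Markov property and the Pitman–Yor matching rigorous — both routine in dimension one but needing some care with excursion theory and with the approximation of Dirac masses.
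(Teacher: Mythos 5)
This theorem is not proved in the paper: it is recalled from the cited references (Le Jan's discrete-space results and Proposition 4.6, Property 4.11, Corollary 5.5 of Lupu's 1D paper), so there is no internal proof to compare against. Your argument is, in substance, the standard proof from those references, and it is correct: the exponential formula for Poisson point processes plus the cyclic loop-moment identity $\mu^{\rm loop}\bigl(\prod_i L^{x_i}\bigr)=\sum_{\sigma\ \text{one cycle}}\prod_i G(x_i,x_{\sigma(i)})$ yields the Fredholm/finite determinant $\det\bigl(\delta_{ij}+\lambda_iG(x_i,x_j)\bigr)^{-\alpha}$; the Gamma marginals, the Frullani identification of the L\'evy measure, and the moment--cumulant derivation of $\Perm_\alpha$ all follow as you say (the passage to Dirac test functions is indeed harmless in 1D because $x\mapsto L^x(\gamma)$ is continuous). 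For the last part, of your two suggested routes the Pitman--Yor/Shiga--Watanabe one is the one to keep: the additivity of $\mathrm{BESQ}^\delta$ in $\delta$ together with the $\delta=1$ case $\tfrac12\phi_{\R_+}^2=W^2$ shows the $\mathrm{BESQ}^{2\alpha}$ Laplace functional is exactly $\det(I+G_{\R_+}V)^{-\alpha}$, which pins down the finite-dimensional laws and, with weak uniqueness for the squared Bessel SDE, gives the claim; the alternative via a spatial Markov property and excursion theory is the only genuinely sketchy step in your write-up and is not needed. Two minor points you leave implicit: the pure-jump (no drift) assertion follows from $\lim_{\lambda\to\infty}\lambda^{-1}\log(1+\lambda G(x,x))=0$, and the stationary case requires checking that the invariant law of the CIR-type SDE is the $\operatorname{Gamma}(\alpha,G_K(x,x)^{-1})$ marginal, which your determinant identity already provides.
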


\section{Gaussian beta ensembles}
\label{Sec GbE}

For references on Gaussian beta ensembles, see
\cite{DumitriuEdelman02TriDiagBeta,OxfordHandbookRandomMatrixBeta},
\cite[Section~1.2.2]{EynardKimuraRibault18RandomMatrices},
and 
\cite[Section~4.5]{AndersonGuionnetZeitouni09RandomMatrix}.
Fix $n\geq 2$. 
For 
$\lambda=(\lambda_{1},\lambda_{2},\dots ,\lambda_{n})\in\R^{n}$,
$D(\lambda)$ will denote the Vandermonde determinant
\begin{displaymath}
D(\lambda):=
\prod_{1\leq j<j'\leq n}(\lambda_{j'}-\lambda_{j}).
\end{displaymath}
For $q\geq 1$, $p_{q}(\lambda)$ will denote the $q$-th power sum polynomial
\begin{displaymath}
p_{q}(\lambda):=\sum_{j=1}^{n}\lambda_{j}^{q} .
\end{displaymath}
By convention,
\begin{displaymath}
p_{0}(\lambda)=n.
\end{displaymath}
A Gaussian beta ensemble G$\beta$E, with $\beta> -\frac{2}{n}$, follows the distribution
\begin{equation}
\label{Eq GbE}
\dfrac{1}{Z_{\beta,n}}
\vert D(\lambda)\vert^{\beta}
e^{-\frac{1}{2}p_{2}(\lambda)} 
\prod_{j=1}^{n}d\lambda_{j},
\end{equation}
where $Z_{\beta,n}$ is given by 
(\cite[Formula~(17.6.7)]{Mehta04RandomMatrices}
 and
\cite[Formula~(1.2.23)]{EynardKimuraRibault18RandomMatrices})
\begin{displaymath}
Z_{\beta,n}=(2\pi)^{\frac{n}{2}}
\prod_{j=1}^{n}
\dfrac{\Gamma\big(1+j\frac{\beta}{2}\big)}
{\Gamma\big(1+\frac{\beta}{2}\big)}.
\end{displaymath}
The brackets $\langle\cdot\rangle_{\beta,n}$
will denote the expectation with respect to
\eqref{Eq GbE}.
For $\beta=0$ one gets $n$ i.i.d. $\mathcal{N}(0,1)$ Gaussians.
For $\beta$ equal to 1, 2, resp. 4, one gets the eigenvalue distribution of GOE, GUE, resp. GSE random matrices
\cite{Mehta04RandomMatrices,EynardKimuraRibault18RandomMatrices}.
Usually the G$\beta$E are studied for $\beta>0$
\cite{DumitriuEdelman02TriDiagBeta}, 
but the distribution \eqref{Eq GbE} is well defined for all
$\beta>-\frac{2}{n}$. 
For $\beta\in (-\frac{2}{n},0)$ there is an attraction between
the $\lambda_{j}$-s instead of a repulsion as for
$\beta >0$.
Moreover, as $\beta\to -\frac{2}{n}$, 
$\lambda$ under \eqref{Eq GbE} converges in law to
\begin{equation}
\label{Eq n ident Gauss}
\Big(\dfrac{1}{\sqrt{n}}\xi,\dfrac{1}{\sqrt{n}}\xi,
\dots ,\dfrac{1}{\sqrt{n}}\xi\Big),
\end{equation}
where $\xi$ follows $\mathcal{N}(0,1)$.

Let $d(\beta,n)$ denote
\begin{displaymath}
d(\beta,n) = n + n(n-1)\dfrac{\beta}{2}.
\end{displaymath}
One can see $d(\beta,n)$ as a kind of pseudo-dimension.
For $\beta\in\{1,2,4\}$, $d(\beta,n)$ is the dimension of the corresponding space of matrices.

Let $\nu=(\nu_{1},\nu_{2},\dots,\nu_{m})$, where
$m\geq 1$, and for all
$k\in\{1,2,\dots,m\}$,
$\nu_{k}\in\mathbb{N}\setminus\{0\}$.
We will denote
\begin{displaymath}
m(\nu) = m, \qquad
\vert\nu\vert = \sum_{k=1}^{m(\nu)}\nu_{k}.
\end{displaymath}
Let $p_{\nu}(\lambda)$ denote
\begin{displaymath}
p_{\nu}(\lambda):=
\prod_{k=1}^{m(\nu)}
p_{\nu_{k}}(\lambda).
\end{displaymath}
By convention, we set
$p_{\emptyset}(\lambda)=1$ and
$\vert\emptyset\vert=0$.
Note that $p_{\emptyset}(\lambda)\neq p_{0}(\lambda)$.
We are interested in the expression of the moments
$\langle p_{\nu}(\lambda)\rangle_{\beta,n}$.
These are 0 if $\vert\nu\vert$ is not even.
For $\vert\nu\vert$ even, these moments are given by a recurrence
known as \textit{loop equation or
Schwinger-Dyson equation}
(\cite[Lemma~4.13]{LaCroix09PhD},
\cite[slide~$3/15$]{LaCroix13SlidesBeta}
and \cite[Section~4.1.1]{EynardKimuraRibault18RandomMatrices}).
See the Appendix for the expressions of some moments.

\begin{prop}[Schwinger-Dyson equation
\cite{LaCroix09PhD,LaCroix13SlidesBeta,
EynardKimuraRibault18RandomMatrices}]
\label{Prop SD beta}
For every $\beta> -2/n$ and 
every $\nu$ as above with $\vert\nu\vert$ even,
\begin{eqnarray}
\label{Eq SD beta}
\langle p_{\nu}(\lambda)\rangle_{\beta,n} &=& 
\dfrac{\beta}{2}\sum_{i=1}^{\nu_{m(\nu)}-1}
\langle p_{(\nu_{r})_{r\neq m(\nu)}}(\lambda)
p_{i-1}(\lambda)
p_{\nu_{m(\nu)}-1-i}(\lambda)\rangle_{\beta,n}
\\ 
\nonumber
&& +
\Big(1-\dfrac{\beta}{2}\Big)(\nu_{m(\nu)}-1)
\langle p_{(\nu_{r})_{r\neq m(\nu)}}(\lambda)
p_{\nu_{m(\nu)}-2}(\lambda)\rangle_{\beta,n}
\\ 
\nonumber
&& +
\sum_{k=1}^{m(\nu)-1} \nu_{k}
\langle p_{(\nu_{r})_{r\neq k,m(\nu)}}(\lambda)
p_{\nu_{k}+\nu_{m(\nu)}-2}(\lambda)\rangle_{\beta,n}
,
\end{eqnarray}
where $p_{0}(\lambda)=n$.
In particular, for $q$ even,
\begin{displaymath}
\langle p_{q}(\lambda)\rangle_{\beta,n} =
\dfrac{\beta}{2}\sum_{i=1}^{q-1}
\langle 
p_{i-1}(\lambda)
p_{q-1-i}(\lambda)\rangle_{\beta,n}
+
\Big(1-\dfrac{\beta}{2}\Big)(q-1)
\langle p_{q-2}(\lambda)\rangle_{\beta,n}
,
\end{displaymath}
and for $\nu$ with $ \nu_{m(\nu)}=1$,
\begin{displaymath}
\langle p_{\nu}(\lambda)\rangle_{\beta,n} =
\sum_{k=1}^{m(\nu)-1} \nu_{k}
\langle p_{(\nu_{r})_{r\neq k,m(\nu)}}(\lambda)
p_{\nu_{k}-1}(\lambda)\rangle_{\beta,n}
.
\end{displaymath}
The recurrence \eqref{Eq SD beta} and the initial condition
$p_{0}(\lambda)=n$ determine all the moments
$\langle p_{\nu}(\lambda)\rangle_{\beta,n}$.
\end{prop}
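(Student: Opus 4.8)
The plan is to prove \eqref{Eq SD beta} by the classical log-gas / loop-equation integration-by-parts argument, carried out first for $\beta>0$, and then to extend the identity to the full range $\beta>-2/n$ by analytic continuation in $\beta$; the determination statement will be a short induction on $|\nu|$.

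First I would set $\rho_{\beta,n}(\lambda)=\vert D(\lambda)\vert^{\beta}e^{-\frac12 p_{2}(\lambda)}$, so that \eqref{Eq GbE} is $Z_{\beta,n}^{-1}\rho_{\beta,n}(\lambda)\prod_{j}d\lambda_{j}$, and record the logarithmic-derivative identity, valid off the diagonals $\{\lambda_{j}=\lambda_{j'}\}$,
\[
\partial_{\lambda_{j}}\log\rho_{\beta,n}(\lambda)=\beta\sum_{j'\neq j}\frac{1}{\lambda_{j}-\lambda_{j'}}-\lambda_{j}.
\]
Fix $\nu$, write $m=m(\nu)$ and $\ell=\nu_{m}$, and assume $\beta>0$. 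The key input is that
\[
\sum_{j=1}^{n}\int_{\R^{n}}\partial_{\lambda_{j}}\Big(\lambda_{j}^{\ell-1}\,p_{(\nu_{r})_{r\neq m}}(\lambda)\,\rho_{\beta,n}(\lambda)\Big)\prod_{j'=1}^{n}d\lambda_{j'}=0.
\]
This vanishing needs justification: for $\beta>0$ the integrand is, as a function of the single variable $\lambda_{j}$, continuous on all of $\R$ (since $\vert x\vert^{\beta}$ is continuous), locally absolutely continuous with derivative locally integrable (since $\vert x\vert^{\beta-1}$ is), globally integrable, and Gaussian-decaying at $\pm\infty$; hence its $\lambda_{j}$-integral is $0$ and one integrates out the remaining variables by Fubini.

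Second, I would expand the derivative with the Leibniz rule and sum over $j$, reading off three contributions. The derivative on $\lambda_{j}^{\ell-1}$ gives $(\ell-1)\langle p_{(\nu_{r})_{r\neq m}}p_{\ell-2}\rangle_{\beta,n}$ (with the convention $p_{0}=n$; the term is simply absent when $\ell=1$). The derivative on a factor $p_{\nu_{k}}$ with $k\neq m$ gives $\nu_{k}\langle p_{(\nu_{r})_{r\neq k,m}}p_{\nu_{k}+\ell-2}\rangle_{\beta,n}$. The logarithmic derivative of $\rho_{\beta,n}$ contributes $-\langle p_{\nu}\rangle_{\beta,n}$ from the $-\lambda_{j}$ part, and, from the $\beta$-part, one symmetrizes: $\sum_{j\neq j'}\frac{\lambda_{j}^{\ell-1}}{\lambda_{j}-\lambda_{j'}}=\frac12\sum_{j\neq j'}\frac{\lambda_{j}^{\ell-1}-\lambda_{j'}^{\ell-1}}{\lambda_{j}-\lambda_{j'}}=\frac12\sum_{i=1}^{\ell-1}\big(p_{i-1}p_{\ell-1-i}-p_{\ell-2}\big)$, using $\frac{a^{\ell-1}-b^{\ell-1}}{a-b}=\sum_{i=1}^{\ell-1}a^{i-1}b^{\ell-1-i}$ and $\sum_{j\neq j'}\lambda_{j}^{a}\lambda_{j'}^{b}=p_{a}p_{b}-p_{a+b}$; this produces $\frac{\beta}{2}\sum_{i=1}^{\ell-1}\langle p_{(\nu_{r})_{r\neq m}}p_{i-1}p_{\ell-1-i}\rangle_{\beta,n}-\frac{\beta}{2}(\ell-1)\langle p_{(\nu_{r})_{r\neq m}}p_{\ell-2}\rangle_{\beta,n}$. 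Setting the sum of the four terms equal to $0$ and merging the two multiples of $(\ell-1)\langle p_{(\nu_{r})_{r\neq m}}p_{\ell-2}\rangle_{\beta,n}$ into the coefficient $1-\frac{\beta}{2}$ gives exactly \eqref{Eq SD beta}. The two displayed special cases follow by specialization: $m(\nu)=1$ kills the last sum, and $\nu_{m}=1$ kills the first two terms (the sum $\sum_{i=1}^{0}$ being empty and $\nu_{m}-1=0$).

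Third, I would remove the hypothesis $\beta>0$. On $\{\Ren\beta>-2/n\}$ both $\int_{\R^{n}}\vert D(\lambda)\vert^{\beta}e^{-p_{2}(\lambda)/2}p_{\mu}(\lambda)\prod_{j}d\lambda_{j}$ and $Z_{\beta,n}$ are absolutely convergent and holomorphic in $\beta$ (Morera plus dominated convergence, the local domination near the diagonals being precisely the condition $\Ren\beta>-2/n$), and $Z_{\beta,n}$ is nonzero on a complex neighbourhood of the real interval $(-2/n,+\infty)$; hence each moment $\langle p_{\mu}\rangle_{\beta,n}$, and so both sides of \eqref{Eq SD beta}, are real-analytic in $\beta$ there, and, agreeing for $\beta>0$, agree on all of $(-2/n,+\infty)$. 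Finally, for the determination claim note that every moment on the right-hand side of \eqref{Eq SD beta} is of the form $\langle p_{\mu}\rangle_{\beta,n}$ with $\vert\mu\vert=\vert\nu\vert-2$ (reading an index $0$ as the constant $n$), so, since $\langle p_{\nu}\rangle_{\beta,n}=0$ for $\vert\nu\vert$ odd and $\langle p_{\emptyset}\rangle_{\beta,n}=1$, strong induction on the even integer $\vert\nu\vert$ finishes. The one genuinely delicate point is the justification of the no-boundary-term integration by parts at the diagonal singularities of $\vert D(\lambda)\vert^{\beta}$ — this is what confines the direct argument to $\beta>0$ — together with the ensuing analytic-continuation step needed to cover $\beta\in(-2/n,0]$; the algebra (Leibniz rule and the symmetrization identity) is routine.
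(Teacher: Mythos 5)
Your proposal is correct and follows essentially the route the paper itself takes: the paper defers the $\beta>0$ case to the cited references (La~Croix, Eynard--Kimura--Ribault), whose argument is exactly the integration-by-parts/loop-equation derivation you write out, and then invokes analyticity in $\beta$ to cover $\beta\in(-2/n,0]$, as you do. You simply supply in full the details (justification of the vanishing boundary terms for $\beta>0$, the symmetrization identity, and the holomorphy of the moments on $\{\Ren\beta>-2/n\}$) that the paper outsources.
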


\begin{proof}
Note that \eqref{Eq SD beta} determines the moments 
$\langle p_{\nu}(\lambda)\rangle_{\beta,n}$ because on the left-hand side one has a degree $\vert\nu\vert$, 
and on the right-hand side all the terms have a degree
$\vert\nu\vert-2$.
It is enough to check \eqref{Eq SD beta}
for $\beta>0$, since both sides are analytic in $\beta$.
For $\beta>0$,
we outline the proof appearing in
\cite[Lemma~4.13]{LaCroix09PhD} and
\cite[Section~4.1.1]{EynardKimuraRibault18RandomMatrices},
so as to be self-contained.
Let us denote here
$\tilde{\nu}:=(\nu_{1},\nu_{2},\dots,\nu_{m(\nu)-1})$,
so that
$p_{\nu}(\lambda)= p_{\nu_{m(\nu)}}(\lambda)p_{\tilde{\nu}}(\lambda)$.
We have that
\begin{eqnarray*}
\dfrac{\partial}{\partial \lambda_{1}}
\Big(
\lambda_{1}^{\nu_{m(\nu)}-1}
p_{\tilde{\nu}}(\lambda)
\vert D(\lambda)\vert^{\beta}
e^{-\frac{1}{2}p_{2}(\lambda)}
\Big)
&=&
-\lambda_{1}^{\nu_{m(\nu)}}
p_{\tilde{\nu}}(\lambda)
\vert D(\lambda)\vert^{\beta}
e^{-\frac{1}{2}p_{2}(\lambda)}
\\
&&+ \beta \sum_{j=2}^{n}
\dfrac{\lambda_{1}^{\nu_{m(\nu)}-1}}{\lambda_{1}-\lambda_{j}}
p_{\tilde{\nu}}(\lambda)
\vert D(\lambda)\vert^{\beta}
e^{-\frac{1}{2}p_{2}(\lambda)}
\\
&&+ (\nu_{m(\nu)}-1)
\lambda_{1}^{\nu_{m(\nu)}-2}
p_{\tilde{\nu}}(\lambda)
\vert D(\lambda)\vert^{\beta}
e^{-\frac{1}{2}p_{2}(\lambda)}
\\
&&+
\sum_{k=1}^{m(\nu)-1}
\nu_{k}
\lambda_{1}^{\nu_{k}+\nu_{m(\nu)}-2}
p_{(\nu_{r})_{r\neq k,m(\nu)}}(\lambda)
\vert D(\lambda)\vert^{\beta}
e^{-\frac{1}{2}p_{2}(\lambda)}.
\end{eqnarray*}
Since
\begin{displaymath}
\int_{\R}
\dfrac{\partial}{\partial \lambda_{1}}
\Big(
\lambda_{1}^{\nu_{m(\nu)}-1}
p_{\tilde{\nu}}(\lambda)
\vert D(\lambda)\vert^{\beta}
e^{-\frac{1}{2}p_{2}(\lambda)}
\Big)
d\lambda_{1} = 0,
\end{displaymath}
we get that
\begin{eqnarray*}
\langle
\lambda_{1}^{\nu_{m(\nu)}}
p_{\tilde{\nu}}(\lambda)
\rangle_{\beta,n}
&=&
\beta
\sum_{j=2}^{n}
\bigg\langle
\dfrac{\lambda_{1}^{\nu_{m(\nu)}-1}}{\lambda_{1}-\lambda_{j}}
p_{\tilde{\nu}}(\lambda)
\bigg\rangle_{\beta,n}
+
(\nu_{m(\nu)}-1)
\langle
\lambda_{1}^{\nu_{m(\nu)}-2}
p_{\tilde{\nu}}(\lambda)
\rangle_{\beta,n}
\\&&+
\sum_{k=1}^{m(\nu)-1}
\nu_{k}
\langle
\lambda_{1}^{\nu_{k}+\nu_{m(\nu)}-2}
p_{(\nu_{r})_{r\neq k,m(\nu)}}(\lambda)
\rangle_{\beta,n}
.
\end{eqnarray*}
Analogous relations hold for all other indices
$j'\in\{2,\dots,n\}$.
By summing over $j'\in\{1,2,\dots,n\}$,
we get
\begin{eqnarray*}
\langle
p_{\nu}(\lambda)
\rangle_{\beta,n}
&=&
\beta
\sum_{1\leq j <j'\leq n}
\bigg\langle
\dfrac{\lambda_{j}^{\nu_{m(\nu)}-1}-\lambda_{j'}^{\nu_{m(\nu)}-1}}
{\lambda_{j}-\lambda_{j'}}
p_{\tilde{\nu}}(\lambda)
\bigg\rangle_{\beta,n}
+
(\nu_{m(\nu)}-1)
\langle
p_{\nu_{m(\nu)}-2}(\lambda)
p_{\tilde{\nu}}(\lambda)
\rangle_{\beta,n}
\\&&+
\sum_{k=1}^{m(\nu)-1}
\nu_{k}
\langle
p_{\nu_{k}+\nu_{m(\nu)}-2}(\lambda)
p_{(\nu_{r})_{r\neq k,m(\nu)}}(\lambda)
\rangle_{\beta,n}
.
\end{eqnarray*}
Furthermore,
\begin{displaymath}
\sum_{1\leq j <j'\leq n}
\dfrac{\lambda_{j}^{\nu_{m(\nu)}-1}-\lambda_{j'}^{\nu_{m(\nu)}-1}}
{\lambda_{j}-\lambda_{j'}}
=
-\dfrac{1}{2}(\nu_{m(\nu)}-1)p_{\nu_{m(\nu)}-2}(\lambda)
+\dfrac{1}{2}
\sum_{i=1}^{\nu_{m(\nu)-1}}
p_{i-1}(\lambda)
p_{\nu_{m(\nu)}-1-i}(\lambda).
\end{displaymath}
So we get \eqref{Eq SD beta}.
\end{proof}

Next are some elementary properties of G$\beta$E,
which follow from the form of the density \eqref{Eq GbE}.

\begin{prop}
\label{Proberty GbE}
The following holds.
\begin{enumerate}
\item For every $\beta > -2/n$, $\frac{1}{\sqrt{n}}p_{1}(\lambda)$
under G$\beta$E has for distribution $\mathcal{N}(0,1)$.
\item For every $\beta > -2/n$, $p_{2}(\lambda)/2$
under G$\beta$E has for distribution
$\operatorname{Gamma}(d(\beta,n)/2,1)$.
\item $p_{1}(\lambda)$ and 
$\lambda-\frac{1}{n}p_{1}(\lambda)$
under G$\beta$E are independent.
\item $\frac{1}{2}
\big(p_{2}(\lambda)-\frac{1}{n}p_{1}(\lambda)^{2}\big)
=\frac{1}{2}p_{2}\big(\lambda-\frac{1}{n}p_{1}(\lambda)\big)$
under G$\beta$E has for distribution
$\operatorname{Gamma}((d(\beta,n)-1)/2,1)$.
\end{enumerate}
\end{prop}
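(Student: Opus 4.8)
The plan is to reduce all four claims to elementary changes of variables in the density \eqref{Eq GbE}, using only the translation invariance $D(\lambda+c\1)=D(\lambda)$ and the fact that $D$ is a homogeneous polynomial of degree $\binom{n}{2}=\tfrac{n(n-1)}{2}$.

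First I would establish (3), from which (1) is then immediate. Write $\lambda=\bar\lambda\,\1+\mu$ with $\bar\lambda=\tfrac1n p_{1}(\lambda)$ and $\mu=\lambda-\tfrac1n p_{1}(\lambda)\1$ lying in the hyperplane $H=\{\mu\in\R^{n}:\sum_{j}\mu_{j}=0\}$. This linear change of variables has constant Jacobian, so $\prod_{j}d\lambda_{j}=\sqrt{n}\,d\bar\lambda\,d\mathrm{Leb}_{H}(\mu)$. Since $D(\lambda)=D(\mu)$ and, because $\sum_{j}\mu_{j}=0$, the cross term vanishes in $p_{2}(\lambda)=n\bar\lambda^{2}+p_{2}(\mu)$, the density \eqref{Eq GbE} factorizes as a product of a factor proportional to $e^{-\frac{n}{2}\bar\lambda^{2}}\,d\bar\lambda$ and a factor proportional to $|D(\mu)|^{\beta}e^{-\frac12 p_{2}(\mu)}\,d\mathrm{Leb}_{H}(\mu)$. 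This gives the independence of $p_{1}(\lambda)$ and $\mu$, i.e.\ (3); and reading off the first factor, $\bar\lambda\sim\mathcal N(0,1/n)$, equivalently $\tfrac1{\sqrt n}p_{1}(\lambda)\sim\mathcal N(0,1)$, which is (1).

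For (2) I would pass to spherical coordinates $\lambda=\rho\,\omega$ with $\rho=\sqrt{p_{2}(\lambda)}\ge 0$ and $\omega\in S^{n-1}$. Homogeneity gives $|D(\rho\omega)|^{\beta}=\rho^{\beta n(n-1)/2}|D(\omega)|^{\beta}$, and since $\prod_{j}d\lambda_{j}=\rho^{n-1}d\rho\,d\sigma(\omega)$ the density becomes a product of a radial factor proportional to $\rho^{\,d(\beta,n)-1}e^{-\rho^{2}/2}\,d\rho$ (using $\beta n(n-1)/2+n=d(\beta,n)$) and an angular factor. Substituting $u=\rho^{2}=p_{2}(\lambda)$ turns the radial factor into something proportional to $u^{\,d(\beta,n)/2-1}e^{-u/2}\,du$, so $p_{2}(\lambda)\sim\operatorname{Gamma}(d(\beta,n)/2,1/2)$ and hence $p_{2}(\lambda)/2\sim\operatorname{Gamma}(d(\beta,n)/2,1)$, which is (2).

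Finally, for (4) there are two routes. The direct one repeats the previous computation inside $H$: by the factorization from step (3), $\mu$ has law proportional to $|D(\mu)|^{\beta}e^{-\frac12 p_{2}(\mu)}\,d\mathrm{Leb}_{H}(\mu)$, and on the $(n-1)$-dimensional space $H$ the polynomial $D$ is still homogeneous of degree $n(n-1)/2$ while $p_{2}$ is the squared Euclidean norm; spherical coordinates in $H$ now carry $\rho^{(n-1)-1}\,d\rho$, so the same substitution $u=p_{2}(\mu)$ yields a radial density proportional to $u^{\,(d(\beta,n)-1)/2-1}e^{-u/2}\,du$, i.e.\ $\tfrac12 p_{2}(\mu)=\tfrac12\big(p_{2}(\lambda)-\tfrac1n p_{1}(\lambda)^{2}\big)\sim\operatorname{Gamma}((d(\beta,n)-1)/2,1)$. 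The alternative route avoids redoing the computation: $\tfrac n2\bar\lambda^{2}$ and $\tfrac12 p_{2}(\mu)$ are independent by (3), $\tfrac n2\bar\lambda^{2}=\tfrac12(\sqrt n\,\bar\lambda)^{2}\sim\operatorname{Gamma}(1/2,1)$ by (1), and their sum $\tfrac12 p_{2}(\lambda)\sim\operatorname{Gamma}(d(\beta,n)/2,1)$ by (2), so comparing Laplace transforms forces $\tfrac12 p_{2}(\mu)\sim\operatorname{Gamma}((d(\beta,n)-1)/2,1)$. I do not expect a genuine obstacle; the only points needing care are the bookkeeping of Jacobians and normalizing constants and — for the direct route to (4) — noting that it is precisely the drop from $\rho^{n-1}$ to $\rho^{n-2}$ in the volume element on $H$, together with the unchanged homogeneity degree of $D$, that lowers the shape parameter by $1/2$ relative to (2).
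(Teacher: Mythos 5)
Your proof is correct and is precisely the elementary change-of-variables argument the paper has in mind: the paper states these properties without proof, remarking only that they ``follow from the form of the density,'' and your orthogonal decomposition $\lambda=\bar\lambda\,\1+\mu$ together with the radial/angular factorization supplies exactly the missing details. The Jacobians, the translation invariance and homogeneity degree $n(n-1)/2$ of $D$, and the drop from $\rho^{n-1}$ to $\rho^{n-2}$ in the volume element on the zero-sum hyperplane are all handled correctly, and the Laplace-transform shortcut to (4) is a valid alternative to redoing the radial computation.
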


\begin{proof}
One can factorize the density \eqref{Eq GbE} as
\begin{displaymath}
\dfrac{1}{Z_{\beta,n}}
\Big\vert D\Big(\lambda-\frac{1}{n}p_{1}(\lambda)\Big)
\Big\vert^{\beta}
e^{-\frac{1}{2}p_{2}\big(\lambda-\frac{1}{n}p_{1}(\lambda)\big)} 
\prod_{j=1}^{n-1}
d\Big(\lambda_{j}-\frac{1}{n}p_{1}(\lambda)\Big)
\times 
e^{-\frac{1}{2n}p_{1}(\lambda)^{2}}
d p_{1}(\lambda),
\end{displaymath}
where
\begin{displaymath}
D\Big(\lambda-\frac{1}{n}p_{1}(\lambda)\Big)
= \prod_{1\leq j<j'\leq n}
\Big(\Big(\lambda_{j'}-\frac{1}{n}p_{1}(\lambda)\Big)-\Big(\lambda_{j}-\frac{1}{n}p_{1}(\lambda)\Big)\Big)
= D(\lambda).
\end{displaymath}
This immediately implies (3) and (1).
The property (2) is implied by (4), (3) and (1).
The property (4) can be obtained by computing a Laplace transform.
Fix $K>0$.
We have that
\begin{displaymath}
\big\langle e^{-\frac{1}{2}K 
p_{2}\big(\lambda-\frac{1}{n}p_{1}(\lambda)\big)}\big\rangle_{\beta,n}
=
\dfrac{1}{Z_{\beta,n}}
\int_{\R^{n}} \vert D(\lambda)\vert^{\beta}
e^{-\frac{1}{2}(K+1)p_{2}\big(\lambda-\frac{1}{n}p_{1}(\lambda)\big)
-\frac{1}{2n}p_{1}(\lambda)^{2}}
\prod_{j=1}^{n} d\lambda_{j}.
\end{displaymath}
By performing the change of variables 
$\tilde{\lambda}=(K+1)^{\frac{1}{2}}\lambda$,
we get that the expression above equals
\begin{multline*}
\dfrac{(K+1)^{-\frac{n}{2}}}{Z_{\beta,n}}
\int_{\R^{n}} 
\vert D((K+1)^{-\frac{1}{2}}\tilde{\lambda})\vert^{\beta}
e^{-\frac{1}{2}p_{2}\big(\tilde{\lambda}
-\frac{1}{n}p_{1}(\tilde{\lambda})\big)
-\frac{1}{2n(K+1)}p_{1}(\tilde{\lambda})^{2}}
\prod_{j=1}^{n} d\tilde{\lambda}_{j}
\\
=
\dfrac{(K+1)^{-\frac{1}{2} d(\beta,n)}}{Z_{\beta,n}}
\int_{\R^{n}} 
\vert D(\tilde{\lambda})\vert^{\beta}
e^{-\frac{1}{2}p_{2}(\tilde{\lambda})
+\frac{K}{2n(K+1)}p_{1}(\tilde{\lambda})^{2}}
\prod_{j=1}^{n} d\tilde{\lambda}_{j} .
\end{multline*}
Thus,
\begin{displaymath}
\big\langle e^{-\frac{1}{2}K 
p_{2}\big(\lambda-\frac{1}{n}p_{1}(\lambda)\big)}\big\rangle_{\beta,n}
=
(K+1)^{-\frac{1}{2}d(\beta,n)}
\big\langle e^{\frac{K}{2n(K+1)}p_{1}(\lambda)^{2}}
\big\rangle_{\beta,n}
=(K+1)^{-\frac{1}{2}(d(\beta,n)-1)}.
\end{displaymath}
So we get the Laplace transform of a
$\operatorname{Gamma}((d(\beta,n)-1)/2,1)$ r.v.
\end{proof}

Next is an embryonic version of the BFS-Dynkin isomorphism
(Theorem \eqref{ThmIsoDynkin}) for the G$\beta$E. 
One should imagine that the state space is reduced to one vertex, 
and a particle on it gets killed at an exponential time.

\begin{prop}
\label{Prop Dynkin GbE}
Let $\beta> -2/n$. The following holds.
\begin{enumerate}
\item Let $a\geq 0$. 
Let $h: \mathbb{R}^{n} \rightarrow \mathbb{R}$ 
be a measurable function such that
$\langle \vert h(\lambda)\vert\rangle_{\beta,n}<+\infty$.
Assume that $h$ is $a$-homogeneous, that is to say
$h(s\lambda)=s^{a}h(\lambda)$ for every $s>0$.
Let $F :[0,+\infty)\rightarrow\mathbb{R}$ be a bounded measurable function.
Let $\theta$ be a r.v. with distribution
$\operatorname{Gamma}((d(\beta,n)+a)/2,1)$.
Then
\begin{equation}
\label{Eq homog}
\langle h(\lambda)F(p_{2}(\lambda)/2)\rangle_{\beta,n}
=\langle h(\lambda)\rangle_{\beta,n}
\mathbb{E}[F(\theta)].
\end{equation}
\item In particular, let $\nu$ be a finite family of positive integers such that $\vert\nu\vert$ is even.
Let $\mathcal{T}_{1},\dots, \mathcal{T}_{\vert\nu\vert/2}$ be an
i.i.d. family of exponential times of mean $1$,
independent of the G$\beta$E. Then
\begin{displaymath}
\langle p_{\nu}(\lambda)F(p_{2}(\lambda)/2)\rangle_{\beta,n}
=\langle  p_{\nu}\rangle_{\beta,n}
\mathbb{E}\big[
\langle F(p_{2}(\lambda)/2
+\mathcal{T}_{1}+\dots +\mathcal{T}_{\vert\nu\vert/2})\rangle_{\beta,n}
\big].
\end{displaymath}
\end{enumerate}
\end{prop}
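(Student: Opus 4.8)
The plan is to exploit the fact that, under G$\beta$E, the radial part $|\lambda|=\sqrt{p_2(\lambda)}$ and the angular part $\omega=\lambda/|\lambda|\in S^{n-1}$ are independent, and then to reduce both statements to a one-dimensional Gamma computation.

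For (1), I would pass to polar coordinates $\lambda=\rho\,\omega$ with $\rho\geq 0$, $\omega\in S^{n-1}$, so that $\prod_{j}d\lambda_j=\rho^{\,n-1}d\rho\,d\sigma(\omega)$. Since the Vandermonde determinant is homogeneous of degree $\binom n2=n(n-1)/2$ one has $|D(\rho\omega)|^{\beta}=\rho^{\,\beta n(n-1)/2}|D(\omega)|^{\beta}$, while $p_2(\lambda)=\rho^2$; hence the density \eqref{Eq GbE} factorizes as
\begin{displaymath}
\frac{1}{Z_{\beta,n}}\,\rho^{\,d(\beta,n)-1}e^{-\rho^2/2}\,d\rho\;\cdot\;|D(\omega)|^{\beta}\,d\sigma(\omega),
\end{displaymath}
which shows that $\rho$ and $\omega$ are independent and, via $t=\rho^2/2$, that $R:=p_2(\lambda)/2\sim\operatorname{Gamma}(d(\beta,n)/2,1)$ (point (2) of Proposition \ref{Proberty GbE}). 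Using $a$-homogeneity, $h(\lambda)=(2R)^{a/2}h(\omega)$, so by the independence of $R$ and $\omega$,
\begin{displaymath}
\langle h(\lambda)F(p_2(\lambda)/2)\rangle_{\beta,n}=2^{a/2}\,\langle h(\omega)\rangle_{\mathrm{ang}}\,\mathbb{E}\big[R^{a/2}F(R)\big],
\end{displaymath}
and likewise $\langle h(\lambda)\rangle_{\beta,n}=2^{a/2}\langle h(\omega)\rangle_{\mathrm{ang}}\,\mathbb{E}[R^{a/2}]$, where $\langle\cdot\rangle_{\mathrm{ang}}$ denotes expectation under the angular marginal. It then remains to invoke the elementary Gamma-tilting identity: if $R\sim\operatorname{Gamma}(\kappa,1)$ then $\mathbb{E}[R^{s}g(R)]=\mathbb{E}[R^{s}]\,\mathbb{E}[g(\theta)]$ with $\theta\sim\operatorname{Gamma}(\kappa+s,1)$, applied with $\kappa=d(\beta,n)/2$ and $s=a/2$; comparing the two displays gives \eqref{Eq homog}. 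The hypothesis $\langle|h(\lambda)|\rangle_{\beta,n}<\infty$ is exactly what makes the factorization and the interchange of integrals legitimate.

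For (2), I would apply (1) with $h=p_\nu$. Each $p_{\nu_k}$ is homogeneous of degree $\nu_k$, so $p_\nu$ is homogeneous of degree $|\nu|$; since G$\beta$E has moments of all orders and $F$ is bounded, the integrability hypothesis holds, and (1) gives $\langle p_\nu(\lambda)F(p_2(\lambda)/2)\rangle_{\beta,n}=\langle p_\nu\rangle_{\beta,n}\,\mathbb{E}[F(\theta)]$ with $\theta\sim\operatorname{Gamma}((d(\beta,n)+|\nu|)/2,1)$. Finally, since $p_2(\lambda)/2\sim\operatorname{Gamma}(d(\beta,n)/2,1)$ and the $\mathcal{T}_i$ are i.i.d.\ $\operatorname{Exp}(1)=\operatorname{Gamma}(1,1)$, independent of the G$\beta$E, additivity of the shape parameter under convolution of independent Gamma laws with common rate yields $p_2(\lambda)/2+\mathcal{T}_1+\dots+\mathcal{T}_{|\nu|/2}\sim\operatorname{Gamma}((d(\beta,n)+|\nu|)/2,1)$, i.e.\ this sum has the law of $\theta$. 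Hence $\mathbb{E}[F(\theta)]=\mathbb{E}\big[\langle F(p_2(\lambda)/2+\mathcal{T}_1+\dots+\mathcal{T}_{|\nu|/2})\rangle_{\beta,n}\big]$, which is the assertion.

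There is no real obstacle here: the argument is just a polar change of variables together with the Gamma tilting and convolution identities. The only points deserving care are the bookkeeping of exponents in the radial integral (so as to recognise $d(\beta,n)-1$ and the $\operatorname{Gamma}(d(\beta,n)/2,1)$ law), the use of the integrability hypothesis in (1), and, in (2), correctly identifying the homogeneity degree of $p_\nu$ as $|\nu|$ so that the shape parameters line up — the evenness of $|\nu|$ being what also makes $|\nu|/2$ an integer.
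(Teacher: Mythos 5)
Your proof is correct, but it takes a genuinely different route from the paper's. The paper reduces \eqref{Eq homog} to exponential test functions $F(t)=e^{-Kt}$ and computes $\langle h(\lambda)e^{-Kp_{2}(\lambda)/2}\rangle_{\beta,n}$ by the global scaling $\tilde\lambda=(K+1)^{1/2}\lambda$: homogeneity of $h$ and of $\vert D\vert^{\beta}$ produces the factor $(K+1)^{-(d(\beta,n)+a)/2}$, which is recognized as the Laplace transform of $\operatorname{Gamma}((d(\beta,n)+a)/2,1)$; the general case then follows by uniqueness of Laplace transforms. You instead make the underlying structure explicit: the polar factorization of the density \eqref{Eq GbE} shows that $\rho=\sqrt{p_{2}(\lambda)}$ and $\omega=\lambda/\rho$ are independent with $p_{2}(\lambda)/2\sim\operatorname{Gamma}(d(\beta,n)/2,1)$, and the identity then reduces to the elementary Gamma tilting $\mathbb{E}[R^{s}g(R)]=\mathbb{E}[R^{s}]\,\mathbb{E}[g(\theta)]$. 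This buys you the statement for arbitrary bounded measurable $F$ in one step, with no appeal to injectivity of the Laplace transform, and it re-derives point (2) of Proposition \ref{Proberty GbE} along the way; the paper's scaling argument is shorter and encodes the same homogeneity count $n+n(n-1)\beta/2+a$ without introducing the angular marginal. Your treatment of part (2) — homogeneity degree $\vert\nu\vert$ of $p_{\nu}$ plus additivity of Gamma shape parameters under independent convolution — matches what the paper leaves implicit in ``(1) clearly implies (2)''. One small remark: you do not actually need to divide by $\langle h(\lambda)\rangle_{\beta,n}$ (which could vanish); substituting the tilting identity into the factorized expression for $\langle h(\lambda)F(p_{2}(\lambda)/2)\rangle_{\beta,n}$ gives \eqref{Eq homog} directly, as your two displayed formulas already show.
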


\begin{proof}
(1) clearly implies (2). It is enough to check
\eqref{Eq homog} for $F$ of form
$F(t)=e^{-K t}$, with $K>0$.
Then
\begin{eqnarray*}
\langle h(\lambda)e^{-\frac{1}{2}K p_{2}(\lambda)}\rangle_{\beta,n}&=&
\dfrac{1}{Z_{\beta,n}}
\int_{\R^{n}} h(\lambda)\vert D(\lambda)\vert^{\beta}
e^{-\frac{1}{2}(K+1)p_{2}(\lambda)}
\prod_{j=1}^{n} d\lambda_{j}
\\
&=& 
\dfrac{(K+1)^{-\frac{n}{2}}}{Z_{\beta,n}}
\int_{\R^{n}} h((K+1)^{-\frac{1}{2}}\tilde{\lambda})
\vert D((K+1)^{-\frac{1}{2}}\tilde{\lambda})\vert^{\beta}
e^{-\frac{1}{2}p_{2}(\tilde{\lambda})}
\prod_{j=1}^{n} d\tilde{\lambda}_{j}
\\
&=&
(K+1)^{-\frac{1}{2}\big(n+n(n-1)\frac{\beta}{2} + a\big)}
\langle h(\tilde{\lambda})\rangle_{\beta,n},
\end{eqnarray*}
where on the second line we used the change of variables
$\tilde{\lambda}=(K+1)^{\frac{1}{2}}\lambda$, 
and on the third line the homogeneity.
Further, 
\begin{displaymath}
(K+1)^{-\frac{1}{2}\big(n+n(n-1)\frac{\beta}{2} + a\big)}
=\E[e^{-K\theta}].
\qedhere
\end{displaymath}
\end{proof}

\section{Isomorphisms for $\beta$-Dyson's Brownian motion}
\label{Sec b Dyson}

\subsection{$\beta$-Dyson's Brownian motions and the occupation fields of 1D Brownian loop soups}

For references on $\beta$-Dyson's Brownian motion, see
\cite{Dyson62BM,Chan92DysonBM,RogersShi93DysonBM,CepaLepingle97DysonBM,
CepaLepingle07NoMultCol},
\cite[Chapter~9]{Mehta04RandomMatrices} and
\cite[Section~4.3]{AndersonGuionnetZeitouni09RandomMatrix}. 
Let $\beta\geq 0$ and $n\geq 2$. 
The $\beta$-Dyson's Brownian motion is the process 
$(\lambda(x)=
(\lambda_{1}(x),\dots,
\lambda_{n}(x)))_{x\geq 0}$
with $\lambda_{1}(x)\geq\dots\geq\lambda_{n}(x)$,
satisfying the SDE
\begin{equation}
\label{Eq b Dyson}
d\lambda_{j}(x)=
\sqrt{2} dW_{j}(x)
+ \beta \sum_{j'\neq j} 
\dfrac{dx}{\lambda_{j}(x)-\lambda_{j'}(x)},
\end{equation}
with initial condition $\lambda(0)=0$.
The derivatives $(dW_{j}(x))_{1\leq j\leq n}$
are independent white noises.
Since we will be interested in isomorphisms with Brownian local times,
the variable $x$ corresponds here to a one-dimensional spatial variable rather than a time variable.
For every $x>0$, 
$\lambda(x)/\sqrt{G_{\R_{+}}(x,x)}
=\lambda(x)/\sqrt{2x}$,
is distributed, up to a reordering of the 
$\lambda_{j}(x)$-s, as a G$\beta$E \eqref{Eq GbE}.
For $\beta$ equal to $1,2$ resp. $4$,
$(\lambda(x))_{x\geq 0}$
is the diffusion of eigenvalues
in a Brownian motion on the space of real symmetric,
complex Hermitian, resp. quaternionic Hermitian
matrices.
For $\beta\geq 1$, there is no collision between the
$\lambda_{j}(x)$-s, and for  
$\beta\in[0,1)$ two consecutive $\lambda_{j}(x)$-s
can collide, but there is no collision of three or more particles
\cite{CepaLepingle07NoMultCol}.
Note that for $\beta>0$ and $j\in\llbracket 2,n\rrbracket$,
$(\lambda_{j}(x)-\lambda_{j-1}(x))/2$
behaves near level $0$ like a Bessel process of dimension
$\beta + 1$ reflected at level $0$, and since
$\beta + 1>1$, the complication with the principal value
and the local time at zero does not occur;
see \cite[Chapter~10]{Yor97AspectsBMII}.
In particular, each $(\lambda_{j}(x))_{x\geq 0}$
is a semimartingale.
For $\beta=0$, 
$(\lambda(x)/\sqrt{2})_{x\geq 0}$ 
is just a reordered family of
$n$ i.i.d. standard Brownian motions.

\begin{rmk}
\label{Rmk beta Dyson}
We restrict to $\beta\geq 0$ because the case $\beta<0$ has not been considered in the literature. The problem is the extension of the process after a collision of $\lambda_{j}(x)$-s.
The collision of three or more particles, 
including all the $n$ together
for $\beta<-\frac{2(n-3)}{n(n-1)}$,
is no longer excluded.
However, we believe that the 
$\beta$-Dyson's Brownian motion can be defined
for all $\beta>-\frac{2}{n}$.
This is indeed the case if $n=2$.
One can use the reflected Bessel processes for that.
Let $(\rho(x))_{x\geq 0}$ be the Bessel process of 
dimension $\beta + 1$, reflected at level $0$,
satisfying away from $0$ the SDE
\begin{displaymath}
d\rho(x)= dW(x)+\dfrac{\beta}{2\rho(x)} dx,
\end{displaymath}
with $\rho(0)=0$.
The reflected version is precisely defined for
$\beta>-1=\frac{-2}{2}$;
see \cite[Section~XI.1]{RevuzYor1999BMGrundlehren} 
and  \cite[Section~3]{Lawler19Bessel}.
Let $(\widetilde{W}(x))_{x\geq 0}$ be
a standard Brownian motion starting from $0$,
independent from $(W(x))_{x\geq 0}$
Then, for $n=2$, one can construct the 
$\beta$-Dyson's Brownian motion as
\begin{equation}
\label{Eq b Dyson n=2 Bessel}
\lambda_{1}(x)=
\widetilde{W}(x) + \rho(x),
\qquad
\lambda_{2}(x)=
\widetilde{W}(x) - \rho(x).
\end{equation}
\end{rmk}

Next are some simple properties of the $\beta$-Dyson's Brownian motion.

\begin{prop}
\label{Prop b Dyson}
The following holds.
\begin{enumerate}
\item The process
$\big(\frac{1}{\sqrt{n}}p_{1}(\lambda(x))\big)_{x\geq 0}$
has the same law as $\phi_{\R_{+}}$.
\item The process $(\frac{1}{2}p_{2}(\lambda(x)))_{x\geq 0}$
is a square Bessel process of dimension $d(\beta,n)$
starting from $0$.
\item The processes
$(p_{1}(\lambda(x)))_{x\geq 0}$
and 
$\big(\lambda(x)
-\frac{1}{n}p_{1}(\lambda(x))\big)_{x\geq 0}$
are independent.
\item The process
$\big(\frac{1}{2}\big(p_{2}(\lambda(x))
-\frac{1}{n}p_{1}(\lambda(x))^{2}\big)\big)_{x\geq 0}$
is a square Bessel process of dimension $d(\beta,n)-1$
starting from $0$.
\end{enumerate}
\end{prop}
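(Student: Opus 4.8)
The plan is to reduce everything to the SDE \eqref{Eq b Dyson} by applying It\^o's formula to the relevant functions of $\lambda(x)$ and identifying the resulting equations with those characterizing $\phi_{\R_{+}}$ and square Bessel processes. First I would compute $d p_{1}(\lambda(x))=\sum_{j}d\lambda_{j}(x)$; the drift terms $\beta\sum_{j}\sum_{j'\neq j}\frac{dx}{\lambda_{j}-\lambda_{j'}}$ cancel by antisymmetry, leaving $d p_{1}(\lambda(x))=\sqrt{2}\sum_{j}dW_{j}(x)$, which is $\sqrt{2}$ times a standard Brownian motion (after a Dambis--Dubins--Schwarz argument, since $\sum_j dW_j$ has quadratic variation $n\,dx$, so $\frac{1}{\sqrt n}p_1(\lambda(x))$ has quadratic variation $dx$). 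Together with $\frac{1}{\sqrt n}p_1(\lambda(0))=0$ this gives (1), i.e.\ $\frac{1}{\sqrt n}p_1(\lambda(\cdot))\stackrel{\mathrm{law}}{=}\phi_{\R_+}=\sqrt2 W$. For (2), I would apply It\^o to $\frac12 p_2(\lambda(x))=\frac12\sum_j\lambda_j(x)^2$: the martingale part is $\sqrt2\sum_j\lambda_j\,dW_j$ with quadratic variation $2\sum_j\lambda_j^2\,dx=4\cdot\frac12 p_2\,dx$, so it can be written as $2\sqrt{\frac12 p_2(\lambda(x))}\,d\widehat W(x)$ for a standard Brownian motion $\widehat W$; the drift is $n\,dx$ (from the $\frac12\sum_j 2\,dx$ It\^o correction, since $d\langle\lambda_j\rangle_x=2dx$) plus $\beta\sum_j\sum_{j'\neq j}\frac{\lambda_j}{\lambda_j-\lambda_{j'}}\,dx$; symmetrizing the last double sum in $j\leftrightarrow j'$ turns $\sum_{j\neq j'}\frac{\lambda_j}{\lambda_j-\lambda_{j'}}$ into $\frac12\sum_{j\neq j'}\frac{\lambda_j-\lambda_{j'}}{\lambda_j-\lambda_{j'}}=\frac{n(n-1)}{2}$, giving total drift $\big(n+n(n-1)\frac\beta2\big)dx=d(\beta,n)\,dx$. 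Hence $\frac12 p_2(\lambda(\cdot))$ solves the square Bessel SDE of dimension $d(\beta,n)$ from $0$, which is (2).

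For (3), I would argue that $\big(\lambda_j(x)-\frac1n p_1(\lambda(x))\big)_j$ satisfies an autonomous SDE not involving $p_1$: writing $\widetilde\lambda_j=\lambda_j-\frac1n p_1(\lambda)$, one has $d\widetilde\lambda_j=\sqrt2\,dW_j-\frac{\sqrt2}{n}\sum_{j'}dW_{j'}+\beta\sum_{j'\neq j}\frac{dx}{\widetilde\lambda_j-\widetilde\lambda_{j'}}$ (the drift is unchanged since $\lambda_j-\lambda_{j'}=\widetilde\lambda_j-\widetilde\lambda_{j'}$, and the extra drift contribution to $p_1$ was zero). The driving noise $\big(\sqrt2\,dW_j-\frac{\sqrt2}{n}\sum_{j'}dW_{j'}\big)_j$ is an orthogonal projection of the white noise onto the hyperplane $\sum_j y_j=0$, hence is independent of $\sum_{j'}dW_{j'}$, which drives $p_1(\lambda(\cdot))$. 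Since the pair of driving noises is independent and each process is a measurable functional of its own noise (pathwise uniqueness holds for \eqref{Eq b Dyson}, at least for $\beta\ge1$, and one can also invoke the Bessel-based construction; for general $\beta\ge0$ one can use the reflected-Bessel representation of collisions or a limiting/approximation argument), the two processes are independent, giving (3). Finally (4) follows by combining (2), (3) and the decomposition $\frac12\big(p_2(\lambda)-\frac1n p_1(\lambda)^2\big)=\frac12 p_2(\lambda)-\frac12 n\big(\frac1n p_1(\lambda)\big)^2\cdot\frac1{?}$—more precisely, $\frac12 p_2(\lambda(x))=\frac12 p_2\big(\widetilde\lambda(x)\big)+\frac{1}{2n}p_1(\lambda(x))^2$, where by (2) applied to the $n=1$ case $\frac{1}{2n}p_1(\lambda(x))^2=\frac12\big(\frac1{\sqrt n}p_1(\lambda(x))\big)^2$ is a square Bessel of dimension $1$ (it is $\frac12\phi_{\R_+}^2$), independent by (3) of $\frac12 p_2(\widetilde\lambda(\cdot))$; by the additivity of squared Bessel processes (Shiga--Watanabe) the sum of independent BESQ of dimensions $d(\beta,n)-1$ and $1$ is BESQ of dimension $d(\beta,n)$, forcing $\frac12 p_2(\widetilde\lambda(\cdot))=\frac12\big(p_2(\lambda)-\frac1n p_1(\lambda)^2\big)$ to be BESQ of dimension $d(\beta,n)-1$.

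The main obstacle is the independence claim (3) and the rigorous handling of the drift in It\^o's formula when $\beta\in[0,1)$, where two consecutive eigenvalues may collide: one must check that $p_2(\lambda(x))$, $p_1(\lambda(x))$ and the gap processes are genuine semimartingales with no extra local-time term at the collision set, and that the singular drift $\sum_{j'\neq j}\frac{dx}{\lambda_j-\lambda_{j'}}$ is integrable. This is exactly the point addressed by the cited works \cite{CepaLepingle97DysonBM,CepaLepingle07NoMultCol} and by the remark preceding the proposition (consecutive gaps behave like BES($\beta+1$) with $\beta+1>1$, so no principal value or local time is needed, and each $\lambda_j(\cdot)$ is a semimartingale); I would invoke these facts rather than reprove them. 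For the independence, the cleanest route is to observe that $p_1(\lambda(\cdot))$ depends only on $\sum_j W_j$ while $\widetilde\lambda(\cdot)$ can be realized as a strong solution depending only on the complementary (orthogonal) noise—using pathwise uniqueness of \eqref{Eq b Dyson} for $\beta\ge1$, the explicit construction \eqref{Eq b Dyson n=2 Bessel} for $n=2$, and for $\beta\in[0,1)$, $n\ge3$ either an approximation by $\beta\ge1$ or the known strong existence/uniqueness results in \cite{CepaLepingle97DysonBM}.
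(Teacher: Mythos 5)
Your proposal is correct and, for parts (1)--(3), follows essentially the same route as the paper: Itô's formula applied to $p_1$ and $p_2$, symmetrization of the singular drift to get $d(\beta,n)$, Lévy's characterization for the martingale parts, and for (3) the decomposition of the driving noise into $p_1(W)$ and its orthogonal complement together with pathwise uniqueness from \cite{CepaLepingle97DysonBM} to get measurability of $\lambda-\frac1n p_1(\lambda)$ with respect to the projected noise. The one genuine divergence is part (4): the paper computes $d\frac12\big(p_2(\lambda)-\frac1n p_1(\lambda)^2\big)$ directly by Itô (equation \eqref{Eq d p 2 c}) and reads off the BESQ$(d(\beta,n)-1)$ SDE, whereas you deduce it from (2), (3) and Shiga--Watanabe additivity. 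Your route works, but the step ``forcing'' $\frac12 p_2(\widetilde\lambda)$ to be BESQ$(d-1)$ is a deconvolution, not a direct application of additivity: you should say explicitly that the Laplace functionals $\langle\exp(-\int\mu\,\cdot)\rangle$ of the three processes multiply, that the BESQ Laplace functionals are non-vanishing, and hence that the quotient identifies the law of the independent summand as that of BESQ$(d-1)$; the paper's direct computation avoids this. Two small points: in (1) the parenthetical ``$\frac{1}{\sqrt n}p_1(\lambda(x))$ has quadratic variation $dx$'' is off by a factor of $2$ (it is $2\,dx$, matching $\phi_{\R_+}=\sqrt2\,W$ --- your final conclusion is nonetheless correct), and your caution about collisions for $\beta\in[0,1)$ is exactly the point the paper disposes of via the remark that consecutive gaps behave like reflected BES$(\beta+1)$ with $\beta+1>1$, so no local-time correction enters.
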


\begin{proof}
With Itô's formula, we get
\begin{displaymath}
d p_{1}(\lambda(x))=
\sqrt{2}\sum_{j=1}^{n}dW_{j}(x),
\end{displaymath}
\begin{displaymath}
d \frac{1}{2}p_{2}(\lambda(x))=
2\sum_{j=1}^{n}\dfrac{\lambda_{j}(x)}{\sqrt{2}}dW_{j}(x)
+d(\beta,n)dx,
\end{displaymath}
\begin{equation}
\label{Eq d p 2 c}
d\frac{1}{2}\Big(p_{2}(\lambda(x))-
\frac{1}{n}p_{1}(\lambda(x))^{2}\Big)=
2\sum_{j=1}^{n}
\dfrac{\lambda_{j}(x)-\frac{1}{n}p_{1}(\lambda(x))}
{\sqrt{2}}dW_{j}(x)
+(d(\beta,n)-1)dx,
\end{equation}
where the points $x\in\mathbb{R}_{+}$
for which $\lambda_{j}(x)=\lambda_{j-1}(x)$
for some $j\in\llbracket 2,n\rrbracket$ can be neglected.
This gives (1), (2) and (4) since the processes
\begin{displaymath}
d\widetilde{W}(x)=
\sum_{j=1}^{n}\dfrac{\lambda_{j}(x)}
{\sqrt{p_{2}(\lambda(x))}}dW_{j}(x),
~~\widetilde{W}(0)=0,
\end{displaymath}
and 
\begin{displaymath}
d\widecheck{W}(x)=
\sum_{j=1}^{n}\dfrac{\lambda_{j}(x)
-\frac{1}{n}p_{1}(\lambda(x))}
{\sqrt{p_{2}(\lambda(x))
-\frac{1}{n}p_{1}(\lambda(x))^{2}}}dW_{j}(x),
~~\widecheck{W}(0)=0,
\end{displaymath}
are both standard Brownian motions.
Again, one can neglect the points $x\in\mathbb{R}_{+}$
where $p_{2}(\lambda(x))
-\frac{1}{n}p_{1}(\lambda(x))^{2}=0$,
which only occur for $n=2$.

For (3), we have that
\begin{multline*}
d\Big(\lambda_{j}(x)-\dfrac{1}{n}p_{1}(\lambda(x))
\Big) = 
\sqrt{2} d\Big(W_{j}(x)-\frac{1}{n}p_{1}(W(x))\Big)
\\+ \beta \sum_{j'\neq j} 
\dfrac{dx}{\big(\lambda_{j}(x)
-\dfrac{1}{n}p_{1}(\lambda(x))\big)-
\big(\lambda_{j'}(x)
-\dfrac{1}{n}p_{1}(\lambda(x))\big)},
\end{multline*}
where
\begin{displaymath}
p_{1}(W(x)) = \sum_{j'=1}^{n} W_{j'}(x).
\end{displaymath}
The Brownian motion 
$p_{1}(W)=\frac{1}{\sqrt{2}}p_{1}(\lambda)$ 
is independent from the family of Brownian motions
$\big(W_{j}-\frac{1}{n}p_{1}(W)\big)_{1\leq j\leq n}$.
Further, the measurability of
$\big(\lambda_{j}-\frac{1}{n}p_{1}(\lambda)\big)_{1\leq j\leq n}$
with respect to
$\big(W_{j}-\frac{1}{n}p_{1}(W)\big)_{1\leq j\leq n}$
follows from the pathwise uniqueness of the solution to
\eqref{Eq b Dyson}; see \cite[Theorem~3.1]{CepaLepingle97DysonBM}.
\end{proof}

By combining Proposition \ref{Prop b Dyson} 
with Theorem \ref{Thm loop soup},
we get a first relation between the
$\beta$-Dyson's Brownian motion and 1D Brownian local times.
Compare it with Le Jan's isomorphism \eqref{Iso Le Jan 1D}.

\begin{cor}
\label{Cor Le Jan Dyson}
The process $\big(\frac{1}{2}p_{2}(\lambda(x))\big)_{x\geq 0}$
has the same law as the occupation field
$(L^{x}(\calL^{\alpha}_{\R_{+}}))_{x\geq 0}$
of a 1D Brownian loop soup $\calL^{\alpha}_{\R_{+}}$,
with the correspondence
\begin{equation}
\label{Eq alpha d b n}
2\alpha = d(\beta,n)=n+n(n-1)\dfrac{\beta}{2}.
\end{equation}
Further, let
$\calL^{\alpha-\frac{1}{2}}_{\R_{+}}$ and
$\widetilde{\calL}^{\frac{1}{2}}_{\R_{+}}$
be two independent 1D Brownian loop soups,
$\alpha$ still given by \eqref{Eq alpha d b n}.
Then, one has the following identity in law between pairs of processes:
\begin{displaymath}
\Big(\frac{1}{2}\Big(p_{2}(\lambda(x))
-\frac{1}{n}p_{1}(\lambda(x))^{2}\Big),
\frac{1}{2n}p_{1}(\lambda(x))^{2}\Big)_{x\geq 0}
\stackrel{\text{(law)}}{=}
(L^{x}(\calL^{\alpha-\frac{1}{2}}_{\R_{+}}),
L^{x}(\widetilde{\calL}^{\frac{1}{2}}_{\R_{+}}))_{x\geq 0}.
\end{displaymath}
\end{cor}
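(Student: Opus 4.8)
The plan is to obtain Corollary \ref{Cor Le Jan Dyson} by combining the two parts of Proposition \ref{Prop b Dyson} with Theorem \ref{Thm loop soup}, identifying the relevant squared Bessel processes on both sides. First I would recall from part (2) of Proposition \ref{Prop b Dyson} that $(\tfrac12 p_2(\lambda(x)))_{x\geq 0}$ is a squared Bessel process of dimension $d(\beta,n)$ started from $0$. On the other side, Theorem \ref{Thm loop soup} asserts that $(L^x(\calL^\alpha_{\R_+}))_{x\geq 0}$ is a squared Bessel process of dimension $2\alpha$ started from $0$, being the unique solution of the SDE $dL^x = 2(L^x)^{1/2}dW(x) + 2\alpha\,dx$ with $L^0 = 0$. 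Hence setting $2\alpha = d(\beta,n) = n + n(n-1)\tfrac{\beta}{2}$, which is exactly \eqref{Eq alpha d b n}, gives equality in law of the two processes, since a squared Bessel process of given nonnegative dimension started from $0$ is characterized by its dimension. This handles the first assertion.

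For the second assertion, I would use parts (3) and (4) of Proposition \ref{Prop b Dyson}. Part (4) says $\big(\tfrac12(p_2(\lambda(x)) - \tfrac1n p_1(\lambda(x))^2)\big)_{x\geq 0}$ is a squared Bessel process of dimension $d(\beta,n) - 1 = 2\alpha - 1$ started from $0$. By part (1), $\tfrac{1}{\sqrt n}p_1(\lambda(x))$ has the law of $\phi_{\R_+} = \sqrt 2 W$, so $\tfrac{1}{2n}p_1(\lambda(x))^2 = \tfrac12\phi_{\R_+}(x)^2$ in law, and by Le Jan's isomorphism \eqref{Iso Le Jan 1D} this equals in law $(L^x(\widetilde\calL^{1/2}_{\R_+}))_{x\geq 0}$. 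Moreover part (3) tells us that the processes $(p_1(\lambda(x)))_{x\geq 0}$ and $(\lambda(x) - \tfrac1n p_1(\lambda(x)))_{x\geq 0}$ are independent; since $\tfrac12(p_2(\lambda) - \tfrac1n p_1(\lambda)^2) = \tfrac12 p_2(\lambda - \tfrac1n p_1(\lambda))$ is a functional of the second process and $\tfrac{1}{2n}p_1(\lambda)^2$ is a functional of the first, the two components of the left-hand side pair are independent. On the right-hand side, $\calL^{\alpha - 1/2}_{\R_+}$ and $\widetilde\calL^{1/2}_{\R_+}$ are independent loop-soups, so the two occupation fields are independent, each being a squared Bessel process of dimension $2(\alpha - \tfrac12) = 2\alpha - 1$ and of dimension $1$ respectively. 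Matching dimensions componentwise and invoking independence on both sides yields the claimed identity in law between pairs of processes.

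The only delicate point is the additivity/characterization of squared Bessel processes that underlies both halves: a squared Bessel process of dimension $\delta \geq 0$ started from $0$ has a law determined by $\delta$, and the sum of two independent such processes of dimensions $\delta_1,\delta_2$ is again one of dimension $\delta_1 + \delta_2$ (Pitman–Yor additivity). For the pair identity one does not even need additivity, only the componentwise matching together with independence on each side; additivity would merely give a consistency check that summing the two components reproduces the dimension-$d(\beta,n)$ process of the first assertion. I would make sure to note that for $n = 2$ the exceptional set where $p_2(\lambda(x)) - \tfrac1n p_1(\lambda(x))^2 = 0$ is negligible, exactly as was already handled in the proof of Proposition \ref{Prop b Dyson}, so the SDE characterization applies without modification. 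I expect the main (though minor) obstacle to be making the independence argument fully rigorous, i.e. carefully citing the measurability of $\lambda - \tfrac1n p_1(\lambda)$ with respect to the recentered white noises from Theorem 3.1 in \cite{CepaLepingle97DysonBM}, which is already invoked in Proposition \ref{Prop b Dyson}(3).
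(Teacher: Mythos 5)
Your proposal is correct and follows essentially the same route as the paper, which derives the corollary precisely by combining parts (1)--(4) of Proposition \ref{Prop b Dyson} with the squared Bessel / permanental description of the loop-soup occupation fields in Theorem \ref{Thm loop soup} and the Le Jan isomorphism \eqref{Iso Le Jan 1D}. The componentwise matching of Bessel dimensions together with the independence statements on both sides is exactly the intended argument, and your remarks on the $n=2$ exceptional set and on measurability are consistent with how these points are already handled in the proof of Proposition \ref{Prop b Dyson}.
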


\subsection{Symmetric moments of $\beta$-Dyson's Brownian motion}
\label{Sec mome}

We will denote by $\langle\cdot\rangle_{\beta,n}^{\R_{+}}$
the expectation with respect to the
$\beta$-Dyson's Brownian motion \eqref{Eq b Dyson}.
This section will be devoted to deriving a recursive way to express the symmetric moments
\begin{equation}
\label{Eq mome}
\Big\langle \prod_{k=1}^{m(\nu)}
p_{\nu_{k}}(\lambda(x_{k}))
\Big\rangle_{\beta,n}^{\R_{+}} 
\end{equation}
for $\nu$ be a finite family of positive integers
with $\vert\nu\vert$ even
and $x_{1}\leq x_{2}\leq\dots\leq x_{m(\nu)}\in\R_{+}$.
This generalizes the Schwinger-Dyson equation \eqref{Eq SD beta}.
Note that if $\vert\nu\vert$ is odd then the moment equals $0$.

We will also use in the sequel the following notation. 
For $k\geq k'\in\N$, 
$\llbracket k,k'\rrbracket$ will denote the interval of integers
\begin{displaymath}
\llbracket k,k'\rrbracket =
\{ k,k+1,\dots,k'\}.
\end{displaymath}

We start by some lemmas.

\begin{lemma}
\label{Lem d p k}
Let $q\geq 3$. Then
\begin{eqnarray*}
dp_{q}(\lambda(x))&=&
q\sqrt{2}\sum_{j=1}^{n} \lambda_{j}(x)^{q-1}
dW_{j}(x) + 
\dfrac{\beta}{2} q \sum_{i=2}^{q-2} 
p_{i-1}(\lambda(x))
p_{q-1-i}(\lambda(x)) dx 
\\ && + 2 \dfrac{\beta}{2}n q p_{q-2}(\lambda(x)) dx
+\Big(1-\dfrac{\beta}{2}\Big)q(q-1)p_{q-2}(\lambda(x)) dx.
\end{eqnarray*}
\end{lemma}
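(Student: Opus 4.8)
The plan is to obtain the identity by Itô's formula applied to the symmetric polynomial $p_{q}(\lambda(x))=\sum_{j=1}^{n}\lambda_{j}(x)^{q}$, the interaction drift of \eqref{Eq b Dyson} being then rewritten through a standard antisymmetrisation. First I would recall, as discussed above, that for $\beta>0$ each coordinate $\lambda_{j}$ is a semimartingale, the SDE \eqref{Eq b Dyson} holds away from collisions, the collision set has zero Lebesgue measure, and no local-time-at-$0$ term appears because the spacings behave near $0$ like Bessel processes of dimension $\beta+1>1$; for $\beta=0$ one instead uses that $\lambda(x)/\sqrt{2}$ is a reordering of $n$ i.i.d.\ standard Brownian motions, so that $p_{q}(\lambda(x))=2^{q/2}\sum_{j}B_{j}(x)^{q}$ with the $B_{j}$ independent and Itô's formula applies directly. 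In either case, applying Itô to $\sum_{j}\lambda_{j}^{q}$ and using $d\langle\lambda_{j}\rangle_{x}=2\,dx$ and $d\langle\lambda_{j},\lambda_{j'}\rangle_{x}=0$ for $j\neq j'$, one gets the martingale term $q\sqrt{2}\sum_{j}\lambda_{j}^{q-1}dW_{j}$, the Itô correction $\tfrac12 q(q-1)\cdot 2\cdot\sum_{j}\lambda_{j}^{q-2}\,dx=q(q-1)p_{q-2}(\lambda)\,dx$, and the interaction drift $q\beta\sum_{j\neq j'}\lambda_{j}^{q-1}/(\lambda_{j}-\lambda_{j'})\,dx$.

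The one substantive step is the evaluation of this interaction drift as a polynomial in the power sums. Antisymmetrising in $(j,j')$ gives
\[
\sum_{j\neq j'}\frac{\lambda_{j}^{q-1}}{\lambda_{j}-\lambda_{j'}}
=\frac12\sum_{j\neq j'}\frac{\lambda_{j}^{q-1}-\lambda_{j'}^{q-1}}{\lambda_{j}-\lambda_{j'}}
=\frac12\sum_{j\neq j'}\sum_{a=0}^{q-2}\lambda_{j}^{a}\lambda_{j'}^{q-2-a},
\]
and since $\sum_{j\neq j'}\lambda_{j}^{a}\lambda_{j'}^{q-2-a}=p_{a}(\lambda)p_{q-2-a}(\lambda)-p_{q-2}(\lambda)$, with the convention $p_{0}(\lambda)=n$, I would separate the two endpoint terms $a=0$ and $a=q-2$ (which together contribute $2n\,p_{q-2}(\lambda)$) and re-index the range $1\le a\le q-3$ by $i=a+1$, obtaining
\[
q\beta\sum_{j\neq j'}\frac{\lambda_{j}^{q-1}}{\lambda_{j}-\lambda_{j'}}
=\frac{\beta}{2}q\sum_{i=2}^{q-2}p_{i-1}(\lambda)p_{q-1-i}(\lambda)
+\beta\,n\,q\,p_{q-2}(\lambda)-\frac{\beta}{2}q(q-1)\,p_{q-2}(\lambda).
\]

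Finally I would add back the Itô correction $q(q-1)p_{q-2}(\lambda)$ and regroup, using $q(q-1)-\tfrac{\beta}{2}q(q-1)=(1-\tfrac{\beta}{2})q(q-1)$ and $\beta nq=2\cdot\tfrac{\beta}{2}nq$, which produces exactly the drift in the statement. The main point requiring care is the first paragraph---justifying that Itô's formula applies to $p_{q}(\lambda(x))$ without any singular contribution at collision times---whereas the remainder is the routine algebraic identity above; note also that after antisymmetrisation the interaction drift is a genuine polynomial in the $\lambda_{j}$, hence locally bounded and $x$-integrable, which is precisely why the collision set is harmless even when the individual spacings hit $0$.
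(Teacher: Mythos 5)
Your proof is correct and follows essentially the same route as the paper: Itô's formula for $p_q(\lambda(x))$, followed by symmetrisation of the interaction drift into $\sum_{j<j'}\frac{\lambda_j^{q-1}-\lambda_{j'}^{q-1}}{\lambda_j-\lambda_{j'}}=\sum_{j<j'}\sum_r\lambda_j^r\lambda_{j'}^{q-2-r}$ and re-expression in power sums, which is exactly the paper's computation. Your extra care about the applicability of Itô's formula at collision times is a justification the paper handles earlier in the section rather than inside this proof, but it does not change the argument.
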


\begin{proof}
By Itô's formula,
\begin{multline*}
dp_{q}(\lambda(x)) = 
q\sqrt{2}\sum_{j=1}^{n} \lambda_{j}(x)^{q-1}
dW_{j}(x) +
q(q-1)p_{q-2}(\lambda(x)) dx
\\+ \beta q \sum_{1\leq j<j'\leq n}
\dfrac{\lambda_{j}(x)^{q-1}-\lambda_{j'}(x)^{q-1}}{\lambda_{j}(x)-\lambda_{j'}(x)} dx.
\end{multline*}
But
\begin{multline*}
\sum_{1\leq j<j'\leq n}
\dfrac{\lambda_{j}(x)^{q-1}-\lambda_{j'}(x)^{q-1}}{\lambda_{j}(x)-\lambda_{j'}(x)} =
\sum_{1\leq j<j'\leq n}\sum^{q-2}_{r=0}
\lambda_{j}(x)^{r}\lambda_{j'}(x)^{q-2-r} 
\\
=\Big(n - \dfrac{q-1}{2}\Big)p_{q-2}(\lambda(x))+
\dfrac{1}{2} \sum_{i=2}^{q-2} 
p_{i-1}(\lambda(x))
p_{q-1-i}(\lambda(x)).
\qedhere
\end{multline*}
\end{proof}

\begin{lemma}
\label{Lem quad var k k prim}
Let $q,q'\geq 1$ with $q+q'>2$. Then
\begin{displaymath}
d\langle p_{q}(\lambda(x)),p_{q'}(\lambda(x))\rangle
= 2 q q' p_{q+q'-2}(\lambda(x)) dx.
\end{displaymath}
Moreover,
\begin{displaymath}
d\langle p_{1}(\lambda(x)),p_{1}(\lambda(x))\rangle
= 2 n dx.
\end{displaymath}
\end{lemma}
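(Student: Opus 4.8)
The plan is to read off the continuous local martingale parts of the semimartingales $p_{q}(\lambda(x))$ and compute their quadratic covariation directly. From the defining SDE \eqref{Eq b Dyson}, the continuous local martingale part of $\lambda_{j}(x)$ is $\sqrt{2}\,W_{j}(x)$, and since the $W_{j}$ are independent one-dimensional Brownian motions we have $d\langle \lambda_{j}(x),\lambda_{j'}(x)\rangle = 2\,\delta_{jj'}\,dx$. By Itô's formula the continuous local martingale part of $p_{q}(\lambda(x))=\sum_{j=1}^{n}\lambda_{j}(x)^{q}$ is
\[
q\sqrt{2}\sum_{j=1}^{n}\lambda_{j}(x)^{q-1}\,dW_{j}(x) ;
\]
for $q\geq 3$ this is precisely the martingale term in Lemma \ref{Lem d p k}, for $q=1$ it is $\sqrt{2}\sum_{j}dW_{j}(x)$ and for $q=2$ it is $2\sqrt{2}\sum_{j}\lambda_{j}(x)\,dW_{j}(x)$, both already recorded in the proof of Proposition \ref{Prop b Dyson}. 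As there, the Lebesgue-null set of times $x$ at which $\lambda_{j}(x)=\lambda_{j-1}(x)$ for some $j\in\llbracket 2,n\rrbracket$ can be ignored; since by \cite{CepaLepingle07NoMultCol} there is never a collision of three or more particles, each $\lambda_{j}$ is a genuine semimartingale and these manipulations are legitimate.

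The quadratic covariation of two continuous semimartingales depends only on their local martingale parts, so
\[
d\langle p_{q}(\lambda(x)),p_{q'}(\lambda(x))\rangle
= q q'\cdot 2\sum_{j=1}^{n}\lambda_{j}(x)^{q-1}\lambda_{j}(x)^{q'-1}\,dx
= 2 q q'\sum_{j=1}^{n}\lambda_{j}(x)^{q+q'-2}\,dx .
\]
When $q+q'>2$ the exponent $q+q'-2$ is a positive integer, so the right-hand side equals $2qq'\,p_{q+q'-2}(\lambda(x))\,dx$, which is the first assertion. When $q=q'=1$ the sum $\sum_{j=1}^{n}\lambda_{j}(x)^{0}$ is just $n$, consistently with the convention $p_{0}(\lambda)=n$, and one obtains $d\langle p_{1}(\lambda(x)),p_{1}(\lambda(x))\rangle = 2n\,dx$, the second assertion.

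There is essentially no obstacle beyond bookkeeping here: the only points requiring a word of care are that the expression for the martingale part of $p_{q}$ is uniform in $q\geq 1$ (so the cases $q\in\{1,2\}$ need not be treated by \emph{ad hoc} computation), and that the final identity is an identity of absolutely continuous processes, so modifying the integrand on the null set of collision times is harmless.
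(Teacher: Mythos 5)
Your computation is correct and is precisely the ``straightforward computation'' the paper's proof alludes to without writing out: read off the local martingale parts $q\sqrt{2}\sum_{j}\lambda_{j}^{q-1}dW_{j}$ from Itô's formula and use the independence of the $W_{j}$ to get $2qq'\sum_{j}\lambda_{j}^{q+q'-2}\,dx$. The remarks about collision times and the $q=q'=1$ case matching the convention $p_{0}(\lambda)=n$ are accurate and consistent with how the paper handles these points elsewhere.
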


\begin{proof}
This is a straightforward computation.
\end{proof}

\begin{lemma}
\label{Lem true mart}
Let $\nu$ be a finite family of positive integers
and let $q\geq 0$. Then the process
\begin{equation}
\label{Eq loc mart}
\int_{0}^{x}p_{\nu}(\lambda(y))
\sum_{j=1}^{n} \lambda_{j}(y)^{q}
dW_{j}(y)
\end{equation}
is a martingale in the filtration of the
Brownian motions $((W_{j}(x))_{1\leq j\leq n})_{x\geq 0}$.
\end{lemma}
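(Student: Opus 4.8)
The plan is to show that the local martingale in \eqref{Eq loc mart} is in fact a true martingale by verifying a uniform $L^{2}$ bound on compact time intervals, which reduces to controlling polynomial moments of the $\beta$-Dyson's Brownian motion. First I would observe that the process in \eqref{Eq loc mart} is a continuous local martingale, being a stochastic integral against the Brownian motions $(W_{j})_{1\leq j\leq n}$, with the points where two coordinates collide being Lebesgue-negligible (as recalled in the discussion following \eqref{Eq b Dyson} and used already in the proof of Proposition \ref{Prop b Dyson}). Its quadratic variation is
\begin{displaymath}
2\int_{0}^{x}p_{\nu}(\lambda(y))^{2}
\sum_{j=1}^{n}\lambda_{j}(y)^{2q}\,dy
\leq 2\int_{0}^{x}p_{\nu}(\lambda(y))^{2}\,p_{2q}(\lambda(y))\,dy,
\end{displaymath}
so by the Burkholder–Davis–Gundy inequality it suffices to bound $\E[\int_{0}^{x}p_{\nu}(\lambda(y))^{2}p_{2q}(\lambda(y))\,dy]$, and in fact to bound $\E[p_{\mu}(\lambda(y))]$ uniformly for $y$ in a compact interval, where $\mu$ is the concatenated multi-index obtained from two copies of $\nu$ together with the single part $2q$ (all entries positive integers).

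The key step is therefore a moment bound: for any finite family $\mu$ of positive integers, $\sup_{0\leq y\leq x}\E[\,|p_{\mu}(\lambda(y))|\,]<+\infty$. Here I would use the scaling identity already recorded in the text: for $y>0$, the vector $\lambda(y)/\sqrt{2y}$ is distributed as a G$\beta$E up to reordering, hence $\E[|p_{\mu}(\lambda(y))|] = (2y)^{|\mu|/2}\,\langle |p_{\mu}(\lambda)|\rangle_{\beta,n}$, and the right-hand side is finite because the G$\beta$E density \eqref{Eq GbE} has Gaussian tails (the factor $e^{-\frac{1}{2}p_{2}(\lambda)}$ dominates any polynomial and the Vandermonde factor $|D(\lambda)|^{\beta}$ for $\beta\geq 0$). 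Since $y\mapsto (2y)^{|\mu|/2}$ is bounded on $[0,x]$, this gives the required uniform bound. Plugging this into the BDG estimate shows that the local martingale \eqref{Eq loc mart} is bounded in $L^{2}$ on $[0,x]$ for every $x$, hence is a genuine martingale (a continuous local martingale that is $L^{2}$-bounded on compacts is a true martingale).

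The main obstacle is purely a matter of integrability bookkeeping rather than a conceptual difficulty: one must make sure the moment $\langle|p_{\mu}(\lambda)|\rangle_{\beta,n}$ is finite, which, as noted, is immediate from the Gaussian weight in \eqref{Eq GbE} for $\beta\geq 0$; and one must handle the colliding configurations, which as in Proposition \ref{Prop b Dyson} form a time set of zero Lebesgue measure and therefore do not affect either the Itô integral or its quadratic variation. An alternative, if one prefers to avoid invoking the G$\beta$E marginal, is to establish $\sup_{0\leq y\leq x}\E[p_{2r}(\lambda(y))]<+\infty$ directly by induction on $r$: applying Lemma \ref{Lem d p k} and taking expectations gives a differential inequality for $\E[p_{2r}(\lambda(y))]$ whose inhomogeneous terms are lower even power sums already controlled by the induction hypothesis, and Grönwall's lemma then closes the estimate; a Cauchy–Schwarz step reduces general $\E[|p_{\mu}(\lambda(y))|]$ to the even case $\E[p_{2r}(\lambda(y))]$.
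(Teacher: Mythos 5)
Your proposal is correct and follows essentially the same route as the paper: identify \eqref{Eq loc mart} as a continuous local martingale, bound the expectation of its quadratic variation via the scaling identity $\lambda(y)/\sqrt{2y}\sim\,$G$\beta$E (which reduces everything to finiteness of a fixed G$\beta$E moment times $(2y)^{|\nu|+q}$), and conclude that an $L^{2}$-bounded-on-compacts local martingale is a true martingale. The only cosmetic differences are that $\sum_{j}\lambda_{j}(y)^{2q}=p_{2q}(\lambda(y))$ is an equality rather than an inequality, and that the appeal to Burkholder--Davis--Gundy is unnecessary once $\E[\langle M,M\rangle_{x}]<+\infty$ is established.
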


\begin{proof}
The process \eqref{Eq loc mart} is a local martingale.
Its quadratic variation is given by
\begin{displaymath}
\int_{0}^{x}
p_{\nu}(\lambda(y))^{2}
p_{2q}(\lambda(y)) dy .
\end{displaymath}
For every $y>0$, 
$\lambda(y)/\sqrt{2y}$
follows a fixed distribution,
which is up to reordering the G$\beta$E \eqref{Eq GbE}.
Thus,
\begin{displaymath}
\Big\langle \int_{0}^{x}
p_{\nu}(\lambda(y))^{2}
p_{2q}(\lambda(y)) dy\Big\rangle^{\R_{+}}_{\beta,n}
=
\langle p_{\nu}(\lambda)^{2}
p_{2q}(\lambda)\rangle_{\beta,n}
\int_{0}^{x}(2y)^{\vert\nu\vert + q} dy
< +\infty .
\end{displaymath}
So the quadratic variation is locally bounded in $\mathbb{L}^{1}$. 
It follows that \eqref{Eq loc mart} is a true martingale.
\end{proof}

Let $\nu$ be a finite family of positive integers.
and let $x_{1}\leq x_{2}\leq\dots\leq x_{m(\nu)}\in\R_{+}$.
For
$k\in\llbracket 1,m(\nu)\rrbracket$
and $x\geq x_{k-1}$,
let $f_{k}(x)$ denote the function
\begin{equation}
\label{Eq f k}
f_{k}(x):= 
\Big\langle \prod_{k'=1}^{k-1}
p_{\nu_{k'}}(\lambda(x_{k'}))
\prod_{k'=k}^{m(\nu)}
p_{\nu_{k'}}(\lambda(x))
\Big\rangle_{\beta,n}^{\R_{+}}.
\end{equation}
The main idea for expressing a symmetric moment \eqref{Eq mome}
is that for $x\geq x_{k-1}$,
the derivative $f'_{k}(x)$
is a linear combination of symmetric moments
of degree $\vert\nu\vert - 2$,
with coefficients depending on $\beta$ and $n$.
The precise expressions for these coefficients can be deduced from
Lemmas \ref{Lem d p k} and \ref{Lem quad var k k prim}.
Further, the moment \eqref{Eq mome}
equals $f_{m(\nu)}(x_{m(\nu)})$,
for every $k\in\llbracket 2,m(\nu)\rrbracket$,
$f_{k}(x_{k-1})=f_{k-1}(x_{k-1})$,
and
\begin{displaymath}
f_{1}(x_{1})= (2x_{1})^{\vert\nu\vert/2}
\langle p_{\nu}(\lambda)\rangle_{\beta,n},
\end{displaymath}
where $\langle p_{\nu}(\lambda)\rangle_{\beta,n}$ 
is the moment of the G$\beta$E, given by
Proposition \ref{Prop SD beta}.
So given the above initial conditions, and knowing the derivatives
$f'_{k}(x)$ one gets the moment \eqref{Eq mome}.
It turns out that this moment is a multivariate polynomial
in $(x_{k})_{1\leq k\leq m(\nu)}$.
Next we describe the recursion for this polynomial.

Let $(\Y_{kk})_{k\geq 1}$ denote a family of formal
commuting polynomials variables.
We will consider finite families of positive integers
$\nu=(\nu_{1},\nu_{2},\dots ,\nu_{m(\nu)})$
with $\vert\nu\vert$ even. 
The order of the $\nu_{k}$ will matter.
That is to say  we distinguish between $\nu$ and
$(\nu_{\sigma(1)},\nu_{\sigma(2)},\dots ,\nu_{\sigma(m(\nu))})$
for $\sigma$ a permutation of $\llbracket 1,m(\nu)\rrbracket$.
We want to construct a family of formal polynomials
$Q_{\nu,\beta,n}$ with parameters
$\nu$,$\beta$ and $n$, 
where $Q_{\nu,\beta,n}$ has for variables
$(\Y_{kk})_{1\leq k\leq m(\nu)}$.
To simplify the notations, 
we will drop the subscripts $\beta,n$ and just write $Q_{\nu}$.
The polynomials $Q_{\nu}$ will appear in the expression of the symmetric moments \eqref{Eq mome}.
We will denote by $c(\nu,\beta,n)$
the solutions to the recurrence
\eqref{Eq SD beta}, 
which for $\beta\in (-2/n,+\infty)$ are the moments 
$\langle p_{\nu}(\lambda)\rangle_{\beta,n}$.
By convention, $c((0),\beta,n)=n$
and $c(\emptyset,\beta,n)=1$.
For $k\geq 1$ and $Q$ a polynomial,
$Q^{k\leftarrow}$ will denote the polynomial in the variables
$(\Y_{k'k'})_{1\leq k'\leq k}$,
obtained from $Q$ by replacing each variable
$\Y_{k'k'}$ with $k'\geq k+1$
by the variable $\Y_{kk}$.
Note that $Q_{\nu}^{m(\nu)\leftarrow}=Q_{\nu}$
and that $Q_{\nu}^{1\leftarrow}$ is 
an univariate polynomial in $\Y_{11}$.
For $\Y$ a formal polynomial variable, 
$\deg_{\Y}$ will denote the partial degree in $\Y$.

\begin{defn}
\label{Def Rec Pol}
The family of polynomials $(Q_{\nu})_{\vert\nu\vert \text{ even}}$
is defined by the following.
\begin{enumerate}
\item $Q_{\nu}^{1\leftarrow}
=c(\nu,\beta,n)\Y_{1 1}^{\vert\nu\vert /2}$.
\item If $m(\nu)\geq 2$, 
then for every $k\in \llbracket 2,m(\nu)\rrbracket$,
\begin{eqnarray}
\label{Eq rec poly}
\dfrac{\partial}{\partial \Y_{k k}}
Q_{\nu}^{k\leftarrow}
&=&
\dfrac{\beta}{2}
\sum_{\substack{k\leq k'\leq m(\nu)\\\nu_{k'}>2}}
\dfrac{\nu(k')}{2}
\sum_{i=2}^{\nu_{k'}-2}
Q_{((\nu_{r})_{r\neq k'},i-1,\nu_{k'}-1-i)}
^{k\leftarrow}
\\
\nonumber
&&+
\dfrac{\beta}{2}n
\sum_{\substack{k\leq k'\leq m(\nu)\\\nu_{k'}>2}}
\nu(k')
Q_{((\nu_{r})_{r\neq k'},\nu_{k'}-2)}
^{k\leftarrow}
\\
\nonumber
&&+
\dfrac{\beta}{2}n^{2}
\sum_{\substack{k\leq k'\leq m(\nu)\\\nu_{k'}=2}}
Q_{(\nu_{r})_{r\neq k'}}
^{k\leftarrow}
\\
\nonumber
&&+
\Big(
1-\dfrac{\beta}{2}
\Big)
\sum_{\substack{k\leq k'\leq m(\nu)\\\nu_{k'}>2}}
\dfrac{\nu_{k'}(\nu_{k'}-1)}{2}
Q_{((\nu_{r})_{r\neq k'},\nu_{k'}-2)}
^{k\leftarrow}
\\
\nonumber
&&+
\Big(
1-\dfrac{\beta}{2}
\Big)
n
\sum_{\substack{k\leq k'\leq m(\nu)\\\nu_{k'}=2}}
Q_{(\nu_{r})_{r\neq k'}}
^{k\leftarrow}
\\
\nonumber
&&+
\sum_{\substack{k\leq k'<k''\leq m(\nu)\\\nu_{k'}+\nu_{k''}>2}}
\nu_{k'}\nu_{k''}
Q_{((\nu_{r})_{r\neq k',k''},\nu_{k'}+\nu_{k''}-2)}
^{k\leftarrow}
\\
\nonumber
&&+ n
\sum_{\substack{k\leq k'<k''\leq m(\nu)\\\nu_{k'}=\nu_{k''}=1}}
Q_{(\nu_{r})_{r\neq k',k''}}
^{k\leftarrow}.
\end{eqnarray}
If $k=m(\nu)$, then the last two lines of
\eqref{Eq rec poly} vanish.
\end{enumerate}
\end{defn}

Note that since the polynomials $Q_{\nu,\beta,n}$
are formal, one is not restricted by a specific range for $\beta$.
One could take any $\beta\in\mathbb{C}$ or even consider $\beta$ as a formal parameter. The specific range for $\beta$ will only matter when relating $Q_{\nu,\beta,n}$ to the symmetric moments of the
$\beta$-Dyson's Brownian motion.

\begin{prop}
\label{Prop defn Q}
Definition \ref{Def Rec Pol} uniquely defines a family of polynomials
$(Q_{\nu})_{\vert\nu\vert \text{ even}}$.
Moreover, the following properties hold.
\begin{enumerate}
\item For every $A$ monomial of $Q_{\nu}$
and every $k\in \llbracket 2,m(\nu)\rrbracket$,
\begin{equation}
\label{Eq monomial}
2\sum_{k\leq k'\leq m(\nu)}\deg_{\Y_{k' k'}}A
\leq\sum_{k\leq k'\leq m(\nu)}\nu_{k'},
\end{equation}
and
\begin{displaymath}
2\sum_{1\leq k'\leq m(\nu)}\deg_{\Y_{k' k'}}A = \vert\nu\vert.
\end{displaymath}
In particular, $Q_{\nu}$ is a homogeneous
polynomial of degree $\vert\nu\vert/2$.
\item For every $k\in \llbracket 1,m(\nu)\rrbracket$ and
every permutation $\sigma$ of 
$\llbracket k,m(\nu)\rrbracket$,
\begin{displaymath}
Q_
{(\nu_{r})_{1\leq r\leq k-1},
(\nu_{\sigma(r)})_{k\leq r\leq m(\nu)}}^{k\leftarrow}
=Q_{\nu}^{k\leftarrow}.
\end{displaymath}
\end{enumerate}
\end{prop}

\begin{proof}
The fact that the polynomials $Q_{\nu}$ are well defined can be proved
by induction on $\vert\nu\vert/2$.

For $\vert\nu\vert/2=1$, there are only two polynomials,
$Q_{(2)}$ and $Q_{(1,1)}$.
According to the condition (1),
\begin{displaymath}
Q_{(2)} = c((2),\beta,n)\Y_{11} = d(\beta,n)\Y_{11}
=\Big(\dfrac{\beta}{2}n^{2}+\Big(1-\dfrac{\beta}{2}\Big)n\Big)\Y_{11}.
\end{displaymath}
The condition (2) does not apply for $Q_{(2)}$.
For $Q_{(1,1)}$, according to the condition (2),
\begin{displaymath}
\dfrac{\partial}{\partial \Y_{22}} Q_{(1,1)} = 0.
\end{displaymath}
Thus, $Q_{(1,1)}$ contains no terms in $\Y_{22}$
and $Q_{(1,1)}=Q_{(1,1)}^{1\leftarrow}$.
From the condition (1) we further get
\begin{displaymath}
Q_{(1,1)} = c((1,1),\beta,n)\Y_{11} = n \Y_{11}.
\end{displaymath}

The induction step works as follows.
Assume $\vert\nu\vert/2\geq 2$.
The right hand side of \eqref{Eq rec poly}
involves only families of integers $\tilde{\nu}$ with
$\vert\tilde{\nu}\vert=\vert\nu\vert-2$.
According to the induction hypotheses,
$\dfrac{\partial}{\partial \Y_{k k}}
Q_{\nu}^{k\leftarrow}$ is uniquely determined
for every $k\in \llbracket 2,m(\nu)\rrbracket$.
Thus, for every $k\in \llbracket 2,m(\nu)\rrbracket$,
$Q_{\nu}^{k\leftarrow}-Q_{\nu}^{k\leftarrow}(\Y_{kk}=0)$
is uniquely determined. 
On top of that,
\begin{displaymath}
Q_{\nu}^{k\leftarrow}(\Y_{kk}=0)
=Q_{\nu}^{k-1\leftarrow} -
\big(Q_{\nu}^{k\leftarrow}-Q_{\nu}^{k\leftarrow}(\Y_{kk}=0)\big)
^{k-1\leftarrow}
.
\end{displaymath}
Moreover, by the condition (1), 
$Q_{\nu}^{1\leftarrow}$ is also uniquely determined.
Thus, all the polynomials
$(Q_{\nu}^{k\leftarrow})_{1\leq k\leq m(\nu)}$
are uniquely determined, with consistency by the
$Q\mapsto Q^{k\leftarrow}$ operations.
Finally, $Q_{\nu} = Q_{\nu}^{m(\nu)\leftarrow}$.

The properties (1) and (2) again follow easily by induction on
$\vert\nu\vert/2$.
\end{proof}

We are ready now to express the symmetric moments
\eqref{Eq mome}.

\begin{prop}
\label{Prop Mom b Dyson}
Let $\beta\geq 0$.
Let $\nu$ be a finite family of positive integers,
with $\vert\nu\vert$ even.
Let $Q_{\nu}=Q_{\nu,\beta,n}$
be the polynomial given by Definition \ref{Def Rec Pol}.
Let $x_{1}\leq x_{2}\leq\dots\leq x_{m(\nu)}\in\R_{+}$.
Then,
\begin{displaymath}
\Big\langle \prod_{k=1}^{m(\nu)}
p_{\nu_{k}}(\lambda(x_{k}))
\Big\rangle_{\beta,n}^{\R_{+}} = 
Q_{\nu}((\Y_{kk}=2x_{k})_{1\leq k\leq m(\nu)}).
\end{displaymath}
\end{prop}

\begin{proof}
The proof is done by induction on
$\vert\nu\vert/2$.

The case $\vert\nu\vert/2=1$ corresponds to
$\nu=(1,1)$ or $\nu=(2)$.
These are treated by Proposition \ref{Prop b Dyson},
and taking into account that the one-dimensional marginals of square Bessel processes follow Gamma distributions.

Now consider the induction step. Assume $\vert\nu\vert/2\geq 2$.
Recall the function $f_{k}(x)$ \eqref{Eq f k}
for $k\in\llbracket 1,m(\nu)\rrbracket$.
We have that
\begin{equation}
\label{Eq init}
f_{1}(x_{1})=c(\nu,\beta,n)(2x_{1})^{\vert\nu\vert/2}
=Q_{\nu}^{1\leftarrow}(\Y_{11}=2x_{1}),
\end{equation}
where for the second equality we applied the
condition (1) in Definition \ref{Def Rec Pol}.
If $m(\nu)=1$, there is nothing more to check.
In the case $m(\nu)\geq 2$, we need only to check that for
every $k\in\llbracket 2,m(\nu)\rrbracket$ and
every $x> x_{k-1}$,
\begin{eqnarray}
\label{Ed deriv}
f'_{k}(x)&=&
\dfrac{\partial}{\partial x}
Q_{\nu}^{k\leftarrow}
((\Y_{k'k'}=2x_{k'})_{1\leq k'\leq k-1},
\Y_{kk}=2x )
\\\nonumber &=&
2\Big(\dfrac{\partial}{\partial \Y_{kk}}
Q_{\nu}^{k\leftarrow}\Big)
((\Y_{k'k'}=2x_{k'})_{1\leq k'\leq k-1},
\Y_{kk}=2x ).
\end{eqnarray}
Indeed, given \eqref{Eq init}, by applying
\eqref{Ed deriv} to $k=2$, we further get
\begin{displaymath}
f_{2}(x_{2})=
P_{\nu}^{2\leftarrow}(\Y_{11}=2x_{1},\Y_{22}=2x_{2}),
\end{displaymath}
and by successively applying
\eqref{Ed deriv} to $k=3,\dots,k=m(\nu)$,
we at the end get
\begin{displaymath}
f_{m(\nu)}(x_{m(\nu)})=
Q_{\nu}^{m(\nu)\leftarrow}((\Y_{k'k'}=2x_{k'})_{1\leq k'\leq m(\nu)}),
\end{displaymath}
which is exactly what we want.
To show \eqref{Ed deriv}, we proceed as follows.
Let $(\F_{x})_{x\geq 0}$ be the filtration of the
Brownian motions 
$((W_{j}(x))_{1\leq j\leq n})_{x\geq 0}$.
Then, for $x>x_{k-1}$,
\begin{displaymath}
f_{k}(x)=
\Big\langle \prod_{k'=1}^{k-1}
p_{\nu_{k'}}(\lambda(x_{k'}))
\Big\langle
\prod_{k'=k}^{m(\nu)}
p_{\nu_{k'}}(\lambda(x))
\Big\vert\F_{x_{k-1}}\Big\rangle_{\beta,n}^{\R_{+}}
\Big\rangle_{\beta,n}^{\R_{+}},
\end{displaymath}
where $\langle\cdot\vert\F_{x_{k-1}}\rangle_{\beta,n}^{\R_{+}}$
denotes the conditional expectation.
To express
\begin{displaymath}
\Big\langle
\prod_{k'=k}^{m(\nu)}
p_{\nu_{k'}}(\lambda(x))
\Big\vert\F_{x_{k-1}}\Big\rangle_{\beta,n}^{\R_{+}},
\end{displaymath}
we apply Itô's formula to
\begin{displaymath}
\prod_{k'=k}^{m(\nu)}
p_{\nu_{k'}}(\lambda(x))
-\Big\langle
\prod_{k'=k}^{m(\nu)}
p_{\nu_{k'}}(\lambda(x_{k-1}))
\Big\rangle_{\beta,n}^{\R_{+}}.
\end{displaymath}
The local martingale part is, according to Lemma
\ref{Lem true mart}, a true martingale, and thus gives a
$0$ conditional expectation. The bounded variation part is a linear combination of terms of form 
$p_{\tilde{\nu}}(\lambda(x)) dx$, with
\begin{displaymath}
\vert\tilde{\nu}\vert = 
\Big(\sum_{k'=k}^{m(\nu)} \nu_{k'}\Big)-2,
\end{displaymath}
the exact expressions following from
Lemma \ref{Lem d p k} and Lemma \ref{Lem quad var k k prim}.
By comparing these expressions with the 
recurrence \eqref{Eq rec poly}, 
and using the induction hypothesis at the step $\vert\nu\vert/2-1$,
we get \eqref{Ed deriv}.
At this stage we omit detailing the tedious but completely elementary computations.
\end{proof}

\subsection{More general formal polynomials}
\label{Subsec more poly}

In previous Section \ref{Sec mome}, 
we defined recursively a family of formal polynomials
$Q_{\nu}=Q_{\nu,\beta,n}$
(Definition \ref{Def Rec Pol}),
which encode the symmetric moments of the $\beta$-Dyson's Brownian motion (Proposition \ref{Prop Mom b Dyson}).
However, these polynomials are insufficient 
both for the generalization of the
BFS-Dynkin isomorphism
(forthcoming Proposition \ref{Prop Iso b Dyson})
and for expressing the symmetric moments of
the stationary version of the $\beta$-Dyson's Brownian motion
(forthcoming Proposition \ref{Prop Iso stat b Dyson}).
Therefore we introduce an other family of
formal polynomials $P_{\nu}=P_{\nu,\beta,n}$,
with $P_{\nu}$ constructed out of $Q_{\nu}$
in a straightforward way which we describe next.

On top of the formal commuting polynomial variables
$(\Y_{kk})_{k\geq 1}$ appearing in the polynomials
$Q_{\nu}$,
we also consider the family of the formal commuting variables
$(\cY_{k-1\,k})_{k\geq 2}$, also commuting with the first one.
A polynomial $P_{\nu}$ will have for variables
$(\Y_{kk})_{1\leq k\leq m(\nu)}$
and $(\cY_{k-1\,k})_{2\leq k\leq m(\nu)}$.

\begin{defn}
\label{Def P nu}
Given $\nu$ a finite family of positive integers
with $\vert\nu\vert$ even,
let $P_{\nu}$ be the polynomial in the variables
$(\Y_{kk})_{1\leq k\leq m(\nu)},(\cY_{k-1\,k})_{2\leq k\leq m(\nu)}$
defined by the following.
\begin{enumerate}
\item $P_{\nu}((\Y_{kk})_{1\leq k\leq m(\nu)},(\cY_{k-1\,k}=1)_{2\leq k\leq m(\nu)}) =
Q_{\nu}((\Y_{kk})_{1\leq k\leq m(\nu)})$.
\item For every $A$ monomial of $P_{\nu}$
and every $k\in \llbracket 2,m(\nu)\rrbracket$,
\begin{equation}
\label{Eq deg A}
\deg_{\cY_{k-1\,k}}A +
2\sum_{k\leq k'\leq m(\nu)}\deg_{\Y_{k' k'}}A
= \sum_{k\leq k'\leq m(\nu)}\nu_{k'}.
\end{equation}
\end{enumerate}
\end{defn}

The property \eqref{Eq monomial} ensures that 
$P_{\nu}=P_{\nu,\beta,n}$
is well defined.
As for $Q_{\nu,\beta,n}$, 
$P_{\nu,\beta,n}$ is defined for every $\beta\in\mathbb{C}$.

Proposition \ref{Prop Mom b Dyson} and Definition \ref{Def P nu}
immediately imply the following.

\begin{cor}
\label{Cor mome P nu}
Let $\beta\geq 0$.
Let $\nu$ be a finite family of positive integers,
with $\vert\nu\vert$ even.
Let $x_{1}\leq x_{2}\leq\dots\leq x_{m(\nu)}\in\R_{+}$.
Then,
\begin{multline*}
\Big\langle \prod_{k=1}^{m(\nu)}
p_{\nu_{k}}(\lambda(x_{k}))
\Big\rangle_{\beta,n}^{\R_{+}} = 
P_{\nu}((\Y_{kk}=2x_{k})_{1\leq k\leq m(\nu)},
(\cY_{k-1\,k}=1)_{2\leq k\leq m(\nu)}
)
\\ =
P_{\nu}((\Y_{kk}=G_{\R_{+}}(x_{k},x_{k}))_{1\leq k\leq m(\nu)},
(\cY_{k-1\,k}=G_{\R_{+}}(x_{k-1},x_{k})/G_{\R_{+}}(x_{k-1},x_{k-1}))_{2\leq k\leq m(\nu)}
).
\end{multline*}
\end{cor}

Next are the expressions for
$Q_{(1,1,\dots,1)}$, $P_{(1,1,\dots,1)}$, $Q_{(2,2,\dots,2)}$
and $P_{(2,2,\dots,2)}$.

\begin{prop}
\label{Prop Gauss Perm}
Let $m\in\N\setminus\{0\}$.
Let $\mathsf{M}=(\mathsf{M}_{kk'})_{1\leq k,k'\leq m}$
be the formal symmetric matrix with entries given by
\begin{equation}
\label{Eq M k k prim}
\mathsf{M}_{kk} = \Y_{kk},
\qquad
\text{for } k<k',~
\mathsf{M}_{kk'} = \mathsf{M}_{k'k} 
= \Y_{kk}\prod_{k+1\leq r\leq k'}\cY_{r-1\,r}.
\end{equation}
The following holds.
\begin{enumerate}
\item Assume $m$ is even,
and let $\nu=(1,1,\dots,1)$, where $1$ appears $m$ times.
Then $Q_{(1,1,\dots,1)}$ $P_{(1,1,\dots,1)}$ satisfies the Wick's rule for Gaussians:
\begin{displaymath}
Q_{(1,1,\dots,1)} = n^{\frac{m}{2}}
\sum_{\substack{(\{a_{i},b_{i}\})_{1\leq i\leq m/2}\\
\text{partition in pairs}
\\
\text{of } \llbracket 1,m\rrbracket
}}
\prod_{i=1}^{m/2}
\Y_{a_{i}\wedge b_{i}\,a_{i}\wedge b_{i}},
\qquad
P_{(1,1,\dots,1)} = n^{\frac{m}{2}}
\sum_{\substack{(\{a_{i},b_{i}\})_{1\leq i\leq m/2}\\
\text{partition in pairs}
\\
\text{of } \llbracket 1,m\rrbracket
}}
\prod_{i=1}^{m/2}
\mathsf{M}_{a_{i}b_{i}},
\end{displaymath}
where $a_{i}\wedge b_{i} =\min(a_{i},b_{i})$ and
where the sums run over the $m!/(2^{\frac{m}{2}}(m/2)!)$
partitions in pairs.
\item Let $\nu=(2,2,\dots,2)$, where $2$ appears $m$ times.
Then
\begin{displaymath}
Q_{(2,2,\dots,2)} = 
2^{m}\Perm_{d(\beta,n)/2}
((\Y_{k\wedge k'\,k\wedge k'})_{1\leq k, k'\leq m}),
\qquad
P_{(2,2,\dots,2)} = 
2^{m}\Perm_{d(\beta,n)/2}(\mathsf{M}).
\end{displaymath}
\end{enumerate}
\end{prop}

\begin{proof}
The expressions for $Q_{(1,1,\dots,1)}$ and $Q_{(2,2,\dots,2)}$ 
are easily obtained by induction on $m$
using Definition \ref{Def Rec Pol}.
Alternatively, for $\beta\geq 0$,
one can use that under the law of $\beta$-Dyson's Brownian motion,
the process
$(p_{1}(\lambda(x)))_{x\geq 0}$
is Gaussian and
the process $(p_{2}(\lambda(x)))_{x\geq 0}$
is $d(\beta,n)/2$-permanental;
see Proposition \ref{Prop b Dyson}.
This gives the expression of 
$Q_{(1,1,\dots,1)}$ and $Q_{(2,2,\dots,2)}$
for $\beta\geq 0$. 
To extend it to general $\beta$ one can use that the coefficients of the polynomials $Q_{\nu}$ are themselves polynomials in $\beta$.
The expressions for $P_{(1,1,\dots,1)}$ and $P_{(2,2,\dots,2)}$
are immediately deducible from those for
$Q_{(1,1,\dots,1)}$ and $Q_{(2,2,\dots,2)}$
by following Definition \ref{Def P nu}.
\end{proof}

For other examples of $P_{\nu}$, see the Appendix.

As a side remark, we observe next that the value $\beta=-\frac{2}{n}$
plays a special role for the polynomials
$Q_{\nu,\beta,n}$ and $P_{\nu,\beta,n}$.
In particular, $P_{\nu,\beta=-\frac{2}{n},n}$ gives the moments 
of the stochastic processes $(\phi_{\R_{+}}(x))_{x\geq 0}$
and $(\phi_{K}(x))_{x\in \R}$ introduced in Section \ref{Sec Iso 1D},
which are Gaussian.
This is also related to the fact that
in the limit $\beta\to -\frac{2}{n}$,
the G$\beta$E converges in law to $n$ identical Gaussians
\eqref{Eq n ident Gauss}.

\begin{prop}
\label{Prop beta 2 n}
Let $n\geq 1$. Let $K>0$.
Let $\nu$ be a finite family of positive integers with
$\vert\nu\vert$ even.
Let $x_{1}\leq\dots\leq x_{m(\nu)}$ be $m(\nu)$ points in
$(0,+\infty)$, resp. in $\R$. Then
\begin{multline*}
Q_{\nu,\beta=-\frac{2}{n},n}
((\Y_{kk}=2x_{k})_{1\leq k\leq m(\nu)}) = \\
P_{\nu,\beta=-\frac{2}{n},n}
((\Y_{kk}=2x_{k})_{1\leq k\leq m(\nu)},
(\cY_{k-1\,k}=1)_{2\leq k\leq m(\nu)})
=n^{m(\nu)-\vert\nu\vert/2}\E\Big[\prod_{k=1}^{m(\nu)}\phi_{\R_{+}}(x_{k})^{\nu_{k}}\Big],
\end{multline*}
resp.
\begin{multline*}
P_{\nu,\beta=-\frac{2}{n},n}
((\Y_{kk}=1/\sqrt{2K})_{1\leq k\leq m(\nu)},
(\cY_{k-1\,k}=e^{-\sqrt{2K}(x_{k}-x_{k-1})})_{2\leq k\leq m(\nu)})
\\=n^{m(\nu)-\vert\nu\vert/2}\E\Big[\prod_{k=1}^{m(\nu)}\phi_{K}(x_{k})^{\nu_{k}}\Big].
\end{multline*}
That is to say, the variables
$\Y_{kk}$ are replaced by
$G_{\R_{+}}(x_{k},x_{k})$, resp. $G_{K}(x_{k},x_{k})$,
and the variables $\cY_{k-1\,k}$ by
$G_{\R_{+}}(x_{k-1},x_{k})/G_{\R_{+}}(x_{k-1},x_{k-1})$,
resp.
$G_{K}(x_{k-1},x_{k})/G_{K}(x_{k-1},x_{k-1})$.
\end{prop}

\begin{proof}
First, one can check that
\begin{equation}
\label{Eq sol 2 n}
c\Big(\nu,\beta=-\frac{2}{n},n\Big)=
n^{m(\nu)-\vert\nu\vert/2}
\dfrac{\vert\nu\vert !}{2^{\vert\nu\vert/2} (\vert\nu\vert/2)!}.
\end{equation}
This follows from Proposition \ref{Proberty GbE}.
The key point is that
\begin{displaymath}
d\Big(\beta=-\frac{2}{n},n\Big)=1.
\end{displaymath}

Given $\nu$ a finite family of positive integers, 
let $\mathbf{k}_{\nu} : \llbracket 1,\vert\nu\vert\rrbracket
\mapsto \llbracket 1,m(\nu)\rrbracket$ be the function such that
\begin{equation}
\label{Eq k nu}
\mathbf{k}_{\nu}^{-1}(1)=
\llbracket 1,\nu_{1}\rrbracket,
\qquad \text{for } k'\in \llbracket 2,m(\nu)\rrbracket,~
\mathbf{k}_{\nu}^{-1}(k')=
\llbracket \nu_{1}+\dots+\nu_{k'-1}+1,
\nu_{1}+\dots+\nu_{k'}\rrbracket.
\end{equation}
Further, let $(\widetilde{Q}_{\nu})_{\vert\nu\vert \text{ even}}$ 
be the following formal polynomials:
\begin{displaymath}
\widetilde{Q}_{\nu}= n^{m(\nu)-\vert\nu\vert/2}
\sum_{\substack{(\{a_{i},b_{i}\})_{1\leq i\leq \vert\nu\vert/2}\\
\text{partition in pairs}
\\
\text{of } \llbracket 1,\vert\nu\vert\rrbracket
}}
\prod_{i=1}^{\vert\nu\vert/2}
\Y_{\mathbf{k}_{\nu}(a_{i})\wedge\mathbf{k}_{\nu}(b_{i})\,\mathbf{k}_{\nu}(a_{i})\wedge\mathbf{k}_{\nu}(b_{i})}.
\end{displaymath}
To conclude, we need only to check that
$\widetilde{Q}_{\nu} = Q_{\nu,\beta=-\frac{2}{n},n}$
for all $\nu$ with $\vert\nu\vert$ even.
Indeed, this immediately implies that
\begin{displaymath}
P_{\nu,\beta=-\frac{2}{n},n}= n^{m(\nu)-\vert\nu\vert/2}
\sum_{\substack{(\{a_{i},b_{i}\})_{1\leq i\leq \vert\nu\vert/2}\\
\text{partition in pairs}
\\
\text{of } \llbracket 1,\vert\nu\vert\rrbracket
}}
\prod_{i=1}^{\vert\nu\vert/2}
\mathsf{M}_{\mathbf{k}_{\nu}(a_{i})\mathbf{k}_{\nu}(b_{i})},
\end{displaymath}
where the $\mathsf{M}_{k k'}$ are given by \eqref{Eq M k k prim},
and thus $n^{-m(\nu)+\vert\nu\vert/2}P_{\nu,\beta=-\frac{2}{n},n}$
corresponds to the Wick's rule.
So by evaluating in $\Y_{kk}=G_{\R_{+}}(x_{k},x_{k})$
and $\cY_{k-1\,k}=G_{\R_{+}}(x_{k-1},x_{k})/G_{\R_{+}}(x_{k-1},x_{k-1})$,
resp.
$\Y_{kk}=G_{K}(x_{k},x_{k})$
and $\cY_{k-1\,k}=G_{K}(x_{k-1},x_{k})/G_{K}(x_{k-1},x_{k-1})$,
one gets the moments of $\phi_{\R_{+}}$,
resp. $\phi_{K}$.

The identity $\widetilde{Q}_{\nu} = Q_{\nu,\beta=-\frac{2}{n},n}$
can be checked by induction over
$\vert\nu\vert/2$ by following
Definition \ref{Def Rec Pol}.
From \eqref{Eq sol 2 n} follows that the $\widetilde{Q}_{\nu}$
satisfy the condition (1) in Definition \ref{Def Rec Pol}.
One can further check the recurrence \eqref{Eq rec poly},
and this amounts to counting the pairs in
$\mathbf{k}_{\nu}^{-1}(\llbracket k,m(\nu)\rrbracket)$.
\end{proof}

\subsection{BFS-Dynkin isomorphism for $\beta$-Dyson's Brownian motion}
\label{Sec Dynkin Dyson}

We will denote by $\Upsilon$ a generic finite family of
continuous paths on $\mathbb{R}$,
$\Upsilon=(\gamma_{1},\dots,\gamma_{J})$,
and $J(\Upsilon)$ will denote the size $J$ of the family.
We will consider finite Brownian measures on $\Upsilon$
where $J(\Upsilon)$ is not fixed but may take several values under the measure. Given $x\in \R$, $L^{x}(\Upsilon)$ will denote the sum of Brownian local times at $x$:
\begin{displaymath}
L^{x}(\Upsilon)=\sum_{i=1}^{J(\Upsilon)}L^{x}(\gamma_{i}).
\end{displaymath}
$L(\Upsilon)$ will denote the occupation field
$x\mapsto L^{x}(\Upsilon)$.

Given $\nu$ a finite family of positive integers with
$\vert\nu\vert$ even and 
$0<x_{1}< x_{2}<\dots<x_{m(\nu)}$,
$\mu_{\R_{+}}^{\nu,x_{1},\dots,x_{m(\nu)}}(d\Upsilon)$
(also depending on $\beta$ and $n$)
will be the measure on finite families of
continuous paths obtained by substituting
in the polynomial $P_{\nu}=P_{\nu,\beta,n}$
for each variable $\Y_{kk}$ the measure
$\mu_{\R_{+}}^{x_{k},x_{k}}$,
and for each variable $\cY_{k-1\,k}$
the measure $\check{\mu}_{\R_{+}}^{x_{k-1},x_{k}}$;
see Section \ref{Sec Iso 1D}.
Since we will deal with the functional
$L(\Upsilon)$ under 
$\mu_{\R_{+}}^{\nu,x_{1},\dots,x_{m(\nu)}}(d\Upsilon)$,
the order of the Brownian measures in a product will not matter.
For instance, for
$\nu=(2,1,1)$ (see Appendix),
\begin{displaymath}
P_{(2,1,1)}=
\Big(\dfrac{\beta}{2} n^{3} + \Big(1-\dfrac{\beta}{2}\Big) n^{2}\Big)
\Y_{11}\Y_{22}\cY_{23}
+ 2n \Y_{11}^{2}\cY_{12}^{2}\cY_{23},
\end{displaymath}
and
\begin{eqnarray*}
\mu_{\R_{+}}^{(2,1,1),x_{1},x_{2},x_{3}}&=&
\Big(\dfrac{\beta}{2} n^{3} + \Big(1-\dfrac{\beta}{2}\Big) n^{2}\Big)
\mu_{\R_{+}}^{x_{1},x_{1}}\otimes\mu_{\R_{+}}^{x_{2},x_{2}}
\otimes\check{\mu}_{\R_{+}}^{x_{2},x_{3}}
\\&&
+2n\mu_{\R_{+}}^{x_{1},x_{1}}\otimes\mu_{\R_{+}}^{x_{1},x_{1}}
\otimes\check{\mu}_{\R_{+}}^{x_{1},x_{2}}
\otimes\check{\mu}_{\R_{+}}^{x_{1},x_{2}}
\otimes\check{\mu}_{\R_{+}}^{x_{2},x_{3}}.
\end{eqnarray*}
Note that depending on values of $n$ and $\beta$,
a measure
$\mu_{\R_{+}}^{\nu,x_{1},\dots,x_{m(\nu)}}$
may be signed.

Next is a version of BFS-Dynkin isomorphism
(Theorem \eqref{ThmIsoDynkin}) for
$\beta$-Dyson's Brownian motion.

\begin{prop}
\label{Prop Iso b Dyson}
Let $\nu$ be a finite family of positive integers,
with $\vert\nu\vert$ even and let
$0<x_{1}< x_{2}<\dots<x_{m(\nu)}$.
Let $F$ be a bounded measurable functional on
$\mathcal{C}(\R_{+})$.
Then
\begin{equation}
\label{Eq Iso b Dyson}
\Big\langle
\prod_{k=1}^{m(\nu)}
p_{\nu_{k}}(\lambda(x_{k}))
F\Big(\dfrac{1}{2}p_{2}(\lambda)\Big)
\Big\rangle_{\beta,n}^{\R_{+}}
=\int_{\Upsilon}
\Big\langle
F\Big(\dfrac{1}{2}p_{2}(\lambda)+L(\Upsilon)\Big)
\Big\rangle_{\beta,n}^{\R_{+}}
\mu_{\R_{+}}^{\nu,x_{1},\dots,x_{m(\nu)}}(d\Upsilon).
\end{equation}
\end{prop}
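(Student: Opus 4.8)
\textbf{Proof strategy for Proposition \ref{Prop Iso b Dyson}.}
The plan is to mimic the inductive proof of Proposition \ref{Prop Mom b Dyson}, but now carrying the extra functional factor $F(\tfrac12 p_2(\lambda))$. The key observation is that the right-hand side of \eqref{Eq Iso b Dyson}, once one expands $\mu_{\R_{+}}^{\nu,x_{1},\dots,x_{m(\nu)}}$ via $P_{\nu}$ as a sum of products of $\mu_{\R_{+}}^{x_k,x_k}$-type and $\check{\mu}_{\R_{+}}^{x_{k-1},x_k}$-type measures, admits a differential description in each endpoint $x_k$ parallel to the recurrence \eqref{Eq rec poly}. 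So I would introduce, for $x > x_{m(\nu)-1}$ (and analogously for the intermediate levels), the function
\begin{displaymath}
g_{k}(x):=\int_{\Upsilon}\Big\langle
\prod_{k'=1}^{k-1}p_{\nu_{k'}}(\lambda(x_{k'}))
\prod_{k'=k}^{m(\nu)}p_{\nu_{k'}}(\lambda(x))
\,F\Big(\tfrac12 p_2(\lambda)+L(\Upsilon)\Big)
\Big\rangle_{\beta,n}^{\R_{+}}
\mu_{\R_{+}}^{\text{(the }P_{\nu}^{k\leftarrow}\text{-measure with last level }x)}(d\Upsilon),
\end{displaymath}
show that $g_1(x_1)$ matches the left-hand side evaluated with $m(\nu)=1$ collapsed, and then show $\frac{d}{dx}g_k(x)$ reproduces, via the recurrence, the $g_{k-1}$-level quantities, exactly as in the proof of Proposition \ref{Prop Mom b Dyson}.

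\textbf{The crucial new ingredient} is the handling of the local time picked up by the concatenated path measures $\check{\mu}_{\R_{+}}^{x_{k-1},x_k}$, and this is where condition (3) of Definition \ref{Def Rec Pol} finally enters. Concretely: when differentiating in $x_k$ the product $\prod_{k'=k}^{m(\nu)}p_{\nu_{k'}}(\lambda(x_k))$ by Itô's formula, besides the bounded-variation terms of Lemma \ref{Lem d p k} and Lemma \ref{Lem quad var k k prim}, there is a cross-variation between each $\lambda_j(x_k)^{q-1}\,dW_j$ and the martingale $d\tfrac12 p_2(\lambda(x_k)) = 2\sum_j \tfrac{\lambda_j(x_k)}{\sqrt2}dW_j(x_k)$, which produces terms where $F$ is hit by its derivative $F'$. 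The mechanism of the classical BFS-Dynkin isomorphism is that these $F'$-terms are precisely cancelled/absorbed by the term in which a new path of type $\check{\mu}_{\R_{+}}^{x_{k-1},x_k}$ (or $\mu_{\R_{+}}^{x_k,x_k}$) is created and its local time $L(\gamma)$ is inserted inside $F$: writing $F(\tfrac12 p_2 + L(\Upsilon) + L(\gamma)) - F(\tfrac12 p_2 + L(\Upsilon))$ and integrating against the path measure reproduces, by the one-dimensional BFS-Dynkin identity of Theorem \ref{ThmIsoDynkin} applied levelwise, exactly the missing $F'$-contribution. I would therefore prove the key identity first in the model case $F(t)=e^{-Kt}$, where everything becomes an exponential moment and the computation reduces to comparing with the Green's functions $G_K$; the general $F$ follows by a density/monotone-class argument, since both sides are bounded and continuous in $F$ for bounded pointwise convergence.

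\textbf{Assembling the induction.} At level $\vert\nu\vert/2 = 1$ the statement reduces to $\nu = (2)$ or $\nu = (1,1)$; for $\nu=(2)$, $\tfrac12 p_2(\lambda)$ is a squared Bessel process of dimension $d(\beta,n)$ (Proposition \ref{Prop b Dyson}), hence a permanental occupation field, and the identity is the $k=1$ case of Theorem \ref{Thm loop soup}/the BFS-Dynkin identity with $\alpha = d(\beta,n)/2$; for $\nu=(1,1)$ it follows from Proposition \ref{Prop Gauss Perm}(1) (Wick's rule) reducing to the Gaussian BFS-Dynkin statement. For the inductive step, the right-hand side of \eqref{Eq rec poly} only involves families $\tilde\nu$ with $\vert\tilde\nu\vert = \vert\nu\vert - 2$, so the induction hypothesis applies to all the terms appearing after differentiation; matching $\frac{d}{dx}g_k(x)$ with $2\,\partial_{\Y_{kk}}P_{\nu}^{k\leftarrow}$-times-$\mu$ term by term is then the same bookkeeping as in Proposition \ref{Prop Mom b Dyson}, now with the extra $F'$/new-loop cancellation supplied by condition (3) and the levelwise application of Theorem \ref{ThmIsoDynkin}. \textbf{The main obstacle} I anticipate is precisely this cancellation: ensuring that the $F'$-terms generated by the Itô cross-variation with $d\tfrac12 p_2(\lambda)$ are matched by the path-creation terms with the correct combinatorial coefficients (the factors $\beta/2$, $1-\beta/2$, and the $n$'s in \eqref{Eq rec poly}), and that condition (3)'s constraint on $\deg_{\cY_{k-1\,k}}$ is exactly what makes the concatenation $\mu_{\R_{+}}^{x_{k-1},x_{k-1}}\otimes\check{\mu}_{\R_{+}}^{x_{k-1},x_k}$ decomposition (Remark \ref{Rmk chech}) compatible with the recursion. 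Once the $F(t)=e^{-Kt}$ case is pinned down this is mechanical, but writing it cleanly requires care.
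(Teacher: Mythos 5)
Your overall plan (reduce to exponential test functionals, then exploit the structure of $P_{\nu}$) points in the right direction, but the core mechanism you propose for the inductive step does not work as described, and the key idea of the actual proof is missing. First, the test class: $F(t)=e^{-Kt}$, i.e.\ a constant killing rate, is not measure-determining for functionals of the whole field $x\mapsto \frac{1}{2}p_{2}(\lambda(x))$; you need $F(\ell)=\exp(-\int_{\R_{+}}\ell(x)\chi(x)\,dx)$ for arbitrary continuous, compactly supported $\chi\geq 0$, and once that class is handled no monotone-class step beyond the standard one is left. Second, and more importantly, the ``$F'$-from-cross-variation'' cancellation you describe is not how the exponential weight interacts with the Itô decomposition: the process $x\mapsto\exp\big(-\frac{1}{2}\int_{0}^{x}p_{2}(\lambda(y))\chi(y)\,dy\big)$ has finite variation, so its cross-variation with the stochastic integrals $\int\sum_{j}\lambda_{j}^{q-1}dW_{j}$ of Lemma \ref{Lem true mart} vanishes identically. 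The real obstruction is that those stochastic integrals are no longer martingales under the tilted expectation, so the conditioning step in the proof of Proposition \ref{Prop Mom b Dyson} breaks down, and no term of the form you describe appears to repair it.

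The idea you are missing is the one carried by Lemmas \ref{Lem chi}--\ref{Lem Mom tilde Dyson}: the weight $\exp(-\frac{1}{2}\int p_{2}(\lambda)\chi)$, normalized by $u_{\chi\downarrow}(+\infty)^{-\frac{1}{2}d(\beta,n)}$, is an exponential martingale, and by Girsanov the tilted $\lambda$ has the law of the process $\tilde{\lambda}$ of \eqref{Eq b Dyson tilde}; this $\tilde{\lambda}$ is in turn a deterministic space--scale change of $\lambda$ itself (Lemma \ref{Lem scale change}), so Proposition \ref{Prop Mom b Dyson} applies verbatim and yields the symmetric moments of $\tilde{\lambda}$ as $P_{\nu}$ evaluated at $\Y_{kk}=G_{\R_{+},\chi}(x_{k},x_{k})$ and $\cY_{k-1\,k}=u_{\chi\downarrow}(x_{k})/u_{\chi\downarrow}(x_{k-1})$. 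The degree identities of Proposition \ref{Prop defn}(1) (hence condition (3) of Definition \ref{Def Rec Pol}) are exactly what make this substitution consistent --- not a compatibility of concatenations. The proof then closes by observing that $G_{\R_{+},\chi}(x,x)$ and $u_{\chi\downarrow}(y)/u_{\chi\downarrow}(x)$ are precisely the Laplace transforms $\int e^{-\int L(\gamma)\chi}\mu_{\R_{+}}^{x,x}(d\gamma)$ and $\int e^{-\int L(\gamma)\chi}\check{\mu}^{x,y}(d\gamma)$. Your base cases (Palm formula for the permanental field $\frac{1}{2}p_{2}(\lambda)$, Wick's rule for $p_{1}$) are correct but become unnecessary once this reduction is in place.
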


\begin{rmk}
In the limiting case when $x_{k}=x_{k-1}$ for some
$k\in\llbracket 2,m(\nu)\rrbracket$,
$\cY_{k-1\,k}$ in $P_{\nu}$ has to be replaced by the constant $1$ instead of a measure on Brownian paths.
\end{rmk}

\begin{rmk}
For $\beta\in\{0,1,2,4\}$,
\eqref{Eq Iso b Dyson} reduces to the Gaussian case of
Theorem \ref{ThmIsoDynkin}.
\end{rmk}

Let us first outline our strategy for proving
Proposition \ref{Prop Iso b Dyson}.
By density arguments it is enough to show \eqref{Eq Iso b Dyson} for functionals $F$ of form
\begin{displaymath}
F((\ell(x))_{x\geq 0})
=\exp\Big(-\int_{\R_{+}}\ell(x)\chi(x) dx\Big),
\end{displaymath}
where $\chi$ is a continuous non-negative function with compact support in $(0,+\infty)$.
For such $F$, the value returned by the right-hand side of
\eqref{Eq Iso b Dyson} is well understood and is related to the local times of Brownian motions with a killing rate given by $\chi$.
In order to deal with the left-hand side of \eqref{Eq Iso b Dyson},
one interprets
\begin{displaymath}
\dfrac{\exp\Big(
-\frac{1}{2}\int_{0}^{+\infty}
p_{2}(\lambda(y))\chi(y) dy
\Big)}
{\Big\langle\exp\Big(
-\frac{1}{2}\int_{0}^{+\infty}
p_{2}(\lambda(y))\chi(y) dy
\Big)\Big\rangle_{\beta,n}^{\R_{+}}}
\end{displaymath}
as a density in a change of measure.
Then it remains to describe the law of the stochastic process
$(\lambda(x))_{x\geq 0}$ under the new measure, and in particular
express its symmetric moments.
It turns out that under the new measure,
the process can still be reduced to a $\beta$-Dyson's Brownian motion
through a deterministic transformation reminiscent of
the scale and time changes for one-dimensional diffusions;
see Lemma \ref{Lem scale change}.

We start by some intermediate lemmas.
Recall that $(\F_{x})_{x\geq 0}$ denotes the filtration of the
Brownian motions 
$((W_{j}(x))_{1\leq j\leq n})_{x\geq 0}$
in \eqref{Eq b Dyson}.
Consider $\chi$ a continuous non-negative function with compact support in $(0,+\infty)$.
Let $u_{\chi\downarrow}$ denote the unique solution to
\begin{displaymath}
\dfrac{1}{2}\dfrac{d^{2}}{dx}u
=\chi u
\end{displaymath}
which is positive non-increasing on $\R_{+}$,
with $u_{\chi\downarrow}(0)=1$.
See \cite[Section~2.1]{Lupu20131dimLoops} for details.
Then
\begin{displaymath}
u_{\chi\downarrow}(+\infty)
=\lim_{x\to +\infty}u_{\chi\downarrow}(x) > 0.
\end{displaymath}

\begin{lemma}
\label{Lem chi}
Let $\D_{\chi}(+\infty)$ be the positive r.v.
\begin{equation}
\label{Eq Density}
\D_{\chi}(+\infty):=
u_{\chi\downarrow}(+\infty)^{-\frac{1}{2}d(\beta,n)}
\exp\Big(
-\dfrac{1}{2}\int_{0}^{+\infty}
p_{2}(\lambda(y))\chi(y) dy
\Big).
\end{equation}
Then $\langle\D_{\chi}(+\infty)\rangle^{\R_{+}}_{\beta,n}=1$.
Moreover,
\begin{multline}
\label{Eq Density cond}
\D_{\chi}(x):=
\langle\D_{\chi}(+\infty)
\vert\F_{x}\rangle^{\R_{+}}_{\beta,n}
\\
=
u_{\chi\downarrow}(x)^{-\frac{1}{2}d(\beta,n)}
\exp\Big(
-\dfrac{1}{2}\int_{0}^{x}
p_{2}(\lambda(y))\chi(y) dy
\Big)
\exp\Big(
\dfrac{1}{4}p_{2}(\lambda(x))
\dfrac{u'_{\chi\downarrow}(x)}{u_{\chi\downarrow}(x)}
\Big).
\end{multline}
Let
\begin{displaymath}
\M_{\chi}(x):=\dfrac{1}{\sqrt{2}}
\int_{0}^{x}
\dfrac{u'_{\chi\downarrow}(y)}{u_{\chi\downarrow}(y)}
\sum_{j=1}^{n}
\lambda_{j}(y) dW_{j}(y) .
\end{displaymath}
Then $(\M_{\chi}(x))_{x\geq 0}$ is a martingale with respect to the filtration $(\F_{x})_{x\geq 0}$ and
for all $x\geq 0$,
\begin{displaymath}
\D_{\chi}(x)=\exp\Big(\M_{\chi}(x)
-\dfrac{1}{2}\langle\M_{\chi},\M_{\chi}\rangle(x)\Big).
\end{displaymath}
\end{lemma}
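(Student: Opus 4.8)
The plan is to verify Lemma \ref{Lem chi} by a Girsanov-type argument together with an explicit Itô computation. First I would recognize that the quantity $\D_{\chi}(+\infty)$ in \eqref{Eq Density} is precisely the Radon-Nikodym density obtained by adding the potential $\chi$ to the square Bessel process $\big(\tfrac{1}{2}p_{2}(\lambda(x))\big)_{x\geq 0}$, which by Proposition \ref{Prop b Dyson}(2) has dimension $d(\beta,n)$. Indeed, the standard $h$-transform / Girsanov formula for square Bessel processes with added potential (see Section 2.1 in \cite{Lupu20131dimLoops}) says that the law of the square Bessel process killed at rate $\chi$ is absolutely continuous with respect to the free one, with a density of exactly the form $u_{\chi\downarrow}(+\infty)^{-d(\beta,n)/2}\exp\big(-\tfrac12\int_0^\infty p_2(\lambda(y))\chi(y)\,dy\big)$. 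Since the killed measure is a probability measure (the process is transient to $+\infty$ and $u_{\chi\downarrow}(+\infty)>0$ guarantees the normalization), this gives $\langle\D_{\chi}(+\infty)\rangle^{\R_+}_{\beta,n}=1$. Alternatively, and perhaps more self-containedly, I would simply verify the last identity of the lemma — that $\D_{\chi}(x)=\exp\big(\M_{\chi}(x)-\tfrac12\langle\M_{\chi},\M_{\chi}\rangle(x)\big)$ — and deduce the martingale property and normalization from the fact that $\M_{\chi}$ is a true martingale with exponential moments.

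The computational core is the following. Using $d\tfrac12 p_2(\lambda(x))=\sqrt{2}\sum_j \lambda_j(x)\,dW_j(x)+d(\beta,n)\,dx$ from the proof of Proposition \ref{Prop b Dyson}, I would apply Itô's formula to the right-hand side of \eqref{Eq Density cond}, i.e.\ to
\begin{displaymath}
G(x):=u_{\chi\downarrow}(x)^{-d(\beta,n)/2}\exp\Big(-\tfrac12\int_0^x p_2(\lambda(y))\chi(y)\,dy\Big)\exp\Big(\tfrac14 p_2(\lambda(x))\tfrac{u'_{\chi\downarrow}(x)}{u_{\chi\downarrow}(x)}\Big).
\end{displaymath}
The drift terms that arise are: the $x$-derivative of $\log u_{\chi\downarrow}^{-d(\beta,n)/2}$, namely $-\tfrac{d(\beta,n)}{2}\tfrac{u'}{u}$; the $-\tfrac12 p_2\chi$ term; from differentiating $\tfrac14 p_2\tfrac{u'}{u}$ we get $\tfrac14 p_2\big(\tfrac{u''}{u}-(\tfrac{u'}{u})^2\big)+\tfrac14\tfrac{u'}{u}\cdot d(\beta,n)$ (the latter from the $dx$ part of $dp_2$); and from the quadratic variation (using $d\langle\tfrac12 p_2,\tfrac12 p_2\rangle=2p_2\,dx$, i.e.\ Lemma \ref{Lem quad var k k prim} applied with $q=q'$ summed appropriately, equivalently $d\langle p_2,p_2\rangle=8p_2\,dx$... ) one gets a term $\tfrac12\cdot\tfrac{1}{4}\cdot 2 p_2 (\tfrac{u'}{u})^2\cdot 2 = \tfrac14 p_2(\tfrac{u'}{u})^2$ after combining with the $\tfrac14$ prefactor squared times the relevant quadratic variation constant. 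The equation $\tfrac12 u''=\chi u$, i.e.\ $\tfrac{u''}{u}=2\chi$, is exactly what is needed to cancel the $p_2\chi$ term against the $\tfrac14 p_2\tfrac{u''}{u}$ term, and the $(\tfrac{u'}{u})^2 p_2$ terms cancel between the Itô-correction and the cross term, while the $\tfrac{u'}{u}$ terms (constant in $\lambda$) combine: $-\tfrac{d(\beta,n)}{2}\tfrac{u'}{u}+\tfrac{d(\beta,n)}{4}\tfrac{u'}{u}\cdot 2 = 0$. Hence $G$ has zero drift and its martingale (stochastic-integral) part is $\tfrac{1}{\sqrt2}\int_0^x \tfrac{u'_{\chi\downarrow}(y)}{u_{\chi\downarrow}(y)}\sum_j\lambda_j(y)\,dW_j(y)\cdot G(y)=\M_{\chi}'(y)G(y)$, giving $dG=G\,d\M_{\chi}$, which is exactly the Doléans-Dade exponential $\exp(\M_{\chi}-\tfrac12\langle\M_{\chi}\rangle)$ since $G(0)=1$.

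To finish I would check that $\M_{\chi}$ is a genuine $L^2$ martingale: since $\chi$ has compact support in $(0,+\infty)$ and $u_{\chi\downarrow}$ is bounded and bounded away from $0$ on that support with $u'_{\chi\downarrow}/u_{\chi\downarrow}$ continuous and compactly supported, $\langle\M_{\chi},\M_{\chi}\rangle(\infty)=\int_0^\infty\big(\tfrac{u'_{\chi\downarrow}(y)}{u_{\chi\downarrow}(y)}\big)^2 p_2(\lambda(y))\,dy$, and $\langle p_2(\lambda(y))\rangle^{\R_+}_{\beta,n}=d(\beta,n)\cdot 2y$ grows only linearly, so this has finite expectation; the same bound shows the Novikov-type condition can be arranged, or more simply one argues directly that the nonnegative local martingale $\D_{\chi}(x)=G(x)$ is a true martingale because it is an exponential with controlled quadratic variation and $\D_{\chi}(\infty)$ is integrable. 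Then $\langle\D_{\chi}(+\infty)\rangle^{\R_+}_{\beta,n}=\D_{\chi}(0)=1$ and $\D_{\chi}(x)=\langle\D_{\chi}(+\infty)\mid\F_x\rangle^{\R_+}_{\beta,n}$ follow at once. The main obstacle I anticipate is bookkeeping the numerical constants in the Itô expansion — in particular getting the factors in the quadratic-variation term $d\langle p_2,p_2\rangle$ and the $\tfrac14$ versus $\tfrac12$ prefactors to line up so that the $\chi$ and $(u'/u)^2$ terms cancel using only $\tfrac12 u''=\chi u$ — rather than anything conceptually subtle; the collision set $\{x:\lambda_j(x)=\lambda_{j-1}(x)\}$ is Lebesgue-null (as already noted after \eqref{Eq d p 2 c}) and can be neglected throughout.
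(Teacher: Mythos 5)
Your proposal is correct and takes essentially the same route as the paper: the paper also obtains \eqref{Eq Density} and \eqref{Eq Density cond} from the known Laplace functional of squared Bessel processes (Theorem (1.7), Section XI.1 in Revuz--Yor, your ``first'' option), and then verifies the Dol\'eans--Dade identity by exactly the It\^o computation you describe, with all the constants checking out as you found. One caveat on your ``self-contained'' alternative: finiteness of $\E[\langle\M_{\chi},\M_{\chi}\rangle(+\infty)]$ does not give Novikov's condition (which can genuinely fail over the whole support of $\chi$ when $\chi$ is large, since exponential moments of $\int p_{2}(\lambda)\chi$ need not be finite), and integrability of $\D_{\chi}(+\infty)$ alone does not upgrade a nonnegative local martingale from supermartingale to uniformly integrable martingale; to avoid the squared-Bessel citation you would need, e.g., Novikov applied piecewise over sufficiently short subintervals of the support of $\chi$.
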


\begin{proof}
\eqref{Eq Density} and \eqref{Eq Density cond} follow from the properties of square Bessel processes.
See Theorem (1.7), Section XI.1 in \cite{RevuzYor1999BMGrundlehren}.
$(\M_{\chi}(x))_{x\geq 0}$ is obviously a (true) martingale,
as can be seen with the quadratic variation.
%since $d\M_{\chi}(x)$ is $0$ outside the support of $\chi$.
Further,
\begin{displaymath}
d\Big(\dfrac{1}{4}p_{2}(\lambda(x))
\dfrac{u'_{\chi\downarrow}(x)}{u_{\chi\downarrow}(x)}\Big)
= d\M_{\chi}(x) 
+ \dfrac{1}{2}p_{2}(\lambda(x))\chi(x) dx
- \dfrac{1}{4}p_{2}(\lambda(x))
\dfrac{u'_{\chi\downarrow}(x)^{2}}{u_{\chi\downarrow}(x)^{2}} dx
+\dfrac{1}{2}d(\beta,n)
\dfrac{u'_{\chi\downarrow}(x)}{u_{\chi\downarrow}(x)} dx,
\end{displaymath}
and
\begin{displaymath}
d\dfrac{1}{2}\langle\M_{\chi},\M_{\chi}\rangle(x)
=\dfrac{1}{4}p_{2}(\lambda(x))
\dfrac{u'_{\chi\downarrow}(x)^{2}}{u_{\chi\downarrow}(x)^{2}} dx.
\end{displaymath}
Thus
\begin{displaymath}
d\Big(\M_{\chi}(x) - \dfrac{1}{2}\langle\M_{\chi},\M_{\chi}\rangle(x)\Big)
= d\log (\D_{\chi}(x)).
\qedhere
\end{displaymath}
\end{proof}

\begin{lemma}
\label{Lem tilde lambda}
Let  be
$(\tilde{\lambda}(x)=
(\tilde{\lambda}_{1}(x),\dots,
\tilde{\lambda}_{n}(x)))_{x\geq 0}$
with $\tilde{\lambda}_{1}(x)\geq\dots\geq\tilde{\lambda}_{n}(x)$,
satisfying the SDE
\begin{equation}
\label{Eq b Dyson tilde}
d\tilde{\lambda}_{j}(x)=
\sqrt{2} dW_{j}(x) + 
\dfrac{u'_{\chi\downarrow}(x)}{u_{\chi\downarrow}(x)}
\tilde{\lambda}_{j}(x) dx
+ \beta \sum_{j'\neq j} 
\dfrac{dx}{\tilde{\lambda}_{j}(x)-\tilde{\lambda}_{j'}(x)},
\end{equation}
with initial condition $\tilde{\lambda}(0)=0$.
Further consider a change of measure with density
$\D_{\chi}(+\infty)$ \eqref{Eq Density}
on the filtered probability space with filtration
$(\F_{x})_{x\geq 0}$.
Then $\lambda$ after the change of measure and 
$\tilde{\lambda}$ before the change of measure have the same law.
\end{lemma}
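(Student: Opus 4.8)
The plan is to recognize \eqref{Eq b Dyson tilde} as the $h$-transform of the $\beta$-Dyson SDE \eqref{Eq b Dyson} associated with the positive martingale $\D_\chi$, and then invoke Girsanov's theorem. First I would recall from Lemma \ref{Lem chi} that $\D_\chi(x)=\exp(\M_\chi(x)-\tfrac12\langle\M_\chi,\M_\chi\rangle(x))$ is a Doléans-Dade exponential of the true martingale
\begin{displaymath}
\M_\chi(x)=\frac{1}{\sqrt 2}\int_0^x\frac{u'_{\chi\downarrow}(y)}{u_{\chi\downarrow}(y)}\sum_{j=1}^n\lambda_j(y)\,dW_j(y),
\end{displaymath}
and that $\langle\D_\chi(+\infty)\rangle^{\R_+}_{\beta,n}=1$, so that $\D_\chi(+\infty)$ is a legitimate Radon-Nikodym density on $(\F_x)_{x\ge0}$ (one should note $u'_{\chi\downarrow}/u_{\chi\downarrow}$ is bounded, since $\chi$ has compact support in $(0,+\infty)$, so $\M_\chi$ has bounded quadratic variation rate and Novikov's condition holds trivially, making the change of measure genuine, not merely formal).

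Next I would apply Girsanov: under the new measure $\widehat\P$ with $d\widehat\P/d\P|_{\F_x}=\D_\chi(x)$, each $W_j$ becomes a Brownian motion $\widehat W_j$ plus the drift $d\langle W_j,\M_\chi\rangle(x)$. Computing that cross-variation, using $d\langle W_j,W_{j'}\rangle=\delta_{jj'}dx$, gives
\begin{displaymath}
d\langle W_j,\M_\chi\rangle(x)=\frac{1}{\sqrt2}\frac{u'_{\chi\downarrow}(x)}{u_{\chi\downarrow}(x)}\lambda_j(x)\,dx,
\end{displaymath}
so that $dW_j(x)=d\widehat W_j(x)+\frac{1}{\sqrt2}\frac{u'_{\chi\downarrow}(x)}{u_{\chi\downarrow}(x)}\lambda_j(x)\,dx$. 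Substituting into \eqref{Eq b Dyson} yields
\begin{displaymath}
d\lambda_j(x)=\sqrt2\,d\widehat W_j(x)+\frac{u'_{\chi\downarrow}(x)}{u_{\chi\downarrow}(x)}\lambda_j(x)\,dx+\beta\sum_{j'\neq j}\frac{dx}{\lambda_j(x)-\lambda_{j'}(x)},
\end{displaymath}
which is exactly \eqref{Eq b Dyson tilde} driven by the $\widehat\P$-Brownian motions $\widehat W_j$ and started from $0$. Since the solution to \eqref{Eq b Dyson tilde} is pathwise unique — this follows from the same Cépa--Lépingle argument (Theorem 3.1 in \cite{CepaLepingle97DysonBM}) used for \eqref{Eq b Dyson}, the extra linear drift $u'_{\chi\downarrow}/u_{\chi\downarrow}\cdot\lambda_j$ being Lipschitz in $\lambda_j$ and not affecting the singular repulsion term — the law of $\lambda$ under $\widehat\P$ coincides with the law of $\tilde\lambda$ under $\P$, giving the claim.

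The main obstacle, and the only genuinely non-routine point, is making sure the $h$-transform is well defined globally on $[0,+\infty)$: one needs $\D_\chi(+\infty)$ (not just $\D_\chi(x)$ for finite $x$) to be an honest density, which is why the statement $\langle\D_\chi(+\infty)\rangle^{\R_+}_{\beta,n}=1$ from Lemma \ref{Lem chi} is essential, and it in turn rests on the square-Bessel identity for $\tfrac12 p_2(\lambda(x))$ from Proposition \ref{Prop b Dyson}(2) together with the known Laplace-transform/absolute-continuity formulas for square-Bessel processes (Theorem (1.7), Section XI.1 in \cite{RevuzYor1999BMGrundlehren}). A secondary subtlety is handling the collision times $\{x:\lambda_j(x)=\lambda_{j-1}(x)\}$ for $\beta\in[0,1)$; as in the proof of Proposition \ref{Prop b Dyson} these form a set that can be neglected in the Itô/Girsanov computations because each gap behaves like a reflected Bessel process of dimension $\beta+1>1$, so no principal-value or local-time-at-zero correction enters. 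Everything else is a direct Girsanov substitution.
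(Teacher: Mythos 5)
Your proof is correct and follows essentially the same route as the paper: Girsanov's theorem applied to the exponential martingale $\D_{\chi}$, the computation of $d\langle W_{j},\M_{\chi}\rangle(x)=\frac{1}{\sqrt{2}}\frac{u'_{\chi\downarrow}(x)}{u_{\chi\downarrow}(x)}\lambda_{j}(x)\,dx$, and pathwise uniqueness for \eqref{Eq b Dyson tilde} via Cépa--Lépingle. One parenthetical is inaccurate --- $d\langle\M_{\chi},\M_{\chi}\rangle(x)=\frac{1}{4}p_{2}(\lambda(x))\big(u'_{\chi\downarrow}(x)/u_{\chi\downarrow}(x)\big)^{2}dx$ is not bounded since $p_{2}(\lambda)$ is unbounded, so Novikov is not ``trivial'' --- but this is harmless because the martingale property of $\D_{\chi}$ and $\langle\D_{\chi}(+\infty)\rangle^{\R_{+}}_{\beta,n}=1$ are already supplied by Lemma \ref{Lem chi}, which is exactly what the paper relies on.
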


\begin{proof}
The existence and uniqueness of strong solutions to
\eqref{Eq b Dyson tilde} is given by 
\cite[Theorem~3.1]{CepaLepingle97DysonBM}.
The rest is a consequence of Girsanov's theorem;
see Theorems (1.7) and (1.12),
Section VIII.1, in \cite{RevuzYor1999BMGrundlehren}.
Indeed,
\begin{displaymath}
d\langle W_{j}(x),\M_{\chi}(x)\rangle
=\dfrac{1}{\sqrt{2}}
\dfrac{u'_{\chi\downarrow}(x)}{u_{\chi\downarrow}(x)}
\lambda_{j}(x) dx.
\end{displaymath}
Thus, after the change of measure,
the
\begin{displaymath}
W_{j}(x)-\dfrac{1}{\sqrt{2}}\int_{0}^{x}
\dfrac{u'_{\chi\downarrow}(y)}{u_{\chi\downarrow}(y)}
\lambda_{j}(y) dy
\end{displaymath}
for $j\in\llbracket 1,n\rrbracket$ are $n$ i.i.d. standard Brownian motions.
\end{proof}

Let $\psi_{\chi}$ denote the following diffeomorphism of
$\R_{+}$:
\begin{displaymath}
\psi_{\chi}(x)=
\int_{0}^{x}\dfrac{dy}{u_{\chi\downarrow}(y)^{2}}.
\end{displaymath}
Let $\psi_{\chi}^{-1}$ be the inverse diffeomorphism.

\begin{lemma}
\label{Lem scale change}
If $\tilde{\lambda}$ is a solution to the SDE 
\eqref{Eq b Dyson tilde}, then
the process
\begin{displaymath}
\Big(
\dfrac{1}{u_{\chi\downarrow}(\psi_{\chi}^{-1}(x))}
\tilde{\lambda}(\psi_{\chi}^{-1}(x))
\Big)_{x\geq 0}
\end{displaymath}
satisfies the SDE \eqref{Eq b Dyson}.
\end{lemma}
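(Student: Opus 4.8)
The plan is to obtain \eqref{Eq b Dyson} from \eqref{Eq b Dyson tilde} by first undoing the linear drift through a pathwise rescaling by $u_{\chi\downarrow}$, and then removing the remaining factor $u_{\chi\downarrow}^{-2}$ by the deterministic time change $\psi_{\chi}$. First I would set $\mu_{j}(x):=\tilde{\lambda}_{j}(x)/u_{\chi\downarrow}(x)$ and apply Itô's formula (away from the collision times of the $\tilde{\lambda}_{j}$, which form a Lebesgue-null set, exactly as in the proof of Proposition \ref{Prop b Dyson}). The product-rule term $-\tilde{\lambda}_{j}(x)\,u'_{\chi\downarrow}(x)/u_{\chi\downarrow}(x)^{2}\,dx$ cancels precisely the drift $u'_{\chi\downarrow}(x)\tilde{\lambda}_{j}(x)/u_{\chi\downarrow}(x)\,dx$ produced by dividing \eqref{Eq b Dyson tilde} by $u_{\chi\downarrow}(x)$, and since $\tilde{\lambda}_{j}(x)-\tilde{\lambda}_{j'}(x)=u_{\chi\downarrow}(x)\bigl(\mu_{j}(x)-\mu_{j'}(x)\bigr)$ the singular drift turns into $\beta\,u_{\chi\downarrow}(x)^{-2}\sum_{j'\neq j}\bigl(\mu_{j}(x)-\mu_{j'}(x)\bigr)^{-1}dx$. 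Thus $\mu_{j}(0)=0$ and
\begin{displaymath}
d\mu_{j}(x)=\dfrac{\sqrt{2}}{u_{\chi\downarrow}(x)}\,dW_{j}(x)
+\dfrac{\beta}{u_{\chi\downarrow}(x)^{2}}\sum_{j'\neq j}\dfrac{dx}{\mu_{j}(x)-\mu_{j'}(x)}.
\end{displaymath}

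Next I would carry out the time change $t=\psi_{\chi}(x)$ and study $\lambda_{j}(t):=\mu_{j}(\psi_{\chi}^{-1}(t))$; note $\psi_{\chi}$ is a diffeomorphism of $\R_{+}$ onto itself because $u_{\chi\downarrow}(+\infty)>0$ forces $\psi_{\chi}(+\infty)=+\infty$, so $\psi_{\chi}^{-1}$ is globally defined with $(\psi_{\chi}^{-1})'(t)=u_{\chi\downarrow}(\psi_{\chi}^{-1}(t))^{2}$. For the martingale part, put $\widehat{W}_{j}(t):=\int_{0}^{\psi_{\chi}^{-1}(t)}u_{\chi\downarrow}(y)^{-1}\,dW_{j}(y)$; these are continuous local martingales with $\langle\widehat{W}_{j}\rangle_{t}=\psi_{\chi}(\psi_{\chi}^{-1}(t))=t$ and $\langle\widehat{W}_{j},\widehat{W}_{j'}\rangle\equiv 0$ for $j\neq j'$, hence by Lévy's characterization $n$ independent standard Brownian motions, and $\sqrt{2}\int_{0}^{\psi_{\chi}^{-1}(t)}u_{\chi\downarrow}(y)^{-1}dW_{j}(y)=\sqrt{2}\,\widehat{W}_{j}(t)$. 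In the drift, the substitution $y=\psi_{\chi}^{-1}(s)$ converts $\beta\,u_{\chi\downarrow}(y)^{-2}dy$ into $\beta\,ds$, so that $\lambda_{j}(0)=0$ and
\begin{displaymath}
d\lambda_{j}(t)=\sqrt{2}\,d\widehat{W}_{j}(t)+\beta\sum_{j'\neq j}\dfrac{dt}{\lambda_{j}(t)-\lambda_{j'}(t)},
\end{displaymath}
which is exactly \eqref{Eq b Dyson}. The ordering $\lambda_{1}(t)\geq\dots\geq\lambda_{n}(t)$ is inherited from $\tilde{\lambda}_{1}(x)\geq\dots\geq\tilde{\lambda}_{n}(x)$, since both division by the positive function $u_{\chi\downarrow}$ and the increasing time change preserve order.

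The step that requires the most care is the justification of the Itô computation and of neglecting the collision set of the $\tilde{\lambda}_{j}$. For $\beta\geq 0$ each $\tilde{\lambda}_{j}$ is a semimartingale, triple collisions are excluded, and near a double collision $(\tilde{\lambda}_{j}-\tilde{\lambda}_{j-1})/2$ behaves like a Bessel process of dimension $\beta+1>1$, so no additional local-time term enters and the collision times are Lebesgue-negligible; this is precisely the regime already exploited in Proposition \ref{Prop b Dyson} and Remark \ref{Rmk beta Dyson}, and strong existence and uniqueness for \eqref{Eq b Dyson tilde} — which is what allows one to speak of "the solution" — is Theorem 3.1 in \cite{CepaLepingle97DysonBM}. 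For $\beta\geq 1$, where collisions never occur, the argument is unconditional; one could alternatively extend to the remaining values of $\beta$ by continuity, but the direct computation is cleaner.
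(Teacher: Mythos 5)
Your proof is correct and follows essentially the same route as the paper: first divide by $u_{\chi\downarrow}$ to cancel the linear drift, then perform the deterministic time change $\psi_{\chi}$, checking via Lévy's characterization that the time-changed stochastic integrals are independent standard Brownian motions. The paper's own argument is just a terser version of this; your additional care about the collision set and the surjectivity of $\psi_{\chi}$ is sound but not a different method.
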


\begin{proof}
The process $\Big(\dfrac{1}{u_{\chi\downarrow}(x)}
\tilde{\lambda}(x)\Big)_{x\geq 0}$
satisfies
\begin{displaymath}
d\Big(\dfrac{1}{u_{\chi\downarrow}(x)}
\tilde{\lambda}_{j}(x)\Big)
=\dfrac{\sqrt{2}}{u_{\chi\downarrow}(x)} dW_{j}(x)
+
\beta\sum_{j'\neq j}
\dfrac{1}
{u_{\chi\downarrow}(x)^{-1}\tilde{\lambda}_{j}(x)
-u_{\chi\downarrow}(x)^{-1}\tilde{\lambda}_{j'}(x)}
\dfrac{dx}{u_{\chi\downarrow}(x)^{2}}.
\end{displaymath}
By further performing the change of variable given by
$\psi_{\chi}$, one gets \eqref{Eq b Dyson}.
\end{proof}

In the sequel
$(G_{\R_{+},\chi}(x,y))_{x,y\geq 0}$ will denote the
Green's function of
$\frac{1}{2}\frac{d^{2}}{dx^{2}}-\chi$ on $\R_{+}$
with condition 0 in 0.
Then for $0\leq x\leq y$,
\begin{equation}
\label{Eq Green psi}
G_{\R_{+},\chi}(x,y) =
2u_{\chi\downarrow}(x)
\psi_{\chi}(x)u_{\chi\downarrow}(y).
\end{equation}
Indeed,
\begin{displaymath}
\dfrac{1}{2}\dfrac{\partial^{2}}{\partial y^{2}}
\Big(2u_{\chi\downarrow}(x)
\psi_{\chi}(x)u_{\chi\downarrow}(y)
\Big)
=\chi(y)\Big(2u_{\chi\downarrow}(x)
\psi_{\chi}(x)u_{\chi\downarrow}(y)
\Big),
\end{displaymath}
\begin{eqnarray*}
\dfrac{1}{2}\dfrac{\partial^{2}}{\partial x^{2}}
\Big(
2u_{\chi\downarrow}(x)
\psi_{\chi}(x)u_{\chi\downarrow}(y)
\Big)
&=&
\dfrac{1}{2}\dfrac{\partial}{\partial x}
\Big(
2u_{\chi\downarrow}'(x)
\psi_{\chi}(x)u_{\chi\downarrow}(y)
+2\dfrac{u_{\chi\downarrow}(y)}{u_{\chi\downarrow}(x)}
\Big)
\\&=&
\chi(x)\Big(2u_{\chi\downarrow}(x)
\psi_{\chi}(x)u_{\chi\downarrow}(y)
\Big) + 0,
\end{eqnarray*}
and
\begin{displaymath}
\dfrac{1}{2}\Big(\dfrac{\partial}{\partial x}\Big\vert_{x=y}
-\dfrac{\partial}{\partial y}\Big\vert_{y=x}\Big)
\Big(
2u_{\chi\downarrow}(x)
\psi_{\chi}(x)u_{\chi\downarrow}(y)
\Big) =1.
\end{displaymath}

\begin{lemma}
\label{Lem Mom tilde Dyson}
Let $(\tilde{\lambda}(x))_{x\geq 0}$ be the solution to
\eqref{Eq b Dyson tilde} with $\tilde{\lambda}(0)=0$.
Let $\nu$ be a finite family of positive integers,
with $\vert\nu\vert$ even.
Let $x_{1}\leq x_{2}\leq\dots\leq x_{m(\nu)}\in\R_{+}$.
Then,
\begin{displaymath}
\Big\langle \prod_{k=1}^{m(\nu)}
p_{\nu_{k}}(\tilde{\lambda}(x_{k}))
\Big\rangle_{\beta,n}^{\R_{+}} = 
P_{\nu}((\Y_{kk}=G_{\R_{+},\chi}(x_{k},x_{k}))_{1\leq k\leq m(\nu)},
(\cY_{k-1\,k}=
u_{\chi\downarrow}(x_{k})/u_{\chi\downarrow}(x_{k-1}))_{2\leq k\leq m(\nu)}).
\end{displaymath}
\end{lemma}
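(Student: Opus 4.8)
The plan is to reduce the statement to Proposition \ref{Prop Mom b Dyson} by absorbing the extra drift of \eqref{Eq b Dyson tilde} into the deterministic change of variable $\psi_\chi$ and the rescaling of Lemma \ref{Lem scale change}, and then to match the resulting substitution into $P_\nu$ with the claimed one using the homogeneity of $P_\nu$ recorded in Proposition \ref{Prop defn}.

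First I would put $\lambda(x):=u_{\chi\downarrow}(\psi_\chi^{-1}(x))^{-1}\,\tilde\lambda(\psi_\chi^{-1}(x))$; by Lemma \ref{Lem scale change} the process $(\lambda(x))_{x\geq 0}$ is a genuine $\beta$-Dyson's Brownian motion started from $0$ (the initial conditions are consistent since $\psi_\chi(0)=0$), and equivalently $\tilde\lambda(y)=u_{\chi\downarrow}(y)\,\lambda(\psi_\chi(y))$ for all $y\geq 0$. Since $\psi_\chi$ is an increasing diffeomorphism of $\R_{+}$, the points $\psi_\chi(x_1)\leq\dots\leq\psi_\chi(x_{m(\nu)})$ are ordered, and since each power sum $p_q$ is homogeneous of degree $q$,
\begin{displaymath}
\prod_{k=1}^{m(\nu)}p_{\nu_k}(\tilde\lambda(x_k))=\Big(\prod_{k=1}^{m(\nu)}u_{\chi\downarrow}(x_k)^{\nu_k}\Big)\prod_{k=1}^{m(\nu)}p_{\nu_k}(\lambda(\psi_\chi(x_k))).
\end{displaymath}
Taking expectations and applying Proposition \ref{Prop Mom b Dyson} at the points $\psi_\chi(x_k)$ yields
\begin{displaymath}
\Big\langle\prod_{k=1}^{m(\nu)}p_{\nu_k}(\tilde\lambda(x_k))\Big\rangle_{\beta,n}^{\R_{+}}=\Big(\prod_{k=1}^{m(\nu)}u_{\chi\downarrow}(x_k)^{\nu_k}\Big)P_\nu\big((\Y_{kk}=2\psi_\chi(x_k))_{1\leq k\leq m(\nu)},(\cY_{k-1\,k}=1)_{2\leq k\leq m(\nu)}\big).
\end{displaymath}

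It then remains to identify this with $P_\nu\big((\Y_{kk}=G_{\R_{+},\chi}(x_k,x_k))_k,(\cY_{k-1\,k}=u_{\chi\downarrow}(x_k)/u_{\chi\downarrow}(x_{k-1}))_k\big)$. From \eqref{Eq Green psi} one reads off $G_{\R_{+},\chi}(x_k,x_k)=2u_{\chi\downarrow}(x_k)^2\psi_\chi(x_k)$ and $G_{\R_{+},\chi}(x_{k-1},x_k)/G_{\R_{+},\chi}(x_{k-1},x_{k-1})=u_{\chi\downarrow}(x_k)/u_{\chi\downarrow}(x_{k-1})$, so I would argue monomial by monomial: for a monomial $Q=c\prod_k\Y_{kk}^{a_k}\prod_k\cY_{k-1\,k}^{b_k}$ of $P_\nu$ the factor $\prod_k(2\psi_\chi(x_k))^{a_k}$ is common to the two substitutions, while the second one carries in addition $u_{\chi\downarrow}(x_j)$ to the power $2a_j+b_j-b_{j+1}$, with the conventions $b_1=b_{m(\nu)+1}=0$. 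The homogeneity relations of Proposition \ref{Prop defn}(1), in the form $b_k+2\sum_{k'\geq k}a_{k'}=\sum_{k'\geq k}\nu_{k'}$, telescope to $2a_j+b_j-b_{j+1}=\nu_j$, so the extra factor is exactly the prefactor $\prod_k u_{\chi\downarrow}(x_k)^{\nu_k}$, term by term; summing over the monomials of $P_\nu$ finishes the proof.

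The scaling step and the appeal to Proposition \ref{Prop Mom b Dyson} are routine; the one delicate point is this last monomial-by-monomial bookkeeping, where the degree identities of $P_\nu$ have to be matched against the powers of $u_{\chi\downarrow}$ produced by the Green's function formula \eqref{Eq Green psi}. No new integrability issue arises, since $\chi$ has compact support and the relevant martingale property was already handled in Lemma \ref{Lem true mart} within the proof of Proposition \ref{Prop Mom b Dyson}.
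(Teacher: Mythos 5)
Your proposal is correct and follows essentially the same route as the paper: reduce to Proposition \ref{Prop Mom b Dyson} via Lemma \ref{Lem scale change} and the homogeneity of the power sums, then identify the two substitutions into $P_{\nu}$ using \eqref{Eq Green psi} and the degree relations of Proposition \ref{Prop defn}(1). Your telescoping computation $2a_j+b_j-b_{j+1}=\nu_j$ is exactly the verification the paper leaves implicit in its final sentence.
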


\begin{proof}
From Lemma \ref{Lem scale change} and Proposition 
\ref{Prop Mom b Dyson} it follows that
\begin{eqnarray*}
\Big\langle \prod_{k=1}^{m(\nu)}
p_{\nu_{k}}(\tilde{\lambda}(x_{k}))
\Big\rangle_{\beta,n}^{\R_{+}}
&=&
\Big(\prod_{k=1}^{m(\nu)}u_{\chi\downarrow}(x_{k})^{\nu_{k}}\Big)
Q_{\nu}((\Y_{kk}=2\psi_{\chi}(x_{k}))_{1\leq k\leq m(\nu)}).
\\&=&
\Big(\prod_{k=1}^{m(\nu)}u_{\chi\downarrow}(x_{k})^{\nu_{k}}\Big)
P_{\nu}((\Y_{kk}=2\psi_{\chi}(x_{k}))_{1\leq k\leq m(\nu)},
(\cY_{k-1\,k}=1)_{2\leq k\leq m(\nu)}).
\end{eqnarray*}
Further, let $A$ be a monomial of $P_{\nu}$. 
One has to check that
\begin{multline*}
\Big(\prod_{k=1}^{m(\nu)}u_{\chi\downarrow}(x_{k})^{\nu_{k}}\Big)
A((\Y_{kk}=2\psi_{\chi}(x_{k}))_{1\leq k\leq m(\nu)},
(\cY_{k-1\,k}=1)_{2\leq k\leq m(\nu)})
\\=
A((\Y_{kk}=2\psi_{\chi}(x_{k})u_{\chi\downarrow}(x_{k})^{2})_{1\leq k\leq m(\nu)},
(\cY_{k-1\,k}=
u_{\chi\downarrow}(x_{k})/u_{\chi\downarrow}(x_{k-1}))_{2\leq k\leq m(\nu)}).
\end{multline*}
This amounts to counting the power for each 
$u_{\chi\downarrow}(x_{k})$ on both sides.
On the left-hand side, each 
$u_{\chi\downarrow}(x_{k})$ appears with power $\nu_{k}$.
The power of $u_{\chi\downarrow}(x_{k})$
on the right-hand side is
\begin{displaymath}
2\deg_{\Y_{kk}}A
+ \deg_{\Y_{k-1\,k}}A - \deg_{\Y_{k\,k+1}}A .
\end{displaymath}
By \eqref{Eq deg A}, this is again $\nu_{k}$.
Finally, by \eqref{Eq Green psi},
\begin{displaymath}
2\psi_{\chi}(x_{k})u_{\chi\downarrow}(x_{k})^{2}
=G_{\R_{+},\chi}(x_{k},x_{k}).
\qedhere
\end{displaymath}
\end{proof}

\begin{proof}[Proof of Proposition \ref{Prop Iso b Dyson}]
It is enough to show \eqref{Eq Iso b Dyson} for functionals $F$
of form
\begin{displaymath}
F((\ell(x))_{x\geq 0})
=\exp\Big(-\int_{\R_{+}}\ell(x)\chi(x) dx\Big),
\end{displaymath}
where $\chi$ is a continuous non-negative function with compact support in $(0,+\infty)$.
For such a $\chi$,
\begin{multline*}
\Big\langle
\prod_{k=1}^{m(\nu)}
p_{\nu_{k}}(\lambda(x_{k}))
\exp\Big(-\dfrac{1}{2}\int_{\R_{+}}p_{2}(\lambda(x))
\chi(x) dx\Big)
\Big\rangle_{\beta,n}^{\R_{+}}
=
\\
\Big\langle
\exp\Big(-\dfrac{1}{2}\int_{\R_{+}}p_{2}(\lambda(x))
\chi(x) dx\Big)
\Big\rangle_{\beta,n}^{\R_{+}}
\Big\langle
\prod_{k=1}^{m(\nu)}
p_{\nu_{k}}(\tilde{\lambda}(x_{k}))
\Big\rangle_{\beta,n}^{\R_{+}},
\end{multline*}
where $\tilde{\lambda}$ is given by
\eqref{Eq b Dyson tilde}, with
$\tilde{\lambda}(0)=0$.
The symmetric moments of $\tilde{\lambda}$ are given by
Lemma \ref{Lem Mom tilde Dyson}.
To conclude, we use that
\begin{displaymath}
\int_{\gamma}\exp\Big(-\int_{\R_{+}}L^{z}(\gamma)\chi(z) dz\Big)
\mu_{\R_{+}}^{x,x}(d\gamma)
=G_{\R_{+},\chi}(x,x),
\end{displaymath}
and for $0<x<y$,
\begin{displaymath}
\int_{\gamma}\exp\Big(-\int_{\R_{+}}L^{z}(\gamma)\chi(z) dz\Big)
\check{\mu}^{x,y}(d\gamma)
=\dfrac{G_{\R_{+},\chi}(x,y)}{G_{\R_{+},\chi}(x,x)}
=\dfrac{u_{\chi\downarrow}(y)}{u_{\chi\downarrow}(x)};
\end{displaymath}
see \cite[Section~3.2]{Lupu20131dimLoops}.
\end{proof}

\subsection{The stationary case}

In this section we consider the stationary $\beta$-Dyson's Brownian motion on the whole line and state the analogues of
Propositions \ref{Prop b Dyson}, \ref{Prop Mom b Dyson}
and \ref{Prop Iso b Dyson} for it.
The proofs are omitted, as they are similar to the previous ones.
As previously, $n\geq 2$ and $\beta\geq 0$.
Let $K>0$.
We consider the process
$(\lambda(x)=
(\lambda_{1}(x),\dots,
\lambda_{n}(x)))_{x\in\R}$
with $\lambda_{1}(x)\geq\dots\geq\lambda_{n}(x)$,
satisfying the SDE
\begin{equation}
\label{Eq b Dyson stat}
d\lambda_{j}(x)=
\sqrt{2} dW_{j}(x)
-\sqrt{2K}\,\lambda_{j}(x)
+ \beta \sqrt{2K} \sum_{j'\neq j} 
\dfrac{dx}{\lambda_{j}(x)-\lambda_{j'}(x)},
\end{equation}
the $dW_{j}$, $1\leq j\leq n$, being $n$ i.i.d. white noises on $\R$,
and $\lambda$ being stationary, with
$(2K)^{\frac{1}{4}}\lambda(x)$ being distributed according to
\eqref{Eq GbE} (up to reordering of the $\lambda_{j}(x)$-s).

\begin{prop}
\label{Prop stat b Dyson}
The following holds.
\begin{enumerate}
\item The process
$\big(\frac{1}{\sqrt{n}}p_{1}(\lambda(x))\big)_{x\in \R}$
has the same law as $\phi_{K}$.
\item 
Consider a 1D Brownian loop soup $\calL^{\alpha}_{K}$,
with $\alpha$ given by \eqref{Eq alpha d b n}.
The process $(\frac{1}{2}p_{2}(\lambda(x)))_{x\in\R}$
has the same law as the occupation field
$(L^{x}(\calL^{\alpha}_{K}))_{x\in \R}$.
\item 
The processes
$(p_{1}(\lambda(x)))_{x\in \R}$
and 
$\big(\lambda(x)
-\frac{1}{n}p_{1}(\lambda(x))\big)_{x\in\R}$
are independent.
\item 
Let
$\calL^{\alpha-\frac{1}{2}}_{K}$ and
$\widetilde{\calL}^{\frac{1}{2}}_{K}$
be two independent 1D Brownian loop soups,
$\alpha$ given by \eqref{Eq alpha d b n}.
Then, one has the following identity in law between pairs of processes:
\begin{displaymath}
\Big(\frac{1}{2}\Big(p_{2}(\lambda(x))
-\frac{1}{n}p_{1}(\lambda(x))^{2}\Big),
\frac{1}{2n}p_{1}(\lambda(x))^{2}\Big)_{x\in \R}
\stackrel{\text{(law)}}{=}
(L^{x}(\calL^{\alpha-\frac{1}{2}}_{K}),
L^{x}(\widetilde{\calL}^{\frac{1}{2}}_{K}))_{x\in\R}.
\end{displaymath}
\end{enumerate}
\end{prop}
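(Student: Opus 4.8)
The plan is to reproduce, in the stationary setting on the whole line, the It\^o computations behind Proposition~\ref{Prop b Dyson} and Corollary~\ref{Cor Le Jan Dyson}, replacing $\phi_{\R_{+}}$ by $\phi_{K}$, the square Bessel processes by their stationary (ergodic) analogues, and the loop-soups $\calL^{\alpha}_{\R_{+}}$ by $\calL^{\alpha}_{K}$. First I would expand $p_{1}(\lambda(x))=\sum_{j}\lambda_{j}(x)$ along \eqref{Eq b Dyson stat}; the singular interaction cancels by antisymmetry, giving
\begin{displaymath}
dp_{1}(\lambda(x))=\sqrt{2}\sum_{j=1}^{n}dW_{j}(x)-\sqrt{2K}\,p_{1}(\lambda(x))\,dx,
\end{displaymath}
which, as $\tfrac{1}{\sqrt{n}}\sum_{j}W_{j}$ is a standard Brownian motion, is exactly the Ornstein--Uhlenbeck SDE defining $\phi_{K}$; the invariant law $\mathcal{N}(0,1/\sqrt{2K})$ of $\tfrac{1}{\sqrt{n}}p_{1}(\lambda(x))$ follows from Proposition~\ref{Proberty GbE}(1) applied to $(2K)^{1/4}\lambda(x)$. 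This yields (1). For (2), setting $L(x):=\tfrac{1}{2}p_{2}(\lambda(x))$, It\^o's formula (exactly as in the proof of Proposition~\ref{Prop b Dyson}(2)) gives
\begin{displaymath}
dL(x)=2\sqrt{L(x)}\,d\widehat{W}(x)-2\sqrt{2K}\,L(x)\,dx+d(\beta,n)\,dx,
\end{displaymath}
with $\widehat{W}$ a standard Brownian motion read off from the quadratic variation; this is the SDE of $(L^{x}(\calL^{\alpha}_{K}))_{x}$ in Theorem~\ref{Thm loop soup} once $2\alpha=d(\beta,n)$, and by Proposition~\ref{Proberty GbE}(2) both sides have the common stationary one-dimensional marginal $\operatorname{Gamma}(\alpha,\sqrt{2K})=\operatorname{Gamma}(\alpha,G_{K}(x,x)^{-1})$.

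To upgrade these SDE identities to equalities in law between the stationary processes indexed by all of $\R$, I would invoke uniqueness on both sides: pathwise uniqueness of \eqref{Eq b Dyson stat} (Theorem~3.1 in \cite{CepaLepingle97DysonBM}, which also handles the pair collisions present for $\beta\in[0,1)$) pins down the stationary $\beta$-Dyson motion uniquely in law, while Theorem~\ref{Thm loop soup} characterizes $L(\calL^{\alpha}_{K})$ as the stationary solution of the same equation. An equivalent and concrete route is to realize the stationary process on any bounded interval through the deterministic time change $\lambda(x)=e^{-\sqrt{2K}x}\,\lambda^{(0)}\!\big(e^{2\sqrt{2K}x}/(2\sqrt{2K})\big)$ from a half-line $\beta$-Dyson motion $\lambda^{(0)}$ started at a stationary point, in exact parallel with $\phi_{K}$ versus $\phi_{\R_{+}}$; this also provides a two-sided construction making the joint statements below unambiguous.

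For (3) I would apply It\^o to $\lambda_{j}(x)-\tfrac{1}{n}p_{1}(\lambda(x))$: the drift $-\sqrt{2K}\lambda_{j}(x)\,dx$ and the singular interaction are both translation invariant, so the centered family $\big(\lambda_{j}(x)-\tfrac{1}{n}p_{1}(\lambda(x))\big)_{j}$ solves a closed system of the same type driven only by $\big(W_{j}-\tfrac{1}{n}\sum_{j'}W_{j'}\big)_{j}$, which is independent of $p_{1}(W)=\sum_{j}W_{j}$; pathwise uniqueness then makes the centered family measurable with respect to the centered Brownian motions, while $p_{1}(\lambda)$ is measurable with respect to $p_{1}(W)$ by the first display, whence independence (the two-sided stationary versions being built jointly from independent ingredients, as above). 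Finally (4) is obtained by decomposing $\tfrac{1}{2}p_{2}(\lambda)=\tfrac{1}{2}\big(p_{2}(\lambda)-\tfrac{1}{n}p_{1}(\lambda)^{2}\big)+\tfrac{1}{2n}p_{1}(\lambda)^{2}$: the two summands are independent by (3); a computation parallel to \eqref{Eq d p 2 c} shows the first is a stationary solution of the Theorem~\ref{Thm loop soup} SDE with parameter $\alpha-\tfrac12$ (note $2(\alpha-\tfrac12)=d(\beta,n)-1$) and, by Proposition~\ref{Proberty GbE}(4), has marginal $\operatorname{Gamma}((d(\beta,n)-1)/2,\sqrt{2K})$, hence equals $L(\calL^{\alpha-1/2}_{K})$ in law; and the second equals $\tfrac12\phi_{K}^{2}$ in law by (1), hence $L(\widetilde{\calL}^{1/2}_{K})$ by Le Jan's isomorphism \eqref{Iso Le Jan 1D}, the two loop-soups being independent.

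The It\^o expansions are routine; the one genuinely delicate point is passing from these pathwise SDE identities to identities in law between the stationary processes on all of $\R$. This rests entirely on uniqueness of stationary solutions: on the $\beta$-Dyson side it is unproblematic for $\beta\ge1$ and, for $\beta\in[0,1)$, uses the absence of triple collisions together with the reflected Bessel$(\beta+1)$ behaviour of consecutive gaps from \cite{CepaLepingle07NoMultCol}; on the loop-soup side it is precisely the content of Theorem~\ref{Thm loop soup}. The two-sided construction above (or, alternatively, taking the $T\to\infty$ limit of solutions started in equilibrium at $-T$) is the clean way to make the joint laws appearing in (3) and (4) well defined.
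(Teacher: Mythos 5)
Your proposal is correct and follows exactly the route the paper intends: the paper omits the proof of Proposition \ref{Prop stat b Dyson} on the grounds that it is ``similar to the previous ones,'' and your argument is precisely the stationary transcription of the It\^o computations in Proposition \ref{Prop b Dyson} and Corollary \ref{Cor Le Jan Dyson}, combined with the SDE characterization in Theorem \ref{Thm loop soup} and the two-sided construction via the deterministic space change that turns $\phi_{\R_+}$ into $\phi_K$. The only caveat is that your drift constant $d(\beta,n)$ in the equation for $\tfrac12 p_2(\lambda)$ presupposes that the interaction term in \eqref{Eq b Dyson stat} carries no extra factor $\sqrt{2K}$ (as one checks from the invariant measure or from the space change applied to \eqref{Eq b Dyson}), which is clearly the intended normalization.
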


We will denote by $\langle\cdot\rangle^{K}_{\beta,n}$ the expectation with respect to the stationary $\beta$-Dyson's Brownian motion.
Given $\nu$ a finite family of positive integers with
$\vert\nu\vert$ even and 
$x_{1}< x_{2}<\dots<x_{m(\nu)}\in\R$,
$\mu_{K}^{\nu,x_{1},\dots,x_{m(\nu)}}(d\Upsilon)$
(also depending on $\beta$ and $n$)
will be the measure on finite families of
continuous paths obtained by substituting
in the polynomial $P_{\nu}=P_{\nu,\beta,n}$
for each variable $\Y_{kk}$ the measure
$\mu_{K}^{x_{k},x_{k}}$,
and for each variable $\cY_{k-1\,k}$
the measure $\check{\mu}_{K}^{x_{k-1},x_{k}}$.

\begin{prop}
\label{Prop Iso stat b Dyson}
Let $\nu$ a finite family of positive integers with
$\vert\nu\vert$ even.
Let $x_{1}\leq x_{2}\leq\dots\leq x_{m(\nu)}\in\R$.
Then,
\begin{multline*}
\Big\langle \prod_{k=1}^{m(\nu)}
p_{\nu_{k}}(\lambda(x_{k}))
\Big\rangle_{\beta,n}^{K} = 
P_{\nu}((\Y_{kk}=1/\sqrt{2K})_{1\leq k\leq m(\nu)},
(\cY_{k-1\,k}=e^{-\sqrt{2K}(x_{k}-x_{k-1})})_{2\leq k\leq m(\nu)})
\\
\\ =
P_{\nu}((\Y_{kk}=G_{K}(x_{k},x_{k}))_{1\leq k\leq m(\nu)},
(\cY_{k-1\,k}=
G_{K}(x_{k-1},x_{k})/G_{K}(x_{k-1},x_{k-1}))_{2\leq k\leq m(\nu)}
).
\end{multline*}
Further, let $F$ be a bounded measurable functional on
$\mathcal{C}(\R)$.
For $x_{1}< x_{2}<\dots < x_{m(\nu)}\in\R$,
\begin{displaymath}
\Big\langle
\prod_{k=1}^{m(\nu)}
p_{\nu_{k}}(\lambda(x_{k}))
F\Big(\dfrac{1}{2}p_{2}(\lambda)\Big)
\Big\rangle_{\beta,n}^{K}
=\int_{\Upsilon}
\Big\langle
F\Big(\dfrac{1}{2}p_{2}(\lambda)+L(\Upsilon)\Big)
\Big\rangle_{\beta,n}^{K}
\mu_{K}^{\nu,x_{1},\dots,x_{m(\nu)}}(d\Upsilon).
\end{displaymath}
\end{prop}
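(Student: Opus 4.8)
The plan is to transpose, almost verbatim, the arguments of the previous two subsections, replacing the Dirichlet Brownian Green's function $G_{\R_{+}}$ by the massive Green's function $G_{K}$ on the whole line and keeping track of the extra mean-reversion drift $-\sqrt{2K}\,\lambda_{j}(x)\,dx$ in \eqref{Eq b Dyson stat}. The one new piece of stochastic calculus is the analogue of Lemma \ref{Lem d p k}: Itô's formula applied to \eqref{Eq b Dyson stat} produces the same expression as there, with one additional drift term of the form $-\sqrt{2K}\,q\,p_{q}(\lambda(x))\,dx$ coming from the mean-reversion; Lemma \ref{Lem quad var k k prim} is unchanged, and the true-martingale statement of Lemma \ref{Lem true mart} now follows from stationarity, since $\langle p_{\nu}(\lambda)^{2}p_{2q}(\lambda)\rangle_{\beta,n}<+\infty$ and does not depend on $x$. (Proposition \ref{Prop stat b Dyson} itself is obtained exactly as Proposition \ref{Prop b Dyson}, with Ornstein--Uhlenbeck martingale parts replacing Brownian ones, so that $\tfrac12 p_{2}(\lambda(\cdot))$ and $\tfrac12\big(p_{2}(\lambda(\cdot))-\tfrac{1}{n}p_{1}(\lambda(\cdot))^{2}\big)$ become the \emph{stationary} square-Bessel, equivalently loop-soup occupation, processes of Theorem \ref{Thm loop soup}.)

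For the moment formula (first display) I would redo the induction on $\vert\nu\vert/2$ from the proof of Proposition \ref{Prop Mom b Dyson}, the functions $f_{k}$ being defined as there but with respect to $\langle\cdot\rangle^{K}_{\beta,n}$. The cases $\vert\nu\vert/2=1$ follow from Proposition \ref{Prop stat b Dyson}(1)--(2) and Proposition \ref{Prop Gauss Perm}, using $G_{K}(x,x)=1/\sqrt{2K}$ and $G_{K}(x,y)/G_{K}(x,x)=e^{-\sqrt{2K}\vert y-x\vert}$. In the induction step, the Markov property of \eqref{Eq b Dyson stat} together with the true-martingale lemma reduce $\frac{d}{dx}f_{k}(x)$ (for $x>x_{k-1}$) to the bounded-variation part of Itô's formula, which splits into a mean-reversion contribution proportional to $-\sqrt{2K}\,f_{k}(x)$ and a lower-order part identical in form to the one appearing in the $\R_{+}$ proof (a linear combination of $p_{\tilde\nu}$ with $\vert\tilde\nu\vert=\sum_{k'\geq k}\nu_{k'}-2$, organized by the recurrence \eqref{Eq rec poly}). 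On the polynomial side, $\Y_{kk}$ is now substituted by the \emph{constant} $G_{K}(x,x)=1/\sqrt{2K}$, so the $x$-dependence sits entirely in $\cY_{k-1\,k}=e^{-\sqrt{2K}(x-x_{k-1})}$; differentiating and using point (1) of Proposition \ref{Prop defn} to re-express $\cY_{k-1\,k}\,\partial_{\cY_{k-1\,k}}$ in terms of $\partial_{\Y_{kk}}$ and the total weight $\sum_{k'\geq k}\nu_{k'}$, one obtains precisely the same split (the factor $2\sqrt{2K}\,\Y_{kk}$ collapsing to $2$), and the induction hypothesis closes the argument. The main obstacle is exactly this last matching: since the $\cY$-variables are no longer set to $1$, condition (3) of Definition \ref{Def Rec Pol} (via point (1) of Proposition \ref{Prop defn}) now genuinely enters, and one must check that the mean-reversion term lines up with the exponential factor $e^{-\sqrt{2K}(x-x_{k-1})}$.

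For the isomorphism (second display) I would follow the proof of Proposition \ref{Prop Iso b Dyson} line by line. It suffices to treat $F(\ell)=\exp\big(-\int_{\R}\ell(x)\chi(x)\,dx\big)$ with $\chi\geq 0$ continuous and compactly supported. The Girsanov machinery of Lemmas \ref{Lem chi}--\ref{Lem scale change} carries over with the effective mass $K+\chi$ in place of $\chi$: one uses the positive solutions of $\tfrac12 u''=(K+\chi)u$ on $\R$, normalizes the change-of-measure density so that its expectation under $\langle\cdot\rangle^{K}_{\beta,n}$ equals $1$ (using that $\tfrac12 p_{2}(\lambda(\cdot))$ is the stationary square-Bessel process), and factorizes $\langle\prod_{k}p_{\nu_{k}}(\lambda(x_{k}))\,e^{-\frac12\int_{\R}p_{2}(\lambda(x))\chi(x)\,dx}\rangle^{K}_{\beta,n}=\langle e^{-\frac12\int_{\R}p_{2}(\lambda(x))\chi(x)\,dx}\rangle^{K}_{\beta,n}\,\langle\prod_{k}p_{\nu_{k}}(\tilde\lambda(x_{k}))\rangle^{K}_{\beta,n}$, where $\tilde\lambda$ solves the drifted stationary SDE. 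A scale change as in Lemma \ref{Lem scale change} together with the moment formula just established express the symmetric moments of $\tilde\lambda$ through $P_{\nu}$ with $\Y_{kk}=G_{K,\chi}(x_{k},x_{k})$ and $\cY_{k-1\,k}=G_{K,\chi}(x_{k-1},x_{k})/G_{K,\chi}(x_{k-1},x_{k-1})$, $G_{K,\chi}$ denoting the Green's function of $\tfrac12\frac{d^{2}}{dx^{2}}-K-\chi$ on $\R$. One concludes, exactly as in the $\R_{+}$ case, using the Feynman--Kac identities $\int e^{-\int_{\R}L^{z}(\gamma)\chi(z)\,dz}\,\mu_{K}^{x,x}(d\gamma)=G_{K,\chi}(x,x)$ and $\int e^{-\int_{\R}L^{z}(\gamma)\chi(z)\,dz}\,\check\mu_{K}^{x,y}(d\gamma)=G_{K,\chi}(x,y)/G_{K,\chi}(x,x)$ for $x<y$ (see Section 3.2 in \cite{Lupu20131dimLoops}), together with the definition of the path measure $\mu_{K}^{\nu,x_{1},\dots,x_{m(\nu)}}$. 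For $\beta\in\{0,1,2,4\}$ this recovers the Gaussian (Ornstein--Uhlenbeck) case of Theorem \ref{ThmIsoDynkin}.
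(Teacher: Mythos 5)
Your proposal is correct and follows exactly the route the paper intends: the paper omits this proof precisely because it is the transposition of Propositions \ref{Prop Mom b Dyson} and \ref{Prop Iso b Dyson} that you carry out, and you correctly isolate the one genuinely non-verbatim step, namely that the $x$-dependence now sits in $\cY_{k-1\,k}$ rather than $\Y_{kk}$, so that the degree identity of Proposition \ref{Prop defn}(1) is needed to convert $-\sqrt{2K}\,\cY_{k-1\,k}\partial_{\cY_{k-1\,k}}$ into $-\sqrt{2K}\big(\sum_{k'\geq k}\nu_{k'}\big)\cdot(\,\cdot\,)+2\partial_{\Y_{kk}}$ and match it against the mean-reversion contribution $-\sqrt{2K}\big(\sum_{k'\geq k}\nu_{k'}\big)f_{k}(x)$ plus the old lower-order terms. (Note in passing that for this matching to work, the interaction term in \eqref{Eq b Dyson stat} must be read as $\beta\sum_{j'\neq j}dx/(\lambda_{j}-\lambda_{j'})$ without the extra $\sqrt{2K}$ factor, as your "extra mean-reversion drift only" reading implicitly assumes and as Proposition \ref{Prop stat b Dyson}(2) requires.)
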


\section{The case of general electrical networks: a construction for $n=2$ and further questions}
\label{Sec Graph}

\subsection{Formal polynomials for $n=2$}

In this section $n=2$, and $\beta$ is arbitrary, 
considered as a formal parameter.
Note that $d(\beta,n=2)=\beta+2$.
In Section \ref{Sec mome} we introduced the formal commuting polynomial variables $(\Y_{kk})_{k\geq 1}$. Here we further consider the commuting variables $(\Y_{kk'})_{1\leq k<k'}$, and by convention set
$\Y_{kk'}=\Y_{k'k}$ for $k'<k$.
Given $\tilde{\nu}=(\tilde{\nu}_{1},\dots, \tilde{\nu}_{m})$
with $\tilde{\nu}_{k}\in\mathbb{N}$ (value $0$ allowed),
$\mathfrak{P}_{\tilde{\nu},\beta}$ will be the following
multivariate polynomial in the variables
$(\Y_{kk'})_{1\leq k\leq k'\leq m}$:
\begin{displaymath}
\mathfrak{P}_{\tilde{\nu},\beta}:=
\operatorname{Perm}_{\frac{\beta+1}{2}}
((\Y_{f(i)f(j)})
_{1\leq i,j\leq \tilde{\nu}_{1}+\dots +\tilde{\nu}_{m}}),
\end{displaymath}
where $f$ is a map 
$f: \llbracket 1,\tilde{\nu}_{1}+\dots +\tilde{\nu}_{m}\rrbracket
\rightarrow \llbracket 1,m\rrbracket$,
such that for every $k\in\llbracket 1,m\rrbracket$,
$\vert f^{-1}(k)\vert=\tilde{\nu}_{k}$.
Recall the expression of the $\alpha$-permanents
\eqref{Eq Perm}.
It is clear that $\mathfrak{P}_{\tilde{\nu},\beta}$ does not depend on the particular choice of $f$.
In case $\tilde{\nu}_{1}=\dots = \tilde{\nu}_{m}=0$,
by convention we set $\mathfrak{P}_{\tilde{\nu},\beta}=1$.
Given $\nu$ a finite family of positive integers with
$\vert\nu\vert$ even,
let $\mathbf{k}_{\nu} : \llbracket 1,\vert\nu\vert\rrbracket
\mapsto \llbracket 1,m(\nu)\rrbracket$
be the map given by \eqref{Eq k nu}.
Let $\I_{\nu}$  be the following set of subsets of 
$\llbracket 1,\vert\nu\vert\rrbracket$:
\begin{displaymath}
\I_{\nu}:=
\{
I\subseteq\llbracket 1,\vert\nu\vert\rrbracket
\vert\,\forall k\in \llbracket 1,m(\nu)\rrbracket,
\vert\mathbf{k}_{\nu}^{-1}(k)\setminus I\vert
\text{ is even }
\},
\end{displaymath}
where $\vert\cdot\vert$
denotes the cardinal.
Note that necessarily, for every $I\in\I_{\nu}$,
the cardinal $\vert I\vert$ is even.
Let $\widehat{P}_{\nu,\beta}$ be
the following multivariate polynomial in the variables
$(\Y_{kk'})_{1\leq k\leq k'\leq m(\nu)}$:
\begin{displaymath}
\widehat{P}_{\nu,\beta}:=
\sum_{I\in \I_{\nu}}
2^{m(\nu)-\vert I\vert /2}
\Big(
\sum_{\substack{(\{a_{i},b_{i}\})_{1\leq i\leq \vert I\vert/2}\\
\text{partition in pairs}
\\
\text{of } I
}} 
\prod_{i=1}^{\vert I\vert/2}
\Y_{\mathbf{k}_{\nu}(a_{i})\mathbf{k}_{\nu}(b_{i})}
\Big)
\mathfrak{P}
_{(\frac{1}{2}\vert\mathbf{k}_{\nu}^{-1}(k)\setminus I\vert)_{1\leq k\leq m(\nu)},\beta}.
\end{displaymath}
By construction,
for  every $A$ monomial of
$\widehat{P}_{\nu,\beta}$ and 
every $k\in\llbracket 1,m(\nu)\rrbracket$,
\begin{equation}
\label{Eq degrees}
2\deg_{\Y_{kk}}A+
\sum_{\substack{1\leq k'\leq m(\nu)\\k'\neq k}}
\deg_{\Y_{kk'}}A = \nu_{k}.
\end{equation}

\begin{prop}
\label{Prop P nu n = 2}
Let $\nu$ be finite family of positive integers with
$\vert\nu\vert$ even.
$P_{\nu,\beta,n=2}$ is obtained from $\widehat{P}_{\nu,\beta}$
by replacing each variable
$\Y_{kk'}$ with $1\leq k<k'\leq m(\nu)$ by
$\Y_{kk}\prod_{k+1\leq r\leq k'}\cY_{r-1\,r}$:
\begin{displaymath}
P_{\nu,\beta,n=2}=
\widehat{P}_{\nu,\beta}\big(
\big(\Y_{kk'}=\Y_{kk}\prod_{k+1\leq r\leq k'}\cY_{r-1\,r}\big)_{1\leq k<k'\leq m(\nu)}\big).
\end{displaymath}
\end{prop}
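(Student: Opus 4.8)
The plan is to argue probabilistically, using the explicit description of $\beta$-Dyson's Brownian motion for $n=2$ from Remark \ref{Rmk beta Dyson} and the loop-soup description of square Bessel processes from Theorem \ref{Thm loop soup}, and then to pass from a large family of numerical evaluations to the polynomial identity. Recall that for $n=2$ one may take $\lambda_{1}(x)=\widetilde{W}(x)+\rho(x)$ and $\lambda_{2}(x)=\widetilde{W}(x)-\rho(x)$, with $\widetilde{W}$ a standard Brownian motion and $\rho$ an independent Bessel process of dimension $\beta+1$. By Proposition \ref{Prop b Dyson}(4) (equivalently Corollary \ref{Cor Le Jan Dyson}) and Theorem \ref{Thm loop soup}, $\rho^{2}$ is a square Bessel process of dimension $\beta+1$ started from $0$, hence has the law of the occupation field of a $1$D Brownian loop-soup $\calL^{(\beta+1)/2}_{\R_{+}}$; consequently its joint moments at points $x_{1},\dots,x_{m}$, with multiplicities, are given by $\Perm_{(\beta+1)/2}$ of $(G_{\R_{+}}(x_{i},x_{j}))=(2(x_{i}\wedge x_{j}))$, while the joint moments of $\widetilde{W}$ are Wick sums over pair partitions with covariance $x\wedge y$.

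First I would expand $\prod_{k=1}^{m(\nu)}p_{\nu_{k}}(\lambda(x_{k}))$ through $(\widetilde{W}+\rho)^{\nu_{k}}+(\widetilde{W}-\rho)^{\nu_{k}}=2\sum_{j\text{ even}}\binom{\nu_{k}}{j}\widetilde{W}^{\nu_{k}-j}\rho^{j}$: a surviving term assigns to every index of $\mathbf{k}_{\nu}^{-1}(k)$ either a $\widetilde{W}(x_{k})$- or a $\rho(x_{k})$-factor, with an even number of $\rho$-factors in each group; writing $I$ for the set of indices carrying a $\widetilde{W}$-factor this says exactly $I\in\I_{\nu}$, and each such $I$ comes with a global constant $2^{m(\nu)}$. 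Taking expectations, independence of $\widetilde{W}$ and $\rho$ factorises the $I$-term into a Wick sum over pair partitions of $I$ (covariances $x_{\mathbf{k}_{\nu}(a_{i})}\wedge x_{\mathbf{k}_{\nu}(b_{i})}$) times $\Perm_{(\beta+1)/2}$ of $(2(x_{k}\wedge x_{k'}))$ with $x_{k}$ repeated $\tfrac12|\mathbf{k}_{\nu}^{-1}(k)\setminus I|$ times. Collecting the powers of $2$ (the factor $2^{m(\nu)-|I|/2}$ in $\widehat{P}_{\nu,\beta}$ together with the factor $2$ on each of the $|I|/2$ substituted pair-variables reproduces exactly the $2^{m(\nu)}$ above), one obtains, for $0<x_{1}<\dots<x_{m(\nu)}$,
\[
\Big\langle\prod_{k=1}^{m(\nu)}p_{\nu_{k}}(\lambda(x_{k}))\Big\rangle_{\beta,2}^{\R_{+}}
=\widehat{P}_{\nu,\beta}\big((\Y_{kk'}=2(x_{k}\wedge x_{k'}))_{1\le k\le k'\le m(\nu)}\big).
\]
Since the $x_{k}$ are ordered, $2(x_{k}\wedge x_{k'})=2x_{k}$ equals $\mathsf{M}_{kk'}$ of Proposition \ref{Prop Gauss Perm} at $\Y_{kk}=2x_{k}$, $\cY_{k-1\,k}=1$, so the right side is $\widehat{P}_{\nu,\beta}(\Y_{kk'}=\mathsf{M}_{kk'})$ evaluated at that point; by Proposition \ref{Prop Mom b Dyson} the left side is $P_{\nu,\beta,2}$ evaluated at the same point. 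Hence $P_{\nu,\beta,2}$ and $\widehat{P}_{\nu,\beta}(\Y_{kk'}=\mathsf{M}_{kk'})$ agree on the whole family $\{\Y_{kk}=2x_{k},\ \cY_{k-1\,k}=1\}$.

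To remove the restriction $\cY_{k-1\,k}=1$, I would redo the computation for the drifted process $\widetilde{\lambda}$ of Lemma \ref{Lem tilde lambda}. By Lemma \ref{Lem scale change}, $\widetilde{\lambda}(x)=u_{\chi\downarrow}(x)\,\lambda(\psi_{\chi}(x))$ with $\lambda$ as above, so the same expansion applies with $x_{k}$ replaced by $\psi_{\chi}(x_{k})$ and an extra factor $u_{\chi\downarrow}(x_{k})^{\nu_{k}}$; pushing these factors into the covariances and permanent entries and using $G_{\R_{+},\chi}(x,y)=2u_{\chi\downarrow}(x)\psi_{\chi}(x)u_{\chi\downarrow}(y)$ (for $x\le y$) from \eqref{Eq Green psi} turns every $2(\psi_{\chi}(x_{k})\wedge\psi_{\chi}(x_{k'}))$ into $G_{\R_{+},\chi}(x_{k},x_{k'})$, giving $\langle\prod_{k}p_{\nu_{k}}(\widetilde{\lambda}(x_{k}))\rangle_{\beta,2}^{\R_{+}}=\widehat{P}_{\nu,\beta}(\Y_{kk'}=G_{\R_{+},\chi}(x_{k},x_{k'}))$. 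A telescoping product with \eqref{Eq Green psi} shows $G_{\R_{+},\chi}(x_{k},x_{k'})=\mathsf{M}_{kk'}$ once $\Y_{kk}=G_{\R_{+},\chi}(x_{k},x_{k})$ and $\cY_{k-1\,k}=u_{\chi\downarrow}(x_{k})/u_{\chi\downarrow}(x_{k-1})$, so by Lemma \ref{Lem Mom tilde Dyson} the polynomials $P_{\nu,\beta,2}$ and $\widehat{P}_{\nu,\beta}(\Y_{kk'}=\mathsf{M}_{kk'})$ agree under the substitution $\Y_{kk}=G_{\R_{+},\chi}(x_{k},x_{k})$, $\cY_{k-1\,k}=u_{\chi\downarrow}(x_{k})/u_{\chi\downarrow}(x_{k-1})$, for every admissible $\chi$ and $0<x_{1}<\dots<x_{m(\nu)}$. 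The main obstacle is the final step: one must check that, as $\chi$ and the $x_{k}$ vary, these values sweep out a Euclidean-open, hence Zariski-dense, subset of $\R_{>0}^{m(\nu)}\times(0,1)^{m(\nu)-1}$. This holds because $u_{\chi\downarrow}$ may be made to realise essentially any positive non-increasing convex profile that is affine near $0$ and eventually constant, so that the ratios $u_{\chi\downarrow}(x_{k})/u_{\chi\downarrow}(x_{k-1})$ and the diagonal values $G_{\R_{+},\chi}(x_{k},x_{k})=2u_{\chi\downarrow}(x_{k})^{2}\psi_{\chi}(x_{k})$ can be prescribed on a non-empty open set of targets; two polynomials agreeing on a Euclidean-open set coincide. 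Finally, since both sides are polynomials in $\beta$, it suffices to run this argument for $\beta\ge 0$.

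As an alternative one can bypass the density step by checking directly that $\widetilde{P}_{\nu}:=\widehat{P}_{\nu,\beta}(\Y_{kk'}=\mathsf{M}_{kk'})$ satisfies the three conditions of Definition \ref{Def Rec Pol} with $n=2$ and invoking the uniqueness of Proposition \ref{Prop defn}. Condition (1) reduces to $\widehat{P}_{\nu,\beta}$ with all variables set to $1$ being the moment $c(\nu,\beta,2)=\langle p_{\nu}(\lambda)\rangle_{\beta,2}$, a short computation from the $n=2$ density (a Gaussian times an independent variable with density $\propto|d|^{\beta}e^{-d^{2}/2}$) together with $\sum_{\sigma}(\tfrac{\beta+1}{2})^{\#\{\text{cycles of }\sigma\}}=\prod_{i=0}^{N-1}(\tfrac{\beta+1}{2}+i)$ for the permanent of the all-ones $N\times N$ matrix; condition (3) follows from the degree relations \eqref{Eq degrees} by tracking which monomials of $\widehat{P}_{\nu,\beta}$ survive after setting $\Y_{kk}=0$, their "crossing" variables $\Y_{k'k''}$ with $k'<k\le k''$ each contributing a power of $\cY_{k-1\,k}$ whose total is $\sum_{k\le r\le m(\nu)}\nu_{r}$. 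The computational heart of this route is condition (2), namely matching $\partial_{\Y_{kk}}\widetilde{P}_{\nu}^{k\leftarrow}$ with the $n=2$ specialisation of the recurrence \eqref{Eq rec poly} via a cofactor-type expansion of the permanents, and this is the step I would expect to take the most work.
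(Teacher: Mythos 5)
Your core computation is the same as the paper's: write $\lambda_{1,2}=\widetilde W\pm\rho$ via \eqref{Eq b Dyson n=2 Bessel}, expand $p_{\nu_k}$ so that only even powers of $\rho$ survive (which is exactly how $\I_{\nu}$ arises), and evaluate the expectation by independence, Wick's formula for $\widetilde W$ and the $(\beta+1)/2$-permanental structure of $\rho^2$. Your bookkeeping of the powers of $2$ (the $2^{m(\nu)-|I|/2}$ against the factor $2$ hidden in each substituted pair-covariance) is right, the identification with $\widehat P_{\nu,\beta}$ at $\Y_{kk'}=2(x_k\wedge x_{k'})$ is right, and the comparison with Proposition \ref{Prop Mom b Dyson} plus the extension from $\beta\geq 0$ to all $\beta$ by polynomiality of the coefficients mirrors the paper exactly. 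Where you genuinely diverge is the last step, recovering the dependence on the variables $\cY_{k-1\,k}$ from the identity obtained at $\cY_{k-1\,k}=1$.

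The paper closes this algebraically: both $P_{\nu,\beta,n=2}$ and $\widetilde P_{\nu,\beta}$ satisfy property (1) of Proposition \ref{Prop defn} (for $\widetilde P_{\nu,\beta}$ this is precisely what \eqref{Eq degrees} becomes after the substitution $\Y_{kk'}\mapsto\Y_{kk}\prod\cY_{r-1\,r}$), and that property expresses each monomial's degrees in the $\cY_{k-1\,k}$ as a function of its degrees in the $\Y_{k'k'}$; hence setting $\cY_{k-1\,k}=1$ merges no monomials, and equality at $\cY_{k-1\,k}=1$ already forces equality of the polynomials. You instead tilt by $\chi$ (Lemmas \ref{Lem tilde lambda}--\ref{Lem Mom tilde Dyson}) and invoke Zariski density. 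That route can be made to work, but as written it has a gap: the claim that the evaluation points $\big(G_{\R_+,\chi}(x_k,x_k)\big)_k$, $\big(u_{\chi\downarrow}(x_k)/u_{\chi\downarrow}(x_{k-1})\big)_k$ sweep out a Euclidean-open subset of $\R_{>0}^{m(\nu)}\times(0,1)^{m(\nu)-1}$ is asserted, not proved. It is true, but $u_{\chi\downarrow}$ is constrained (convex, non-increasing, affine near $0$, eventually constant, and the diagonal values satisfy $G_{\R_+,\chi}(x_k,x_k)>r_k^2\,G_{\R_+,\chi}(x_{k-1},x_{k-1})$), so one must actually exhibit enough independent degrees of freedom — e.g.\ an inductive construction over the intervals $(x_{k-1},x_k)$ showing that, given the ratio $r_k$, the increment $\int_{x_{k-1}}^{x_k}u_{\chi\downarrow}^{-2}$ still ranges over an open interval. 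Since you already have \eqref{Eq degrees} in hand (you use it for condition (3) in your alternative route), the degree argument gives you the conclusion immediately and lets you drop the $\chi$-tilting and the density claim altogether. Your sketched alternative via Definition \ref{Def Rec Pol} and the uniqueness in Proposition \ref{Prop defn} is the "direct combinatorial proof" the paper explicitly declines to carry out as too lengthy; as you note, verifying condition (2) there is the real work and you have not done it.
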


\begin{proof}
Let be
\begin{displaymath}
\widetilde{P}_{\nu,\beta}:=\widehat{P}_{\nu,\beta}\big(
\big(\Y_{kk'}=\Y_{kk}\prod_{k+1\leq r\leq k'}\cY_{r-1\,r}\big)_{1\leq k<k'\leq m(\nu)}\big).
\end{displaymath}
We want to show the equality 
$\widetilde{P}_{\nu,\beta}=P_{\nu,\beta,n=2}$.
Since a direct combinatorial proof would be a bit lengthy, we proceed differently.
Let $\beta\geq 0$ and let
$(\lambda(x)
=(\lambda_{1}(x),\lambda_{2}(x)))_{x\geq 0}$
be the $\beta$-Dyson's Brownian motion \eqref{Eq b Dyson}
in the case $n=2$.
We use its construction through \eqref{Eq b Dyson n=2 Bessel}.
We claim that
for $x_{1},x_{2},\dots,x_{m(\nu)}\in\R_{+}$,
\begin{displaymath}
\Big\langle \prod_{k=1}^{m(\nu)}
p_{\nu_{k}}(\lambda(x_{k}))
\Big\rangle_{\beta,n=2}^{\R_{+}} =
\widehat{P}_{\nu,\beta}\big(
\big(\Y_{kk'}=G_{\R_{+}}(x_{k-1},x_{k})\big)_{1\leq k\leq k'\leq m(\nu)}\big). 
\end{displaymath}
Indeed, in the expansion of
\begin{displaymath}
\Big(\widetilde{W}(x_{k})+\rho(x_{k})\Big)^{\nu_{k}}
+\Big(\widetilde{W}(x_{k})-\rho(x_{k})\Big)^{\nu_{k}}
\end{displaymath}
only enter the even powers of $\rho(x_{k})$, 
which is how $\I_{\nu}$ appears.
Then one uses that the square Bessel process $(\rho(x))_{x\geq 0}$
is a $(\beta+1)/2$-permanental field with kernel
$(G_{\R_{+}}(x,y))_{x,y\in\R_{+}}$.
Because of the particular form of $G_{\R_{+}}$, 
we have that
for $x_{1} \leq x_{2}\leq\dots\leq x_{m(\nu)}\in\R_{+}$,
\begin{displaymath}
\Big\langle \prod_{k=1}^{m(\nu)}
p_{\nu_{k}}(\lambda(x_{k}))
\Big\rangle_{\beta,n=2}^{\R_{+}} =
\widetilde{P}_{\nu,\beta}((\Y_{kk}=2x_{k})_{1\leq k\leq m(\nu)},
(\cY_{k-1\,k}=1)_{2\leq k\leq m(\nu)}).
\end{displaymath}
By combining with Corollary \ref{Cor mome P nu},
we get that the following multivariate polynomials in the variables
$(\Y_{kk})_{1\leq k\leq m(\nu)}$ are equal for $\beta\geq 0$:
\begin{displaymath}
\widetilde{P}_{\nu,\beta}(
(\cY_{k-1\,k}=1)_{2\leq k\leq m(\nu)})
=
P_{\nu,\beta,n=2}((\cY_{k-1\,k}=1)_{2\leq k\leq m(\nu)}).
\end{displaymath}
Since the coefficients of both are polynomials in $\beta$, 
the equality above holds for general $\beta$.
To conclude the equality 
$\widetilde{P}_{\nu,\beta}=P_{\nu,\beta,n=2}$,
we have to deal with the variables 
$(\cY_{k-1\,k})_{2\leq k\leq m(\nu)}$.
For this we use that both in case of $P_{\nu,\beta,n=2}$
and in case of $\widetilde{P}_{\nu,\beta}$, 
each monomial satisfies \eqref{Eq deg A}.
For $\widetilde{P}_{\nu,\beta}$ this follows from
\eqref{Eq degrees}.
\end{proof}

\subsection{A construction on discrete electrical networks for $n=2$}
\label{Sec a construction}

Let $\G=(V,E)$ be an undirected connected graph, 
with $V$ finite. 
We do not allow multiple edges or self-loops. 
The edges $\{x,y\}\in E$ are endowed with conductances $C(x,y)=C(y,x)>0$. 
There is also a non-uniformly zero killing measure
$(K(x))_{x\in V}$, with 
$K(x)\geq 0$.
We see $\G$ as an electrical network. 
Let $\Delta_{\G}$ denote the discrete Laplacian
\begin{displaymath}
(\Delta_{\G} f)(x)=
\sum_{y\sim x} C(x,y)(f(y)-f(x))
.
\end{displaymath}
Let $(G_{\G,K}(x,y))_{x,y\in V}$ be the massive Green's function
$G_{\G,K}=(-\Delta_{\G} + K)^{-1}$.
The (massive) real scalar Gaussian free field (GFF)
is the centered random Gaussian field on $V$ with
covariance $G_{\G,K}$, or equivalently with density
\begin{equation}
\label{EqDensityGFF}
\dfrac{1}{((2\pi)^{\vert V\vert} \det G_{\G,K})^{\frac{1}{2}}}
\exp\Big(
-\dfrac{1}{2}\sum_{x\in V}K(x)\varphi(x)^{2}
-\dfrac{1}{2}\sum_{\{x,y\}\in E}C(x,y)(\varphi(y)-\varphi(x))^{2}
\Big).
\end{equation}

Let $X_{t}$ be the continuous time Markov jump process to nearest neighbors with jump rates given by the conductances. 
The process $X_{t}$ is also killed by $K$. 
Let $\zeta\in (0,+\infty]$ be 
the first time $X_{t}$ gets killed by $K$. 
Let $p_{\G,K}(t,x,y)$ be the transition probabilities of
$(X_{t})_{0\leq t <\zeta}$. Then
$p_{\G,K}(t,x,y)=p_{\G,K}(t,y,x)$ and
\begin{displaymath}
G_{\G,K}(x,y)=\int_{0}^{+\infty}p_{\G,K}(t,x,y) dt.
\end{displaymath}
Let $\mathbb{P}_{\G,K}^{t,x,y}$ be the bridge probability measure from $x$ to $y$, where one conditions on $t<\zeta$. 
For $x,y\in V$, let
$\mu^{x,y}_{\G,K}$ be the following measure on paths:
\begin{displaymath}
\mu^{x,y}_{\G,K}(\cdot):=
\int_{0}^{+\infty}\mathbb{P}_{\G,K}^{t,x,y}(\cdot)p_{\G,K}(t,x,y) dt.
\end{displaymath}
It is the analogue of \eqref{Eq paths 1D}.
The total mass of $\mu^{x,y}_{\G,K}$ is
$G_{\G,K}(x,y)$, and the image of $\mu^{x,y}_{\G,K}$
by time reversal is $\mu^{y,x}_{\G,K}$.
Similarly, one defines the measure on (rooted) loops by
\begin{displaymath}
\mu_{\G,K}^{\rm loop}(d\gamma):=
\dfrac{1}{T(\gamma)}
\sum_{x\in V}
\mu^{x,x}_{\G,K}(d\gamma),
\end{displaymath}
where $T(\gamma)$ denotes the duration of the loop
$\gamma$.
It is the analogue of \eqref{Eq loop 1D}.
The measure $\mu_{\G,K}^{\rm loop}$ has an infinite total mass because
it puts an infinite mass on trivial "loops" that stay in one vertex.
For $\alpha>0$, one considers Poisson point processes
$\calL^{\alpha}_{\G,K}$ of intensity $\alpha\mu_{\G,K}^{\rm loop}$.
These are (continuous time) \textit{random walk loop soups}.
For details, see
\cite{LawlerFerreras07RWLoopSoup,
LawlerLimic2010RW,LeJan2010LoopsRenorm,LeJan2011Loops}.

For a continuous time path $\gamma$ on $\G$ of duration $T(\gamma)$
and $x\in V$, we denote
\begin{displaymath}
L^{x}(\gamma):=\int_{0}^{T(\gamma)}\1_{\gamma(s)=x} ds.
\end{displaymath}
Further,
\begin{displaymath}
L^{x}(\calL^{\alpha}_{\G,K}):=
\sum_{\gamma\in\calL^{\alpha}_{\G,K}}L^{x}(\gamma).
\end{displaymath}
One has equality in law between
$(L^{x}(\calL^{\frac{1}{2}}_{\G,K}))_{x\in V}$
and $(\frac{1}{2}\phi_{\G,K}(x)^{2})_{x\in V}$,
where $\phi_{\G,K}$ is the GFF distributed according to
\eqref{EqDensityGFF}
\cite{LeJan2010LoopsRenorm,LeJan2011Loops}.
This is the analogue of \eqref{Iso Le Jan 1D}.
For general $\alpha>0$, the occupation field
$(L^{x}(\calL^{\alpha}_{\G,K}))_{x\in V}$
is the $\alpha$-permanental field with kernel
$G_{\G,K}$ \cite{LeJan2010LoopsRenorm,LeJan2011Loops,LeJanMarcusRosen12Loops}.
In this sense it is analogous to squared Bessel processes.
If $(\chi(x))_{x\in V}\in \R^{V}$ is such that
$-\Delta_{\G} + K - \chi$ is positive definite, then
\begin{equation}
\label{Eq Laplace alpha Perm}
\E\Big[\exp\Big(\sum_{x\in V} \chi(x)L^{x}(\calL^{\alpha}_{\G,K})\Big)\Big]
=
\bigg(
\dfrac{\det ( -\Delta_{\G} + K )}{\det ( -\Delta_{\G} + K - \chi )}
\bigg)^{\alpha}.
\end{equation}
See Corollary 5 in \cite{LeJan2010LoopsRenorm}
and Corollary 1, Section 4.1 in \cite{LeJan2011Loops}.

Now we proceed with our construction.
Fix $\beta>-1$.
Let $\alpha=\frac{1}{2}d(\beta,n=2)=\frac{\beta+2}{2}>\frac{1}{2}$.
Let $\phi_{\G,K}$ be a GFF distributed according to
\eqref{EqDensityGFF}, and
$\calL^{\alpha-\frac{1}{2}}_{\G,K}$ an 
independent random walk loop soup. For $x\in V$ we set
\begin{displaymath}
\lambda_{1}(x):=\dfrac{1}{\sqrt{2}}\phi_{\G,K}(x)
+\sqrt{L^{x}(\calL^{\alpha-\frac{1}{2}}_{\G,K})},
\qquad
\lambda_{2}(x):=\dfrac{1}{\sqrt{2}}\phi_{\G,K}(x)
-\sqrt{L^{x}(\calL^{\alpha-\frac{1}{2}}_{\G,K})},
\end{displaymath}
and 
$\lambda:=(\lambda_{1}(x),\lambda_{2}(x))_{x\in V}$.
$\langle\cdot\rangle_{\beta,n=2}^{\G,K}$ will denote the
expectation with respect to $\lambda$.
As in Section \ref{Sec Dynkin Dyson},
$\Upsilon=(\gamma_{1},\dots,\gamma_{J(\Upsilon)})$
will denote a generic family of continuous time paths,
this time on the graph $\G$.
For $x\in V$,
\begin{displaymath}
L^{x}(\Upsilon):=
\sum_{i=1}^{J(\Upsilon)}L^{x}(\gamma_{i}),
\end{displaymath}
and $L(\Upsilon)$ will denote the occupation field of 
$\Upsilon$, $x\mapsto L^{x}(\Upsilon)$.
Given $\nu$ a finite family of positive integers with
$\vert\nu\vert$ even, and
$x_{1},x_{2},\dots,x_{m(\nu)}\in V$,
$\hat{\mu}_{\G,K}^{\nu,\beta,x_{1},\dots,x_{m(\nu)}}$
will denote the measure on families of
$\vert\nu\vert/2$ paths on $\G$
obtained by substituting in the polynomial
$\widehat{P}_{\nu,\beta}$
for each variable $\Y_{kk'}$,
$1\leq k\leq k'\leq m(\nu)$,
the measure $\mu_{\G,K}^{x_{k},x_{k'}}$.
The order of the paths will not matter.

\begin{prop}
The following holds.
\begin{enumerate}
\item For every $x\in V$,
$(\lambda_{1}(x)/\sqrt{G_{\G,K}(x,x)},
\lambda_{2}(x)/\sqrt{G_{\G,K}(x,x)})$
is distributed, up to reordering,
according to \eqref{Eq GbE} for $n=2$.
\item Let $x,y\in V$. Let
\begin{equation}
\label{Eq eta}
\eta=\dfrac{G_{\G,K}(x,x)G_{\G,K}(y,y)}
{G_{\G,K}(x,y)^{2}}\geq 1.
\end{equation}
Then the couple 
$(\sqrt{2}\lambda(x)/\sqrt{G_{\G,K}(x,x)},
\sqrt{2\eta}\lambda(y)/\sqrt{G_{\G,K}(y,y)})$
is distributed like the
$\beta$-Dyson's Brownian motion
\eqref{Eq b Dyson} at points $1$ and $\eta$,
for $n=2$.
\item Let $\nu$ be finite family of positive integers with
$\vert\nu\vert$ even and $x_{1},x_{2},\dots,x_{m(\nu)}\in V$.
Then
\begin{displaymath}
\Big\langle\prod_{k=1}^{m(\nu)}
p_{\nu_{k}}(\lambda(x_{k}))
\Big\rangle^{\G,K}_{\beta,n=2}
=\widehat{P}_{\nu,\beta}
((\Y_{kk'}=G_{\G,K}(x_{k},x_{k'}))_{1\leq k\leq k'\leq m(\nu)}).
\end{displaymath}
\item (BFS-Dynkin's isomorphism)
Moreover, given $F$ a measurable bounded function on
$\R^{V}$,
\begin{equation}
\label{Eq Dynkin graph n = 2}
\Big\langle\prod_{k=1}^{m(\nu)}
p_{\nu_{k}}(\lambda(x_{k}))
F\Big(\dfrac{1}{2}p_{2}(\lambda)\Big)
\Big\rangle^{\G,K}_{\beta,n=2}
=
\int_{\Upsilon}
\Big\langle
F\Big(\dfrac{1}{2}p_{2}(\lambda)
+L(\Upsilon)\Big)
\Big\rangle^{\G,K}_{\beta,n=2}
\hat{\mu}_{\G,K}^{\nu,\beta,x_{1},\dots,x_{m(\nu)}}
(d\Upsilon).
\end{equation}
\item For $\beta\in\{1,2,4\}$,
$(\lambda_{1}(x),\lambda_{2}(x))_{x\in V}$
is distributed like the ordered family of eigenvalues in a
GFF with values in $2\times 2$
real symmetric $(\beta=1)$, complex Hermitian $(\beta=2)$,
resp. quaternionic Hermitian $(\beta=4)$ matrices,
with density proportional to
\begin{equation}
\label{Eq matrix GFF}
\exp\Big(
-\dfrac{1}{2}\sum_{x\in V}K(x)\Tr(M(x)^{2})
-\dfrac{1}{2}\sum_{\{x,y\}\in E}
C(x,y)\Tr((M(y)-M(x))^{2})
\Big).
\end{equation}
\item Assume that $\beta>0$.
Let $\phi_{1}$ and $\phi_{2}$ be two independent scalar GFFs
distributed according to \eqref{EqDensityGFF}.
$\calL^{\alpha-1}_{\G,K}$ be a random walk loop soup independent from
$(\phi_{1},\phi_{2})$, with still $\alpha=\frac{\beta+2}{2}$.
Then $(\lambda_{1}(x),\lambda_{2}(x))_{x\in V}$
is distributed as the ordered family of eigenvalues 
in the matrix-valued field
\begin{equation}
\label{Eq another matrix}
\left(
\begin{array}{cc}
\phi_{1}(x) & \sqrt{L^{x}(\calL^{\alpha-1}_{\G,K})} \\ 
\sqrt{L^{x}(\calL^{\alpha-1}_{\G,K})} & \phi_{2}(x)
\end{array} 
\right),~~x\in V.
\end{equation}
\item Given another killing measure 
$\widetilde{K}\in\R_{+}^{V}$, non uniformly zero,
and $\tilde{\lambda}=(\tilde{\lambda}_{1},\tilde{\lambda}_{2})$
the field obtained by using $\widetilde{K}$ instead of $K$,
the density of the law of $\tilde{\lambda}$
with respect to that of $\lambda$ is
\begin{displaymath}
\bigg(
\dfrac{\det ( -\Delta_{\G} + \widetilde{K} )}
{\det ( -\Delta_{\G} + K)}
\bigg)^{\frac{\beta+2}{2}}
\exp\Big(-\dfrac{1}{2}\sum_{x\in V}
(\widetilde{K}(x)-K(x))p_{2}(\lambda(x))\Big).
\end{displaymath}
\end{enumerate}
\end{prop}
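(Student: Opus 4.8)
The plan is to run everything through the decomposition $\lambda_{1}(x)+\lambda_{2}(x)=\sqrt{2}\,\phi_{\G,K}(x)$, $\lambda_{1}(x)-\lambda_{2}(x)=2\sqrt{L^{x}(\calL^{\alpha-\frac12}_{\G,K})}$, hence $\tfrac12 p_{2}(\lambda(x))=\tfrac12\phi_{\G,K}(x)^{2}+L^{x}(\calL^{\alpha-\frac12}_{\G,K})$, together with the independence of the GFF and the loop soup. With $\alpha=\tfrac{\beta+2}{2}=\tfrac12 d(\beta,2)$ and $\alpha-\tfrac12=\tfrac{\beta+1}{2}$, everything reduces to the Gaussian calculus for $\phi_{\G,K}$ and to the permanental (square-Bessel) calculus for loop-soup occupation fields.

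I would treat (1) and (2) first. For (1), at a fixed vertex $\phi_{\G,K}(x)\sim\mathcal N(0,G_{\G,K}(x,x))$ and $L^{x}(\calL^{\alpha-\frac12}_{\G,K})\sim\operatorname{Gamma}(\tfrac{\beta+1}{2},G_{\G,K}(x,x)^{-1})$ are independent (Theorem~\ref{Thm loop soup}), and the substitution $(\lambda_{1},\lambda_{2})\mapsto(\tfrac{\lambda_{1}+\lambda_{2}}{2},(\tfrac{\lambda_{1}-\lambda_{2}}{2})^{2})$ turns the $n=2$ density \eqref{Eq GbE} into a product of an $\mathcal N(0,\tfrac12)$ and a $\operatorname{Gamma}(\tfrac{\beta+1}{2},1)$, with the scalings matching. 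For (2), $(\phi_{\G,K}(x),\phi_{\G,K}(y))$ is centred bivariate Gaussian with the restricted covariance, while $(L^{x},L^{y})$ is the $\tfrac{\beta+1}{2}$-permanental field with the restricted kernel, a law determined by $G_{\G,K}(x,x)$, $G_{\G,K}(x,y)$, $G_{\G,K}(y,y)$ through its $\operatorname{Perm}_{\frac{\beta+1}{2}}$ moments. After the affine rescalings in the statement the covariance becomes $\bigl(\begin{smallmatrix}1&1\\1&\eta\end{smallmatrix}\bigr)$ and the permanental kernel becomes $\bigl(\begin{smallmatrix}2&2\\2&2\eta\end{smallmatrix}\bigr)$; these are exactly the two-point covariance of $\phi_{\R_{+}}$ and the two-point kernel of $L(\calL^{\frac{\beta+1}{2}}_{\R_{+}})$ at the times $1$ and $\eta$, so by the Bessel realization of $\beta$-Dyson's Brownian motion for $n=2$ in Remark~\ref{Rmk beta Dyson} (where $\rho$ has dimension $\beta+1$, so $\rho^{2}$ is a $\mathrm{BESQ}^{\beta+1}$, i.e.\ $L(\calL^{\frac{\beta+1}{2}}_{\R_{+}})$ by Theorem~\ref{Thm loop soup}) this recovers $\beta$-Dyson at $1$ and $\eta$.

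For (3), I would expand $p_{\nu_{k}}(\lambda(x_{k}))=(A_{k}+B_{k})^{\nu_{k}}+(A_{k}-B_{k})^{\nu_{k}}=2\sum_{j\ge0}\binom{\nu_{k}}{2j}A_{k}^{\nu_{k}-2j}B_{k}^{2j}$ with $A_{k}=\tfrac1{\sqrt2}\phi_{\G,K}(x_{k})$, $B_{k}=\sqrt{L^{x_{k}}}$; only even powers of the $B_{k}$ survive, which is how $\I_{\nu}$ and the ``Gaussian'' subset $I$ enter. Using independence to factor the expectation, Wick's rule on $\langle\prod_{k}A_{k}^{\nu_{k}-2j_{k}}\rangle$ gives the sum over pairings of $I$ of $\prod\tfrac12 G_{\G,K}$, and Theorem~\ref{Thm loop soup} on $\E[\prod_{k}(L^{x_{k}})^{j_{k}}]$ gives $\operatorname{Perm}_{\frac{\beta+1}{2}}$ of the Green matrix with multiplicities, i.e.\ $\mathfrak P$; the factors $2^{m(\nu)}$ (one per vertex) and $2^{-|I|/2}$ (one per $A_{k}$) combine to $2^{m(\nu)-|I|/2}$, reproducing $\widehat P_{\nu,\beta}$ (and, by Proposition~\ref{Prop P nu n = 2}, extending Proposition~\ref{Prop Mom b Dyson}). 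For (7), \eqref{EqDensityGFF} gives the density of $\mathrm{law}(\phi_{\G,\widetilde K})$ against $\mathrm{law}(\phi_{\G,K})$ on $\R^{V}$ as $\bigl(\det(-\Delta_{\G}+\widetilde K)/\det(-\Delta_{\G}+K)\bigr)^{1/2}\exp\bigl(-\tfrac12\sum_{x}(\widetilde K(x)-K(x))\varphi(x)^{2}\bigr)$, and \eqref{Eq Laplace alpha Perm} shows that tilting $L(\calL^{\alpha-\frac12}_{\G,K})$ by $\bigl(\det(-\Delta_{\G}+\widetilde K)/\det(-\Delta_{\G}+K)\bigr)^{\alpha-\frac12}\exp(-\sum_{x}(\widetilde K(x)-K(x))\ell(x))$ produces the $(\alpha-\tfrac12)$-permanental field with kernel $G_{\G,\widetilde K}$; multiplying the two independent densities yields a function of $\varphi^{2}+2\ell=p_{2}(\lambda)$ only, hence the claimed density with exponent $\alpha=\tfrac{\beta+2}{2}$. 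Then (4) follows from (3) and (7): it is enough to take $F(\ell)=\exp(-\sum_{x}\chi(x)\ell(x))$ with $\chi\ge0$, and, writing $\widetilde K=K+\chi$, the left side of \eqref{Eq Dynkin graph n = 2} equals $\bigl(\det(-\Delta_{\G}+K)/\det(-\Delta_{\G}+\widetilde K)\bigr)^{\alpha}\widehat P_{\nu,\beta}\bigl((\Y_{kk'}=G_{\G,\widetilde K}(x_{k},x_{k'}))\bigr)$ by (7) then (3), while on the right side $\langle F(\tfrac12 p_{2}(\lambda)+L(\Upsilon))\rangle^{\G,K}_{\beta,2}=e^{-\sum_{x}\chi(x)L^{x}(\Upsilon)}\bigl(\det(-\Delta_{\G}+K)/\det(-\Delta_{\G}+\widetilde K)\bigr)^{\alpha}$ and $\int_{\gamma}e^{-\sum_{z}\chi(z)L^{z}(\gamma)}\mu^{x_{k},x_{k'}}_{\G,K}(d\gamma)=G_{\G,\widetilde K}(x_{k},x_{k'})$ (Feynman--Kac), so integration against $\hat\mu^{\nu,\beta,x_{1},\dots,x_{m(\nu)}}_{\G,K}$ distributes over the monomials of $\widehat P_{\nu,\beta}$ and produces the same expression.

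Finally, (5) and (6) come from computing eigenvalues of a $2\times2$ matrix as $\tfrac12\operatorname{tr}M\pm\sqrt{(\tfrac{M_{11}-M_{22}}{2})^{2}+\|M_{12}\|^{2}}$. In (5), splitting $M(x)=\tfrac12(\operatorname{tr}M(x))\mathrm{Id}+M_{0}(x)$ shows from \eqref{Eq matrix GFF} that $\tfrac1{\sqrt2}\operatorname{tr}M$ is a scalar GFF of covariance $G_{\G,K}$, independent of the $\beta+1$ real components of $M_{0}$ (namely $\tfrac{M_{11}-M_{22}}{2}$ and the $\beta$ entries of $M_{12}$), each a scalar GFF of covariance $\tfrac12 G_{\G,K}$; the radical then squares to $\tfrac12\sum_{i=1}^{\beta+1}\psi_{i}^{2}$, which by Le Jan's isomorphism for the $(\beta+1)$-component vector GFF equals in law $L(\calL^{\frac{\beta+1}{2}}_{\G,K})=L(\calL^{\alpha-\frac12}_{\G,K})$, so $(\lambda_{1},\lambda_{2})\stackrel{\text{law}}{=}\tfrac1{\sqrt2}\phi_{\G,K}\pm\sqrt{L(\calL^{\alpha-\frac12}_{\G,K})}$. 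For (6), the same computation on \eqref{Eq another matrix} makes the radical square to $(\tfrac{\phi_{1}-\phi_{2}}{2})^{2}+L^{x}(\calL^{\alpha-1}_{\G,K})$; since $\tfrac12\bigl(\tfrac{\phi_{1}-\phi_{2}}{\sqrt2}\bigr)^{2}\stackrel{\text{law}}{=}L(\calL^{\frac12}_{\G,K})$ and the two loop soups are independent, their occupation fields superpose to $L(\calL^{\alpha-\frac12}_{\G,K})$ (independent of the trace part), and $\beta>0$ is used only so that $\alpha-1=\tfrac{\beta}{2}>0$. The main obstacle I anticipate is (4): making the reduction to exponential $F$ and the Girsanov-type tilting of the loop soup rigorous at the level of the joint law of $(\phi_{\G,K},\calL^{\alpha-\frac12}_{\G,K})$, and checking that the path-measure substitution defining $\hat\mu^{\nu,\beta,x_{1},\dots,x_{m(\nu)}}_{\G,K}$ commutes with the Feynman--Kac tilting monomial by monomial in $\widehat P_{\nu,\beta}$ --- precisely where the explicit form of $\widehat P_{\nu,\beta}$ from (3) is needed. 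A minor secondary point is, in (2), the uniqueness of a permanental field given its kernel and the action of the affine rescalings on that kernel.
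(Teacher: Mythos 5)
Your proposal is correct. Parts (1), (2), (3), (5), (6) and (7) follow essentially the paper's own argument: the decomposition $\lambda_{1,2}=\frac{1}{\sqrt{2}}\phi_{\G,K}\pm\sqrt{L(\calL^{\alpha-\frac{1}{2}}_{\G,K})}$, the matching of Gaussian covariances and permanental kernels (with moments characterizing the law), the binomial expansion killing odd powers of $\sqrt{L^{x_k}}$, the trace/traceless splitting of $M(x)$, the superposition $\calL^{\alpha-1}_{\G,K}\cup\widetilde{\calL}^{\frac{1}{2}}_{\G,K}$, and the product of the two Radon--Nikodym densities. Where you genuinely diverge is part (4): the paper deduces \eqref{Eq Dynkin graph n = 2} directly by combining the classical BFS-Dynkin isomorphism for $\phi_{\G,K}$ with the Palm-identity version of the isomorphism for the permanental occupation field $L(\calL^{\alpha-\frac{1}{2}}_{\G,K})$ (citing Le Jan--Marcus--Rosen), whereas you bootstrap it from (3) and (7) by testing against $F(\ell)=\exp(-\sum_{x}\chi(x)\ell(x))$, absorbing $\chi$ into the killing measure $\widetilde K=K+\chi$, and matching both sides to $\bigl(\det(-\Delta_{\G}+K)/\det(-\Delta_{\G}+\widetilde K)\bigr)^{\alpha}\,\widehat P_{\nu,\beta}\bigl((\Y_{kk'}=G_{\G,\widetilde K}(x_k,x_{k'}))\bigr)$ via Feynman--Kac on each monomial of $\widehat P_{\nu,\beta}$. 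This is exactly the strategy the paper itself uses for the continuum analogue (Proposition \ref{Prop Iso b Dyson}), transposed to the discrete setting; it checks out (the monomial-by-monomial factorization you worry about is immediate since $L(\Upsilon)$ is additive over the product path measures, and exponential functionals determine the law on $\R_{+}^{V}$ by a monotone class argument). What each approach buys: the paper's is shorter given the cited isomorphism theorems and does not require (7) as a prerequisite; yours is more self-contained, avoids the Palm-identity machinery for permanental fields, and makes the role of the explicit polynomial $\widehat P_{\nu,\beta}$ transparent.
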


\begin{proof}
(1) This follows from Proposition \ref{Proberty GbE} and the fact that 
$\phi_{\G,K}(x)/\sqrt{G_{\G,K}(x,x)}$
is distributed according to $\mathcal{N}(0,1)$,
and 
$L^{x}(\calL^{\alpha-\frac{1}{2}}_{\G,K})/\sqrt{G_{\G,K}(x,x)}$
according to
$\operatorname{Gamma}\big(\alpha-\frac{1}{2},1\big)$.

(2) One uses the decomposition \eqref{Eq b Dyson n=2 Bessel}
of a $\beta$-Dyson's Brownian motion for $n=2$.
Indeed,
$(\sqrt{2}\phi_{\G,K}(x)/\sqrt{G_{\G,K}(x,x)},
\sqrt{2\eta}\phi_{\G,K}(y)/\sqrt{G_{\G,K}(y,y)})$
and $(\phi_{\R_{+}}(1),\phi_{\R_{+}}(\eta))$
are two Gaussian vectors with the same distribution, with covariance matrix given by
\begin{equation}
\label{Eq cov matrix}
\left(
\begin{array}{cc}
2 & 2 \\ 
2 & 2\eta
\end{array} 
\right)
.
\end{equation}
Moreover, $(\sqrt{2}L^{x}(\calL^{\alpha-\frac{1}{2}}_{\G,K})/\sqrt{G_{\G,K}(x,x)},
\sqrt{2\eta}L^{y}(\calL^{\alpha-\frac{1}{2}}_{\G,K})/\sqrt{G_{\G,K}(y,y)})$
is distributed as $(\rho(1),\rho(\eta))$,
a two-dimensional marginal of a Bessel process
of dimension $\beta+1$.
The latter can be seen using the moments, that characterize the 
finite-dimensional marginals of the Bessel process $\rho$.
In both cases those are $(\beta+1)/2$-permanents,
with coefficients given by the matrix \eqref{Eq cov matrix}.

(3) This follows by expanding
\begin{equation}
\label{Eq to expand}
\Big(\dfrac{1}{\sqrt{2}}\phi_{\G,K}(x_{k})
+\sqrt{L^{x_{k}}(\calL^{\alpha-\frac{1}{2}}_{\G,K})}\Big)^{\nu_{k}} + \Big(
\dfrac{1}{\sqrt{2}}\phi_{\G,K}(x_{k})
-\sqrt{L^{x_{k}}(\calL^{\alpha-\frac{1}{2}}_{\G,K})}
\Big)^{\nu_{k}}
\end{equation}
for every $k\in\llbracket 1, m(\nu)\rrbracket$.
In this decomposition only the integer powers of
$L^{x_{k}}(\calL^{\alpha-\frac{1}{2}}_{\G,K})$
survive cancellation.
The moments of $(\phi_{\G,K}(x_{k}))_{1\leq k\leq m(\nu)}$
give rise to the Wick part in $\widehat{P}_{\nu,\beta}$
(sums over partitions in pairs).
The moments of 
$(L^{x_{k}}(\calL^{\alpha-\frac{1}{2}}_{\G,K}))_{1\leq k\leq m(\nu)}$
give rise to the permanental part in $\widehat{P}_{\nu,\beta}$.

(4) The GFF $\phi_{\G,K}$ satisfies the BFS-Dynkin isomorphism;
see \cite[Theorem~2.2]{BFS82Loop}, 
\cite[Theorems~6.1,~6.2]{Dynkin1984Isomorphism},
and \cite[Theorem~1]{Dynkin1984IsomorphismPresentation}.
Moreover, there is a version of 
BFS-Dynkin isomorphism for the occupation field
$L(\calL^{\alpha-\frac{1}{2}}_{\G,K})$
obtained by applying Palm's identity to Poisson point processes;
see \cite[Theorem~1.3]{LeJanMarcusRosen12Loops} 
and \cite[Sections~3.4,~4.3]{Lupu20131dimLoops}.
More precisely,
for any $y_{1},\dots,y_{r}\in V$,
\begin{multline*}
\E\Big[
\prod_{i=1}^{r}
L^{y_{i}}(\calL^{\alpha-\frac{1}{2}}_{\G,K})
F(L(\calL^{\alpha-\frac{1}{2}}_{\G,K}))
\Big] =
\\
\sum_{\substack{\sigma \text{ permutation}
\\ \text{of } \{1,2,\dots ,r\}}}
\Big(\alpha-\dfrac{1}{2}\Big)^{\# \text{ cycles of } \sigma}
\int\displaylimits_{\gamma_{1},\dots ,\gamma_{r}}
\E\Big[
F(L(\calL^{\alpha-\frac{1}{2}}_{\G,K})
+L(\gamma_{1})+\dots + L(\gamma_{r}))
\Big]
\prod_{i=1}^{r}
\mu^{y_{i},y_{\sigma(i)}}_{\G,K}
(d\gamma_{i}).
\end{multline*}
Further, by expanding \eqref{Eq to expand}
for $k\in\llbracket 1, m(\nu)\rrbracket$,
we get that
$\prod_{k=1}^{m(\nu)}p_{\nu_{k}}(\lambda(x_{k}))$
is actually a polynomial in the variables
$(\phi_{\G,K}(x_{k}))_{1\leq k\leq m(\nu)}$
and
$(L^{x_{k}}(\calL^{\alpha-\frac{1}{2}}_{\G,K}))_{1\leq k\leq m(\nu)}$,
the non-integer powers of 
$L^{x_{k}}(\calL^{\alpha-\frac{1}{2}}_{\G,K})$
cancelling out.
Moreover,
\begin{displaymath}
\dfrac{1}{2}p_{2}(\lambda)
=
\dfrac{1}{2}\phi_{\G,K}^{2} + L(\calL^{\alpha-\frac{1}{2}}_{\G,K}).
\end{displaymath}
Since the fields $\phi_{\G,K}$ and 
$L(\calL^{\alpha-\frac{1}{2}}_{\G,K})$
are independent, on gets
\eqref{Eq Dynkin graph n = 2}
by combining the BFS-Dynkin isomorphism for 
$\phi_{\G,K}$ and 
the BFS-Dynkin isomorphism for $L(\calL^{\alpha-\frac{1}{2}}_{\G,K})$.

(5) Recall that for all three matrix spaces considered, $\beta+2$ is the dimension.
Given $(M(x))_{x\in V}$ a matrix field distributed according to
\eqref{Eq matrix GFF}, $M_{0}(x)$ will denote
$M(x)-\frac{1}{2}\Tr(M(x)) \mathbf{I}_{2}$, where $\mathbf{I}_{2}$ is the $2\times 2$ identity matrix, so that $\Tr(M_{0}(x))=0$.
Since the hyperplane of zero trace matrices is orthogonal to 
$\mathbf{I}_{2}$ for the inner product
$(A,B)\mapsto \Ren(\Tr(AB))$, we get that
$(M_{0}(x))_{x\in V}$ and
$(\Tr(M(x)))_{x\in V}$ are independent.
Moreover, $(\frac{1}{\sqrt{2}}\Tr(M(x)))_{x\in V}$
is distributed as the scalar GFF \eqref{EqDensityGFF}.
As for $(\Tr(M(x)^{2}))_{x\in V}$, on one hand it is the sum of
$\beta+2$ i.i.d. squares of scalar GFFs \eqref{EqDensityGFF}
corresponding to the entries of the matrices.
On the other hand,
\begin{displaymath}
\Tr(M(x)^{2})=\Tr(M_{0}(x)^{2})+\dfrac{1}{2}\Tr(M(x))^{2}.
\end{displaymath}
So $(\Tr(M_{0}(x)^{2}))_{x\in V}$ is distributed as the sum of
$\beta+1$ i.i.d. squares of scalar GFFs \eqref{EqDensityGFF}.
So in particular, this is the same distributions as for
$(2L^{x}(\calL^{\frac{\beta+1}{2}}_{\G,K}))_{x\in V}$.
Finally, the eigenvalues of $M(x)$ are
\begin{displaymath}
\dfrac{1}{2}\Tr(M(x)) \pm 
\dfrac{1}{\sqrt{2}}\sqrt{\Tr(M_{0}(x)^{2})}.
\end{displaymath}

(6) The eigenvalues of the matrix \eqref{Eq another matrix}
are
\begin{displaymath}
\dfrac{\phi_{1}(x)+\phi_{2}(x)}{2}
\pm
\sqrt{L^{x}(\calL^{\alpha-1}_{\G,K})+(\phi_{2}(x)-\phi_{1}(x))^{2}/4}.
\end{displaymath}
$(\phi_{1}+\phi_{2})/\sqrt{2}$
and 
$(\phi_{2}-\phi_{1})/\sqrt{2}$
are two independent scalar GFFs.
Moreover, 
\begin{displaymath}
L(\calL^{\alpha-1}_{\G,K})+\frac{1}{4}(\phi_{2}-\phi_{1})^{2}
\end{displaymath}
has same distribution as
$L(\calL^{\alpha-\frac{1}{2}}_{\G,K})$. 

(7) The density of the GFF $\phi_{\G,\widetilde{K}}$ with respect to
$\phi_{\G,K}$ is
\begin{displaymath}
\bigg(
\dfrac{\det ( -\Delta_{\G} + \widetilde{K} )}
{\det ( -\Delta_{\G} + K)}
\bigg)^{\frac{1}{2}}
\exp\Big(-\dfrac{1}{2}\sum_{x\in V}
(\widetilde{K}(x)-K(x))\varphi(x)^{2}\Big).
\end{displaymath}
The density of $L(\calL^{\alpha-\frac{1}{2}}_{\G,\widetilde{K}})$
with respect to $L(\calL^{\alpha-\frac{1}{2}}_{\G,K})$ is
\begin{displaymath}
\bigg(
\dfrac{\det ( -\Delta_{\G} + \widetilde{K} )}
{\det ( -\Delta_{\G} + K)}
\bigg)^{\alpha-\frac{1}{2}}
\exp\Big(-\sum_{x\in V}
(\widetilde{K}(x)-K(x))L^{x}(\calL^{\alpha-\frac{1}{2}}_{\G,K})\Big),
\end{displaymath}
as can be seen from the Laplace transform \eqref{Eq Laplace alpha Perm}.
\end{proof}

\subsection{Further questions}

Here we present our questions that motivated this paper.
The first question is combinatorial.
We would like to have the polynomials
$P_{\nu,\beta,n}$ given by
Definition \ref{Def Rec Pol} under a more explicit form.
The recurrence on polynomials \eqref{Eq rec poly}
is closely related to the
Schwinger-Dyson equation \eqref{Eq SD beta}.
Its very form suggests that the polynomials
$P_{\nu,\beta,n}$ might be expressible
as weighted sums over maps drawn on 2D compact
surfaces (not necessarily connected),
where the maps associated to $\nu$ have
$m(\nu)$ vertices with degrees given by
$\nu_{1},\nu_{2},\dots, \nu_{m(\nu)}$,
with powers of $n$ corresponding to the number of faces.
This is indeed the case for $\beta\in\{1,2,4\}$,
and this corresponds to the topological expansion of matrix integrals
\cite{BIPZ78PlanarDiagrams,IZ80PlanarApproximation,MulaseWaldron03GSE,
Lupu19TopoExp}.

\begin{quest}
\label{Q Comb}
Is there a more explicit expression for the polynomials
$P_{\nu,\beta,n}$? Can they be expressed as weighted sums over
the maps on 2D surfaces (topological expansion)?
\end{quest}

The second question is whether there is a natural generalization of
Gaussian beta ensembles and $\beta$-Dyson's Brownian motion to electrical networks. For $n=2$, such a generalization was given in
Section \ref{Sec a construction}.

\begin{quest}
\label{Q Gen}
We are in the setting of an electrical network
$\G=(V,E)$ endowed with a killing measure $K$,
as in Section \ref{Sec a construction}.
Given $n\geq 3$ and $\beta>-\frac{2}{n}$, is there a 
distribution on the fields
$(\lambda(x)
=(\lambda_{1}(x),\lambda_{2}(x),\dots,\lambda_{n}(x)))_{x\in V}$,
with $\lambda_{1}(x)>\lambda_{2}(x)>\dots>\lambda_{n}(x)$,
satisfying the following properties?
\begin{enumerate}
\item For $\beta\in\{1,2,4\}$, $\lambda$ is distributed as the fields of ordered eigenvalues in a GFF with values into 
$n\times n$ matrices, real symmetric $(\beta=1)$,
complex Hermitian $(\beta=2)$,
resp. quaternionic Hermitian $(\beta=4)$.
\item For $\beta=0$, $\lambda$ is obtained by reordering
$n$ i.i.d. scalar GFFs \eqref{EqDensityGFF}.
\item As $\beta\to -\frac{2}{n}$, $\lambda$ converges in law to
\begin{displaymath}
\Big(\dfrac{1}{\sqrt{n}}\phi_{\G,K},\dfrac{1}{\sqrt{n}}\phi_{\G,K},
\dots, \dfrac{1}{\sqrt{n}}\phi_{\G,K}\Big),
\end{displaymath}
where $\phi_{\G,K}$ is a scalar GFF \eqref{EqDensityGFF}.
\item For every $x\in V$,
$\lambda(x)/\sqrt{G_{\G,K}(x,x)}$ is distributed, up to reordering,
as the G$\beta$E \eqref{Eq GbE}.
\item For every $x,y\in V$, the couple
$(\sqrt{2}\lambda(x)/\sqrt{G_{\G,K}(x,x)},
\sqrt{2\eta}\lambda(y)/\sqrt{G_{\G,K}(y,y)})$,
with $\eta$ given by \eqref{Eq eta},
is distributed as the $\beta$-Dyson's Brownian motion
\eqref{Eq b Dyson} at points $1$ and $\eta$.
\item The fields $p_{1}(\lambda)$
and $\lambda-\frac{1}{n}p_{1}(\lambda)$
are independent.
\item The field $\frac{1}{\sqrt{n}}p_{1}(\lambda)$
is distributed as a scalar GFF \eqref{EqDensityGFF}.
\item The field $\frac{1}{2}\big(p_{2}(\lambda)
-\frac{1}{n}p_{1}(\lambda)^{2}\big)$ is
the $\alpha-\frac{1}{2}$-permanental field
with kernel $G_{\G,K}$,
where $\alpha=\frac{1}{2}d(\beta,n)$,
and in particular is distributed as the occupation field
of the continuous-time random walk loop soup
$\calL_{\G,K}^{\alpha-\frac{1}{2}}$.
\item The field 
$\frac{1}{2}p_{2}(\lambda)$ is
the $\alpha$-permanental field
with kernel $G_{\G,K}$,
where $\alpha=\frac{1}{2}d(\beta,n)$,
and in particular is distributed as the occupation field
of the continuous-time random walk loop soup
$\calL_{\G,K}^{\alpha}$
(already implied by (6)+(7)+(8)).
\item The symmetric moments 
\begin{displaymath}
\Big\langle\prod_{k=1}^{m(\nu)}p_{\nu_{k}}(\lambda(x_{k}))
\Big\rangle_{\beta,n}^{\G,K}
\end{displaymath}
are linear combination of products
\begin{displaymath}
\prod_{1\leq k\leq k'\leq m(\nu)}G_{\G,K}(x_{k},x_{k'})^{a_{kk'}},
\end{displaymath}
with $a_{kk'}\in\N$
and for every $k\in\llbracket 1,m(\nu)\rrbracket$,
\begin{displaymath}
2a_{kk}+\sum_{\substack{1\leq k'\leq m(\nu)\\k'\neq k}} a_{kk'} 
=\nu_{k},
\end{displaymath}
the coefficients of the linear combination being
universal polynomials in $\beta$ and $n$,
not depending on the electrical network and its parameters;
see also Question \ref{Q Comb}.
\item Given
$\widetilde{K}\in\R_{+}^{V}$, non-uniformly zero,
and $\tilde{\lambda}=(\tilde{\lambda}_{1},\tilde{\lambda}_{2},
\dots,\tilde{\lambda}_{n})$
the field associated to the killing measure
$\widetilde{K}$ instead of $K$,
the law of $\tilde{\lambda}$ has the following density with respect
to that of $\lambda$:
\begin{displaymath}
\bigg(
\dfrac{\det ( -\Delta_{\G} + \widetilde{K} )}
{\det ( -\Delta_{\G} + K)}
\bigg)^{\frac{1}{2}d(\beta,n)}
\exp\Big(-\dfrac{1}{2}\sum_{x\in V}
(\widetilde{K}(x)-K(x))p_{2}(\lambda(x))\Big).
\end{displaymath}
\item $\lambda$ satisfies a BFS-Dynkin type isomorphism
with continuous time random walks
(already implied by (10)+(11)).
\end{enumerate}
\end{quest}

If the graph $\G$ is a tree, the natural generalization $\lambda$ of the $\beta$-Dyson's Brownian motion is straightforward to construct, 
at least for $\beta\geq 0$. 
In absence of cycles, $\lambda$ satisfies a Markov property,
and along each branch of the tree one has the values of a
$\beta$-Dyson's Brownian motion at different positions.
On the random walk loop soup side, (8) and (9) is ensured by the covariance of the loop soups under the rewiring of graphs;
see \cite[Chapter~7]{LeJan2011Loops}. 
Constructing $\lambda$ on a tree for 
$\beta\in \big(-\frac{2}{n},0\big)$ is a matter of constructing
the corresponding $\beta$-Dyson's Brownian motion.
However, if the graph $\G$ contains cycles,
constructing $\lambda$ is not immediate, and we have not encountered such a construction in the literature.
One does not expect a Markov property, since already for
$\beta\in\{1,2,4\}$ one has to take into account the angular part of the matrices.

\newpage

\section*{Appendix: A list of moments for G$\beta$E and the corresponding formal polynomials}

\begin{eqnarray*}
\langle p_{1}(\lambda)^{2}\rangle_{\beta,n}
&=& n,
\\
P_{(1,1)}
&=& n \Y_{11} \cY_{12},
\end{eqnarray*}
\begin{eqnarray*}
\langle p_{2}(\lambda)\rangle_{\beta,n}
&=&\dfrac{\beta}{2} n^{2} + \Big(1-\dfrac{\beta}{2}\Big) n
=d(\beta,n),
\\
P_{(2)}
&=&\Big(\dfrac{\beta}{2} n^{2} +
 \Big(1-\dfrac{\beta}{2}\Big)n\Big) \Y_{11}
 = d(\beta,n)\Y_{11},
\end{eqnarray*}
\begin{eqnarray*}
\langle p_{1}(\lambda)^{4}\rangle_{\beta,n}&=& 3n^{2},
\\
P_{(1,1,1,1)}&=&
n^{2}\Y_{11}\cY_{12}\Y_{33}\cY_{34}+
2n^{2}\Y_{11}\cY_{12}\Y_{22}\cY_{23}^{2}\cY_{34},
\end{eqnarray*}
\begin{eqnarray*}
\langle p_{2}(\lambda)p_{1}(\lambda)^{2}\rangle_{\beta,n}&=&
\dfrac{\beta}{2} n^{3}
+\Big(1-\dfrac{\beta}{2}\Big) n^{2}
+ 2n,
\\
P_{(2,1,1)}&=&
\Big(\dfrac{\beta}{2} n^{3} + \Big(1-\dfrac{\beta}{2}\Big) n^{2}\Big)
\Y_{11}\Y_{22}\cY_{23}
+ 2n \Y_{11}^{2}\cY_{12}^{2}\cY_{23},
\\
P_{(1,2,1)}&=&
\Big(\dfrac{\beta}{2} n^{3} + \Big(1-\dfrac{\beta}{2}\Big) n^{2}
+2n\Big)
\Y_{11}\cY_{12}\Y_{22}\cY_{23},
\\
P_{(1,1,2)}&=&
\Big(\dfrac{\beta}{2} n^{3} + \Big(1-\dfrac{\beta}{2}\Big) n^{2}\Big)
\Y_{11}\cY_{12}\Y_{33}
+ 2n \Y_{11}\cY_{12}\Y_{22}\cY_{23}^{2},
\end{eqnarray*}
\begin{eqnarray*}
\langle p_{2}(\lambda)^{2}\rangle_{\beta,n}&=&
\dfrac{\beta^{2}}{4} n^{4}
+
2\dfrac{\beta}{2}\Big(1-\dfrac{\beta}{2}\Big)
 n^{3}
\\ && +
\Big(\Big(1-\dfrac{\beta}{2}\Big)^{2}
+2\dfrac{\beta}{2}\Big) n^{2}
+2\Big(1-\dfrac{\beta}{2}\Big) n
\\
&=& d(\beta,n)(d(\beta,n) +2),
\\
P_{(2,2)}&=&
\Big(
\dfrac{\beta^{2}}{4} n^{4}
+2\dfrac{\beta}{2}\Big(1-\dfrac{\beta}{2}\Big) n^{3}
+\Big(1-\dfrac{\beta}{2}\Big)^{2} n^{2}
\Big) \Y_{11}\Y_{22}
\\ && +
\Big(
2\dfrac{\beta}{2} n^{2}
+ 2\Big(1-\dfrac{\beta}{2}\Big) n
\Big) \Y_{11}^{2}\cY_{12}^{2},
\end{eqnarray*}
\begin{eqnarray*}
\langle p_{3}(\lambda)p_{1}(\lambda)\rangle_{\beta,n}&=&
3\dfrac{\beta}{2} n^{2}
+
3\Big(1-\dfrac{\beta}{2}\Big) n,
\\
P_{(3,1)} &=&
\Big(3\dfrac{\beta}{2} n^{2}
+
3\Big(1-\dfrac{\beta}{2}\Big) n 
\Big)
\Y_{11}^{2}\cY_{12},
\\
P_{(1,3)} &=&
\Big(3\dfrac{\beta}{2} n^{2}
+
3\Big(1-\dfrac{\beta}{2}\Big) n 
\Big)
\Y_{11}\cY_{12}\Y_{22},
\end{eqnarray*}
\begin{eqnarray*}
\langle p_{4}(\lambda)\rangle_{\beta,n}&=&
2\dfrac{\beta^{2}}{4} n^{3}
+
5 \dfrac{\beta}{2}\Big(1-\dfrac{\beta}{2}\Big)
n^{2}
+
\Big(
\dfrac{\beta}{2}+
3\Big(1-\dfrac{\beta}{2}\Big)^{2}
\Big)n,
\\
P_{(4)}&=&
\Big(
2\dfrac{\beta^{2}}{4} n^{3}
+ 
5 \dfrac{\beta}{2}\Big(1-\dfrac{\beta}{2}\Big)
n^{2}
+\Big(\dfrac{\beta}{2} 
+3\Big(1-\dfrac{\beta}{2}\Big)^{2}\Big)n
\Big) \Y_{11}^{2},
\end{eqnarray*}
\begin{eqnarray*}
\langle p_{3}(\lambda)^{2}\rangle_{\beta,n}&=&
12 \dfrac{\beta^{2}}{4}n^{3}
+
27 \dfrac{\beta}{2}\Big(1-\dfrac{\beta}{2}\Big)n^{2}
+
\Big(
3\dfrac{\beta}{2} +
15\Big(1-\dfrac{\beta}{2}\Big)^{2}
\Big) n,
\\
P_{(3,3)}&=&
9\Big(
\dfrac{\beta^{2}}{4}n^{3}
+2\dfrac{\beta}{2}\Big(1-\dfrac{\beta}{2}\Big)n^{2}
+\Big(1-\dfrac{\beta}{2}\Big)^{2}n
\Big)
\Y_{11}^{2}\cY_{12}\Y_{22}
\\
&&+
3\Big(
\dfrac{\beta^{2}}{4}n^{3}
+3\dfrac{\beta}{2}\Big(1-\dfrac{\beta}{2}\Big)n^{2}
+
\Big(
\dfrac{\beta}{2} +
2\Big(1-\dfrac{\beta}{2}\Big)^{2}
\Big) n
\Big)\Y_{11}^{3}\cY_{12}^{3}.
\end{eqnarray*}

\section*{Acknowledgements}

The author thanks Guillaume Chapuy and Jérémie Bouttier for
discussions and references on the beta ensembles.
The author thanks Yves Le Jan and Wendelin Werner for their feedback on the preliminary version of the article.

This work was supported by the French National Research Agency (ANR) grant within the project MALIN (ANR-16-CE93-0003).

\bibliographystyle{acm}
\bibliography{titusbibnew}

\end{document}